\title{Measure equivalence classification of transvection-free right-angled Artin groups}
\author{Camille Horbez and Jingyin Huang}
\begin{document}
\maketitle 
\newtheorem{de}{Definition} [section]
\newtheorem{theo}[de]{Theorem} 
\newtheorem{prop}[de]{Proposition}
\newtheorem{lemma}[de]{Lemma}
\newtheorem{cor}[de]{Corollary}
\newtheorem{propd}[de]{Proposition-Definition}
\newtheorem{conj}[de]{Conjecture}
\newtheorem{claim}{Claim}
\newtheorem*{claim2}{Claim}

\newtheorem{theointro}{Theorem}
\newtheorem*{defintro}{Definition}
\newtheorem{corintro}[theointro]{Corollary}
\newtheorem{questionintro}[theointro]{Question}
\newtheorem{propintro}[theointro]{Proposition}

\theoremstyle{remark}
\newtheorem{rk}[de]{Remark}
\newtheorem{ex}[de]{Example}
\newtheorem{question}[de]{Question}

\normalsize

\newcommand{\Aut}{\mathrm{Aut}}
\newcommand{\Out}{\mathrm{Out}}
\newcommand{\Inn}{\mathrm{Inn}}
\newcommand{\stab}{\operatorname{Stab}}
\newcommand{\pstab}{\operatorname{Pstab}}
\newcommand{\dunion}{\sqcup}
\newcommand{\eps}{\varepsilon}
\renewcommand{\epsilon}{\varepsilon}
\newcommand{\calf}{\mathcal{F}}
\newcommand{\cali}{\mathcal{I}}
\newcommand{\caly}{\mathcal{Y}}
\newcommand{\calx}{\mathcal{X}}
\newcommand{\calz}{\mathcal{Z}}
\newcommand{\calo}{\mathcal{O}}
\newcommand{\calb}{\mathcal{B}}
\newcommand{\calq}{\mathcal{Q}}
\newcommand{\calu}{\mathcal{U}}
\newcommand{\call}{\mathcal{L}}
\newcommand{\bbR}{\mathbb{R}}
\newcommand{\bbZ}{\mathbb{Z}}
\newcommand{\bbD}{\mathbb{D}}
\newcommand{\NT}{\mathrm{NT}}
\newcommand{\cat}{\mathrm{CAT}(-1)}
\newcommand{\CAT}{\mathrm{CAT}(0)}
\newcommand{\actson}{\curvearrowright}
\newcommand{\caln}{\mathcal{N}}
\newcommand{\calg}{\mathcal{G}}
\newcommand{\Prob}{\mathrm{Prob}}
\newcommand{\calt}{\mathcal{T}}
\newcommand{\calc}{\mathcal{C}}
\newcommand{\adm}{\mathrm{adm}}
\newcommand{\cala}{\mathcal{A}}
\newcommand{\cals}{\mathcal{S}}
\newcommand{\calh}{\mathcal{H}}
\newcommand{\Stab}{\mathrm{Stab}}
\newcommand{\bdd}{\mathrm{bdd}}
\newcommand{\calp}{\mathcal{P}}
\newcommand{\Fix}{\mathrm{Fix}}
\newcommand{\todo}{\textbf{TO DO:~}}
\newcommand{\cald}{\mathcal{D}}
\newcommand{\rad}{r}
\newcommand{\good}{\mathrm{good}}
\newcommand{\Faces}{\mathrm{Faces}}
\newcommand{\del}{\mathbb{D}_\Gamma}
\newcommand{\Mod}{\mathrm{Mod}}
\newcommand{\Comm}{\mathrm{Comm}}
\newcommand{\si}{\sigma}
\newcommand{\st}{\mathrm{st}}
\newcommand{\lk}{\mathrm{lk}}
\newcommand{\supp}{\mathrm{supp}}
\newcommand{\HHS}{\mathrm{HHS}}
\newcommand{\reg}{\mathrm{reg}}
\newcommand{\calk}{\mathcal{K}}

\makeatletter
\edef\@tempa#1#2{\def#1{\mathaccent\string"\noexpand\accentclass@#2 }}
\@tempa\rond{017}
\makeatother

\begin{abstract}
We prove that if two transvection-free right-angled Artin groups are measure equivalent, then they have isomorphic extension graphs. As a consequence, two right-angled Artin groups with finite outer automorphism groups are measure equivalent if and only if they are isomorphic. This matches the quasi-isometry classification.

However, in contrast with the quasi-isometry question, we observe that no right-angled Artin group is superrigid for measure equivalence in the strongest possible sense, for two reasons. First, a right-angled Artin group $G$ is always measure equivalent to any graph product of infinite countable amenable groups over the same defining graph. Second, when $G$ is nonabelian, the automorphism group of the universal cover of the Salvetti complex of $G$ always contains infinitely generated (non-uniform) lattices.
\end{abstract}

\section*{Introduction}

Measured group theory studies groups through their ergodic actions on standard probability spaces.  
A central quest is to classify groups up to \emph{measure equivalence}, a notion introduced by Gromov \cite{Gro} as a measure-theoretic analogue of that of quasi-isometry between finitely generated groups.

The definition is as follows: two countable groups $G_1$ and $G_2$ are \emph{measure equivalent} if there exists a standard measure space $\Sigma$ equipped with a measure-preserving action of $G_1\times G_2$ by Borel automorphisms, such that for every $i\in\{1,2\}$, the $G_i$-action on $\Sigma$ is essentially free and has a finite measure fundamental domain. A typical example is that two lattices in the same locally compact second countable group are always measure equivalent.

A first striking result in the theory, due to Ornstein and Weiss \cite{OW} (building on previous work of Dye \cite{Dye1,Dye2}) is that any two countably infinite amenable groups are measure equivalent. At the other extreme, strong rigidity results have been established from the viewpoint of measure equivalence, notably for lattices in higher rank Lie groups (Furman \cite{Fur}, building on earlier work of Zimmer \cite{Zim1,Zim2}) and for mapping class groups of finite-type surfaces (Kida \cite{Kid}). In \cite{MS}, Monod and Shalom used bounded cohomology techniques to obtain rigidity results for products of certain negatively curved groups. Other developments include superrigidity results for certain amalgamated free products \cite{Kid2}, for other groups related to mapping class groups \cite{CK}, or classification results within certain classes of groups, like Baumslag--Solitar groups \cite{kida2014invariants,HR}. See also \cite{Sha-survey,Gab-survey,Fur-survey} for general surveys in measured group theory.

In previous work \cite{HH}, we obtained measure equivalence superrigidity results for certain classes of two-dimensional Artin groups of hyperbolic type. In the present paper, we focus on right-angled Artin groups, which have a very simple definition and played a prominent role in geometric group theory recently.
We obtain a complete measure equivalence classification of right-angled Artin groups with finite outer automorphism group.  One motivation for studying these groups from the viewpoint of measured group theory is that they usually tend to be significantly less rigid than certain other classes of Artin groups, as will be evidenced in Corollary~\ref{corintro:graph-products} and Theorem~\ref{theointro-non-superrigid} below. With this in mind, one hopes to explore the subtle line between rigidity and flexibility phenomena in this setting. 

\paragraph*{Main classification theorem.} We recall that given a finite simple graph $\Gamma$ (i.e.\ with no edge loops and no multiple edges between vertices), the right-angled Artin group $G_\Gamma$ is the group defined by the following presentation: the generators are the vertices of $\Gamma$, and two generators commute if and only if the corresponding vertices of $\Gamma$ are joined by an edge. Our main theorem is the following.

\begin{theointro}\label{theointro:1}
Let $G_1$ and $G_2$ be two right-angled Artin groups such that $\Out(G_1)$ and $\Out(G_2)$ are finite. Then $G_1$ and $G_2$ are measure equivalent if and only if they are isomorphic. 
\end{theointro}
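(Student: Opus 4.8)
The converse implication is immediate, since isomorphic groups are measure equivalent; so the content lies in proving that measure equivalence forces isomorphism. Write $G_1 = G_{\Gamma_1}$ and $G_2 = G_{\Gamma_2}$, and assume they are measure equivalent.

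My first step is to convert the hypothesis that $\Out(G_i)$ is finite into a combinatorial condition on $\Gamma_i$. By the Laurence--Servatius description of $\Aut(G_\Gamma)$, the outer automorphism group is generated by graph automorphisms and inversions --- which together form a finite group --- together with transvections and partial conjugations. A transvection exists precisely when $\Gamma$ admits a \emph{domination}, i.e.\ distinct vertices $v,w$ with $\lk(v)\subseteq\st(w)$, and a nontrivial partial conjugation exists precisely when some star separates $\Gamma$; both produce infinite-order elements of $\Out$. Hence finiteness of $\Out(G_i)$ forces $\Gamma_i$ to be both transvection-free and free of separating stars. In particular both $G_i$ are transvection-free, which is exactly the hypothesis needed to apply the main theorem.

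Second, I invoke that theorem: since $G_1$ and $G_2$ are measure equivalent transvection-free right-angled Artin groups, their extension graphs are isomorphic, $\Gamma_1^e\cong\Gamma_2^e$, as abstract graphs. At this point all measure-theoretic information has been distilled into a single graph isomorphism, and the remaining task is purely combinatorial: to show that, under the finite-$\Out$ hypothesis, the isomorphism type of $\Gamma^e$ determines that of $\Gamma$, so that $\Gamma_1^e\cong\Gamma_2^e$ yields $\Gamma_1\cong\Gamma_2$ and therefore $G_1\cong G_2$.

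This reconstruction is the step I expect to be the main obstacle, since in general $\Gamma^e$ does \emph{not} remember $\Gamma$, so finiteness of $\Out$ must be used essentially. The vertices of $\Gamma^e$ are the conjugates of the standard generators, each carrying a well-defined \emph{type} (the generator to which it is conjugate), and I would first record that $\Gamma$ is literally the quotient of $\Gamma^e$ by the type relation: using the standard fact that conjugates of non-adjacent generators never commute, the adjacency of this quotient coincides with that of $\Gamma$. The genuine difficulty is that an abstract isomorphism $\Gamma_1^e\cong\Gamma_2^e$ need not preserve types, so I must prove that the type partition is an invariant of the bare isomorphism class of $\Gamma^e$. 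I would recover it from local data --- maximal cliques (corresponding to maximal abelian special subgroups, hence to cliques of $\Gamma$) and links of vertices --- and here the two halves of the finite-$\Out$ hypothesis play distinct roles: the absence of domination prevents one type's link from sitting inside another type's star, which is what lets distinct types be told apart combinatorially, while the absence of a separating star rules out the partial-conjugation ambiguities that otherwise allow non-isomorphic graphs to share an extension graph, i.e.\ it rigidifies $\Gamma$ within its extension-graph class. Verifying rigorously that these local invariants pin down the type partition --- equivalently, that no two non-isomorphic finite-$\Out$ graphs have isomorphic extension graphs --- is the delicate technical core; granting it, one obtains $\Gamma_1\cong\Gamma_2$ and hence $G_1\cong G_2$.
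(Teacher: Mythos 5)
Your proposal follows the paper's proof exactly: finiteness of $\Out$ is translated into transvection-freeness (plus no separating star), Theorem~\ref{theointro:main} is applied to get an isomorphism $\Gamma_1^e\cong\Gamma_2^e$, and the remaining step is the purely combinatorial rigidity statement that, for defining graphs with finite $\Out(G_\Gamma)$, the extension graph determines $\Gamma$ and hence $G_\Gamma$ by Droms's theorem. The ``delicate technical core'' you leave unverified is precisely Theorem~\ref{theo:rigidity}, which the paper does not reprove but imports from \cite{Hua}; modulo that citation your argument is complete, and your heuristic for where the two halves of the finite-$\Out$ hypothesis enter is consistent with how that rigidity result is established there.
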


Finiteness of $\Out(G_\Gamma)$ can easily be checked on the defining graph $\Gamma$; by work of Laurence \cite{laurence1995generating} and Servatius \cite{Ser}, this amounts to the two conditions below. For a vertex $v\in\Gamma$, the star of $v$, denoted by $\st(v)$, is the subgraph made of all vertices which are either equal or adjacent to $v$, together with edges of $\Gamma$ between these vertices.
\begin{enumerate}
\item $\Gamma$ does not contain two distinct vertices $v$ and $w$ such that the link of $v$ is contained in the star of $w$ -- this prevents the existence of transvections $v\mapsto vw$ in $\Aut(G_\Gamma)$ (we then say that $G_\Gamma$ is \emph{transvection-free});
\item $\Gamma$ contains no separating star, i.e. there does not exist any vertex $v\in\Gamma$ such that $\Gamma\setminus \st(v)$ is disconnected. This prevents the existence of partial conjugations.
\end{enumerate}

Every right-angled Artin group has an associated \emph{extension graph}, introduced by Kim and Koberda in \cite{kim2013embedability} as the graph whose vertices are the rank one parabolic subgroups of $G_\Gamma$ (i.e.\ conjugates of the cyclic subgroups generated by the standard generators of $G_\Gamma$), two of these being joined by an edge exactly when they commute. This graph can be viewed as an analogue of the curve graph in the context of right-angled Artin groups \cite{kim2014geometry}. Theorem~\ref{theointro:1} is in fact obtained as a consequence of the following theorem, using the additional fact that two right-angled Artin groups with finite outer automorphism groups have isomorphic extension graphs if and only if they are isomorphic \cite{Hua}. 

\begin{theointro}\label{theointro:main}
Let $G_1$ and $G_2$ be two transvection-free right-angled Artin groups. If $G_1$ and $G_2$ are measure equivalent, then they have isomorphic extension graphs.
\end{theointro}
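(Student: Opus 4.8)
The plan is to follow the general strategy for deducing measure equivalence rigidity from a canonical geometric structure, as developed for mapping class groups by Kida \cite{Kid} and adapted to cube-complex geometry in our earlier work on two-dimensional Artin groups \cite{HH}. Let $\Sigma$ be a measure equivalence coupling between $G_1=G_{\Gamma_1}$ and $G_2=G_{\Gamma_2}$. As usual I would first encode this coupling as a measured groupoid $\mathcal{G}$ over a standard probability space, carrying two commuting realizations by $G_1$ and $G_2$ (equivalently, a measurable cocycle between the two groups together with its attendant equivariant structure). The goal is then to show that the combinatorial data recorded by the extension graph is preserved by $\mathcal{G}$, and finally to upgrade the resulting measurable identification into a genuine graph isomorphism $\Gamma_1^e\cong\Gamma_2^e$.

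The geometric input I would exploit is the action of each $G_{\Gamma_i}$ on the universal cover of its Salvetti complex, a $\CAT$ cube complex, together with the associated hierarchically hyperbolic structure. In this picture the vertices of the extension graph -- the rank one parabolic subgroups, i.e.\ the conjugates of $\langle v\rangle$ for $v$ a vertex of $\Gamma_i$ -- correspond to a canonical family of standard directions, namely maximal cyclic subgroups arising from combinatorial geodesics. The first substantial step is to give an intrinsic characterization of these parabolics that survives the passage through the coupling: in a transvection-free right-angled Artin group each such $\langle v\rangle$ is a maximal cyclic subgroup whose centralizer splits as $\langle v\rangle\times G_{\lk(v)}$, so that its commensurator and centralizer encode its commutation data. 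Using a classification of amenable subgroupoids along the coupling -- via stationary measures on the relevant boundary, in the spirit of Kida's treatment of curve stabilizers \cite{Kid} -- I would argue that the groupoid $\mathcal{G}$ carries the system of rank one parabolics of $G_1$ measurably and essentially bijectively onto that of $G_2$.

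Next I would promote this to adjacency. Two rank one parabolics are joined by an edge of $\Gamma_i^e$ exactly when they commute, equivalently when they generate a rank two abelian (hence again parabolic, $\mathbb{Z}^2$) subgroup; since this joint abelianity is again an intrinsic, coupling-invariant property, the measurable bijection on vertices preserves and reflects adjacency. At this stage one obtains a measurable, $\mathcal{G}$-equivariant bijection between the vertex sets of $\Gamma_1^e$ and $\Gamma_2^e$ respecting edges -- a cocycle version of the desired conclusion.

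The main obstacle, and the heart of the argument, is the untwisting step that converts this measurable equivariant identification into an honest isomorphism of extension graphs. The strategy is to show that the associated measure equivalence cocycle, valued in the automorphism group of $\Gamma_2^e$, is cohomologous to a homomorphism, so that it descends to a single graph isomorphism independent of the base point. Here I expect to use the strong combinatorial rigidity of extension graphs, i.e.\ the fact that adjacency-preserving bijections are highly constrained \cite{kim2014geometry,Hua}, together with the transvection-free hypothesis, which rules out the shearing phenomena that would otherwise obstruct rigidity, to force the cocycle to be essentially constant. Carrying out this untwisting -- reconciling the measure-theoretic flexibility of the coupling with the rigidity of the graph -- is the step I anticipate requiring the most care, and it is where the transvection-free assumption does its essential work.
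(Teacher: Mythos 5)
Your overall architecture (encode the coupling as a measured groupoid, characterize the rank one parabolic stabilizers intrinsically via amenability, transfer adjacency, conclude) matches the paper's, but three of your steps contain genuine gaps. First, the characterization of vertex stabilizers: the property you propose -- maximal cyclic subgroup whose centralizer splits as $\langle v\rangle\times G_{\lk(v)}$, detected via an Adams-style analysis of amenable subgroupoids -- is not sufficient. If $\Gamma$ contains a join of two non-clique irreducible factors, then a maximal parabolic $P_1\times P_2$ with trivial center contains subgroups of the form $\langle g\rangle\times P_2$ with $g$ generic in $P_1$, and these satisfy exactly the same ``nonamenable, normalizes an infinite amenable subgroup(oid), maximal for this'' property as genuine centralizers of cyclic parabolics. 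The paper's Proposition~\ref{prop:raag} needs a second, separate condition (Assertion~2.(b), proved via Lemma~\ref{lemma:free} and Corollary~\ref{cor:free-2} on cocycles to free products) precisely to exclude these; your sketch does not address this and would fail on such graphs. Second, your adjacency step does not work as stated: ``joint abelianity'' of two rank one parabolics is not a property the groupoid can see. On the $G_2$-side you only have access to the two vertex \emph{subgroupoids} (i.e.\ the preimages of the centralizers $Z\times Z^\perp$); there is no groupoid-theoretic object corresponding to the subgroup $\langle Z,Z'\rangle$ of $G_1$, so you cannot test whether it is $\mathbb{Z}^2$. The paper instead characterizes adjacency by counting the vertices of $\Gamma^e$ virtually fixed by $\Stab(v)\cap\Stab(w)$: finitely many (at most $|V\Gamma|$) if $v,w$ are adjacent, infinitely many otherwise because $\langle Z_v,Z_w\rangle$ then contains a free group centralized by the intersection (Lemma~\ref{lem:fix point}, via Kim--Koberda). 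This is expressible purely in terms of containments among vertex subgroupoids (Lemma~\ref{lemma:adjacency-groupoids}), which is what makes it transferable.

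Third, the step you single out as the heart of the argument -- untwisting a cocycle valued in $\Aut(\Gamma_2^e)$ to a homomorphism -- is not needed for this statement and is a misdirection. The conclusion is only that $\Gamma_1^e\cong\Gamma_2^e$ as abstract graphs, so it suffices to produce, for a single (almost every) point $y$ of the base, a map $\theta(\cdot,y):V\Gamma_1^e\to V\Gamma_2^e$ and check it is bijective and preserves (non)adjacency; this is exactly how the paper concludes Theorem~\ref{theo:main-2}, with injectivity and surjectivity following from the uniqueness statement (Lemma~\ref{lemma:uniqueness}) and countability of the vertex sets. No cohomological untwisting is performed, and indeed the paper's Section~4 shows right-angled Artin groups are never superrigid, so one should not expect the coupling itself to rigidify. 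Relatedly, the transvection-free hypothesis does its essential work not in any untwisting but in the vertex characterization and uniqueness steps (via Lemma~\ref{lemma:transvection-free}, which guarantees $Z^\perp$ is nonabelian and separates distinct cyclic parabolics).
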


In fact, in Theorem~\ref{theointro:1}, if we only assume that $\Out(G_1)$ is finite and that $G_2$ is transvection-free, then using \cite[Theorem~1.2]{Hua}, we deduce that $G_1$ and $G_2$ are measure equivalent if and only if they have isomorphic extension graphs, and in this case this happens if and only if $G_2$ is a finite-index subgroup of $G_1$. 

\medskip

 To our knowledge, right-angled Artin groups have not been systematically studied from the point of view of measured group theory before; however, a few things were already known concerning their measure equivalence classification. As already mentioned, all countably infinite free abelian groups (and more generally amenable groups) are measure equivalent \cite{Dye2,OW} (and not measure equivalent to nonamenable groups, see e.g.\ \cite[Corollary~3.2]{Fur-survey}). The behavior of measure equivalence under free products was thoroughly studied by Alvarez and Gaboriau in \cite{AG}. The invariance of the canonical decomposition of a right-angled Artin group as a direct product (corresponding to the maximal decomposition of $\Gamma$ as a join) can be proved by combining work of Monod and Shalom \cite[Theorems~1.16 and~1.17]{MS} together with a theorem of Chatterji, Fern\'os and Iozzi \cite[Corollary~1.8]{CFI} stating that  non-cyclic right-angled Artin groups that do not split as direct products belong to the class $\mathcal{C}_{\mathrm{reg}}$. Also, Gaboriau proved that $\ell^2$-Betti numbers can be used to obtain measure equivalence invariants \cite{Gab-betti}; they have been computed by Davis and Leary for many Artin groups including all the right-angled ones \cite{DL}. Finally, we comment that Theorem~\ref{theointro:1} was proved in our previous work on Artin groups under the extra assumption that the underlying graphs of $G_1$ and $G_2$ have girth at least $5$ \cite[Corollary~10.3]{HH}. However, our proof in \cite{HH} relies on fact that, under this extra assumption, $G_1$ and $G_2$ are special instances of $2$-dimensional Artin groups of hyperbolic type. Our methods in \cite{HH} are specific to this class of Artin groups, and therefore cannot be used to prove Theorem~\ref{theointro:1} in general.  

\paragraph*{Orbit equivalence and $W^*$-equivalence.} Our results can be reinterpreted in the language of orbit equivalence: indeed, by a result of Furman \cite{Fur2} (with an additional argument by Gaboriau \cite[Theorem~2.3]{Gab2}), two groups are measure equivalent if and only if they admit stably orbit equivalent essentially free measure-preserving ergodic actions on standard probability spaces. As will be explained with more details later in this introduction, the way we prove Theorem~\ref{theointro:main} is actually through this viewpoint -- see Theorem~\ref{theo:main-2} for a precise statement phrased in terms of measured groupoids, also including the case of non-free actions.

As right-angled Artin groups are cubical, our results can in fact also be phrased using the weaker notion of (stable) $W^*$-equivalence (we refer to \cite[Section~6.2]{PV} for a detailed discussion of the notion of stable $W^*$-equivalence). 

\begin{corintro}\label{corintro:von-neumann}
Let $G_1$ and $G_2$ be two right-angled Artin groups with finite outer automorphism groups. Assume that $G_1$ and $G_2$ have essentially free ergodic probability measure-preserving actions on standard probability spaces which are stably $W^*$-equivalent (i.e.\ their associated von Neumann algebras are isomorphic, or more generally one is isomorphic to an amplification of the other). 

Then $G_1$ and $G_2$ are isomorphic.
\end{corintro}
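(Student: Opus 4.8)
The plan is to reduce the hypothesis of stable $W^*$-equivalence to measure equivalence, and then invoke Theorem~\ref{theointro:1}. For $i\in\{1,2\}$, I would write $M_i=L^\infty(X_i)\rtimes G_i$ for the von Neumann algebra of the action $G_i\actson X_i$, with its associated Cartan subalgebra $A_i=L^\infty(X_i)\subseteq M_i$. The assumption of stable $W^*$-equivalence provides an isomorphism $\theta\colon M_1^t\to M_2$ for some amplification parameter $t>0$.

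The crucial input will be a unique Cartan subalgebra theorem. Right-angled Artin groups are cubical, acting properly and cocompactly on the $\CAT$ cube complex obtained as the universal cover of the Salvetti complex, so the crossed products $M_i$ fall within the scope of \cite[Theorem~1.2]{PV} (alternatively \cite[Corollary~4.2]{HHL}); I would invoke this to conclude that $A_i$ is the unique Cartan subalgebra of $M_i$ up to unitary conjugacy. After composing $\theta$ with an inner automorphism of $M_2$, one may therefore assume that it carries $A_1^t$ onto $A_2$. By the standard dictionary between Cartan-preserving isomorphisms of crossed product von Neumann algebras and (stable) orbit equivalences of the underlying actions (Singer, Feldman--Moore), it then follows that the actions $G_1\actson X_1$ and $G_2\actson X_2$ are stably orbit equivalent.

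It then remains to feed this into the measure-theoretic chain already recalled in the introduction. By Furman \cite{Fur2} together with Gaboriau \cite[Theorem~2.3]{Gab2}, stable orbit equivalence of essentially free ergodic probability measure-preserving actions entails measure equivalence of the acting groups; hence $G_1$ and $G_2$ are measure equivalent. Since $\Out(G_1)$ and $\Out(G_2)$ are finite, Theorem~\ref{theointro:1} yields that $G_1$ and $G_2$ are isomorphic.

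I expect the only genuinely non-formal step to be checking that the arbitrary right-angled Artin groups under consideration satisfy the hypotheses of the unique Cartan subalgebra theorems of \cite{PV} or \cite{HHL}; once the relevant cubical (or properness/acylindricity) condition is verified to match those hypotheses, the remaining implications are routine translations between the $W^*$, orbit equivalence, and measure equivalence frameworks.
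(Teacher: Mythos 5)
Your proposal is correct and is exactly the argument the paper intends: the corollary is stated with a \qed precisely because it follows by combining the unique Cartan subalgebra theorems of \cite{PV} or \cite{HHL} (turning stable $W^*$-equivalence into stable orbit equivalence, hence measure equivalence via \cite{Fur2} and \cite[Theorem~2.3]{Gab2}) with Theorem~\ref{theointro:1}. No further comment is needed.
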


The version for $W^*$-equivalence (i.e.\ assuming that the two von Neumann algebras are isomorphic, without passing to amplifications) can be seen directly as a consequence of \cite[Corollary~4.2]{HHL}; we will explain in Section~\ref{sec:von-neumann} how the full statement follows from work of Popa and Vaes \cite{PV1}.
\\
\\
\textit{Remark.} In Corollary~\ref{corintro:von-neumann}, one cannot expect to reach the stronger conclusion that the given actions of $G_1$ and $G_2$ are conjugate. See indeed Remark~\ref{rk:oe-non-conj} for a construction of free, ergodic, probability measure-preserving actions that are orbit equivalent but not conjugate.

\paragraph*{Comparison with quasi-isometry, and failure of superrigidity.} Incidently, the classification results given by Theorems~\ref{theointro:1} and~\ref{theointro:main} both match the quasi-isometry classification obtained by the second named author in \cite{Hua}, generalizing earlier work of Bestvina, Kleiner and Sageev \cite{BKS}. A lot of work has revolved around the quasi-isometry classification problem for right-angled Artin groups, starting from work of Behrstock and Neumann \cite{behrstock2008quasi} for right-angled Artin groups whose defining graph is a tree, with a higher dimensional generalization in \cite{behrstock2010quasi}, and followed more recently by \cite{Hua4,Mar}, for instance.

However, apart from the analogy with mapping class groups and Artin groups suggesting that having a finite outer automorphism group is often the source of further rigidity phenomena, there was \emph{a priori} no reason to expect that the measure equivalence and quasi-isometry classification should match. By contrast, they already fail to match on the most basic class of right-angled Artin groups (with infinite outer automorphism group), namely the free abelian groups.

In fact, this fundamental difference on the most basic examples is the source of an extremely different situation between the quasi-isometry and measure equivalence classifications of right-angled Artin groups. This is evidenced by our next proposition (and its corollary), that we prove by adapting an argument of Gaboriau \cite{Gab-cost,Gab}, who dealt with the case of free products. 

We now recall the notion of graph products introduced by Green in \cite{green1990graph}. Given a finite simple graph $\Gamma$ and an assignment of a group $G_v$ to every vertex $v$ of $\Gamma$, the \emph{graph product} over $\Gamma$ with vertex groups $\{G_v\}_{v\in V\Gamma}$ is the group obtained from the free product of the groups $G_v$ by adding as only extra relations that every element of $G_v$ commutes with every element of $G_w$ whenever $v$ and $w$ are adjacent in $\Gamma$.

\begin{propintro}\label{propintro:graph-products}
Let $G$ and $H$ be two groups that split as graph products over the same finite simple graph $\Gamma$, with countable vertex groups $\{G_v\}_{v\in V\Gamma}$ and $\{H_v\}_{v\in V\Gamma}$, respectively. Assume that for every vertex $v\in V\Gamma$, the groups $G_v$ and $H_v$ admit orbit equivalent free measure-preserving actions on standard probability spaces.

Then $G$ and $H$ admit orbit equivalent free measure-preserving actions on standard probability spaces; in particular they are measure equivalent.
\end{propintro}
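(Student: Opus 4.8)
The plan is to build a single orbit equivalence directly, by assembling the orbit equivalences coming from the vertex groups according to the combinatorial structure of the graph product. The starting point is the standard normal-form description of a graph product: every nontrivial element of $G$ has a reduced expression as a word in syllables drawn from the vertex groups $G_v$, unique up to shuffling syllables whose vertices are adjacent in $\Gamma$; the analogous statement holds for $H$. The key conceptual input is that orbit equivalence between two free probability-measure-preserving actions is equivalent to an isomorphism of the associated measured equivalence relations (or, groupoids). So I would recast the hypothesis as: for each $v$, there is a measure space $X_v$ with a free action of $G_v$ and a free action of $H_v$ generating the \emph{same} orbit equivalence relation $\mathcal{R}_v$ on $X_v$.

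First I would form the product probability space $X=\prod_{v\in V\Gamma} X_v$ and show that both $G$ and $H$ act on it. The natural way to do this is to let $G$ act by its graph-product structure: a syllable in $G_v$ acts on the $X_v$-coordinate via the given $G_v$-action and fixes the other coordinates; the graph-product relations (commutation across edges of $\Gamma$) are automatically respected because actions on distinct coordinates commute, which is exactly why the graph product, rather than just the free product, acts. The same construction gives an $H$-action on $X$. The heart of the argument is then to verify that these two actions generate the same orbit equivalence relation on $X$. This should follow because for each coordinate $v$ the $G_v$- and $H_v$-orbits coincide (they generate $\mathcal{R}_v$), and the orbit of a point under the whole graph product is generated by moving finitely many coordinates at a time through the vertex-group orbits; since the generating moves agree coordinate by coordinate, the full orbits agree. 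The only subtlety is to confirm that the product action of the graph product really is free and has the right orbits, i.e.\ that applying a reduced word visibly changes the tuple in a way matching $\mathcal{R}_v$ on each relevant coordinate and leaves the others fixed.

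The main obstacle, and the step that requires genuine care, is ensuring that the $G$-action (and likewise the $H$-action) on the infinite-ish product is \emph{essentially free} and that the product is genuinely a standard probability space with a finite fundamental domain structure — in other words, that nothing degenerates when passing from the vertex groups to the whole graph product. Freeness on each coordinate does not immediately give freeness of the product action, because an element of $G$ might act trivially on every coordinate it touches only if it is trivial; here I expect to use the normal form: a nontrivial reduced word has a syllable in some $G_v$ acting nontrivially on the $v$-coordinate, and by freeness of the $G_v$-action this moves almost every point, so the product action is essentially free. This is where I anticipate having to argue most carefully, handling the shuffling equivalence on reduced words so that ``acts nontrivially on some coordinate'' is well defined. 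Once essential freeness and the coincidence of orbit relations are established, the conclusion is immediate: the identity map on $X$ is an orbit equivalence between the $G$- and $H$-actions, and orbit equivalent groups are in particular measure equivalent by the Furman--Gaboriau correspondence cited in the paper.

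Finally, I would remark that this construction is precisely the graph-product analogue of Gaboriau's treatment of free products in \cite{Gab-cost,Gab}: the free product case is recovered when $\Gamma$ has no edges, and the commutation relations across edges of $\Gamma$ are exactly what is needed to promote the coordinatewise actions into an action of the full graph product rather than merely the free product.
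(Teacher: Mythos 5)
There is a genuine gap, and it is fatal to the construction as you describe it: the coordinatewise action of $G$ on $X=\prod_{v\in V\Gamma}X_v$ is not free. In that action, syllables from $G_u$ and $G_w$ act on distinct coordinates and therefore commute for \emph{every} pair of vertices $u,w$, not only for adjacent ones; equivalently, your action factors through the quotient $G\to\prod_{v\in V\Gamma}G_v$. Whenever $\Gamma$ is not a complete graph, this quotient map has a large kernel: for non-adjacent $u,w$ the commutator $[g_u,g_w]$ is a nontrivial element of $G$ (by the normal form theorem) that acts as the identity on all of $X$. Your proposed freeness argument -- ``a nontrivial reduced word has a syllable in some $G_v$ acting nontrivially on the $v$-coordinate'' -- does not repair this: the action of a reduced word $g_1\cdots g_k$ on the coordinate $X_v$ is via the ordered product of those syllables lying in $G_v$, i.e.\ via the image of the word under the retraction $G\to G_v$, and a nontrivial reduced word can have trivial image under every such retraction (the commutator above is already an example). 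The case you cite as a sanity check, $\Gamma$ with no edges, is in fact where the failure is most visible: your construction then produces an action of the direct sum $\bigoplus_v G_v$, not a free action of the free product. Since orbit equivalence in the statement requires free actions, the construction does not prove the proposition, even though the $G$-orbits and $H$-orbits on $X$ do coincide.

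The paper's proof is designed precisely to dodge this. It argues by induction on $|V\Gamma|$, changing one vertex group at a time, and replaces your product over all vertices by $X=Z\times Y_v$, where $Z$ already carries a free measure-preserving action of the \emph{whole} group $G$ and $Y_v$ carries the orbit equivalent $G_v$- and $H_v$-actions; freeness of the $G$-action on $X$ is then inherited from the $Z$-factor. The new $H_v$-action is grafted on by orbit matching (for $h\in H_v$, set $hx=gx$ where $g\in G_v$ is the unique element with $gp(x)=hp(x)$ on the $Y_v$-coordinate), one checks it commutes with the unchanged action of $G_{\lk(v)}=H_{\lk(v)}$ so that it extends to all of $H$, and freeness of the resulting $H$-action is verified using the normal form. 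This is the correct graph-product analogue of Gaboriau's free-product argument; the auxiliary free $G$-space $Z$ is not an optional convenience but the ingredient that makes freeness survive. If you want to salvage your approach, you would need to incorporate such a factor (or some other mechanism forcing freeness of the full graph-product action) rather than relying on the product of the vertex spaces alone.
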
 

We mention that orbit equivalence cannot be replaced by measure equivalence in the above statement, even for free products (see \cite[Section~2.2]{Gab}). Since right-angled Artin groups are graph products where the vertex groups are isomorphic to $\mathbb{Z}$, combining the above proposition with the aforementioned theorem of Ornstein and Weiss \cite{OW} yields the following.

\begin{corintro}\label{corintro:graph-products}
For every right-angled Artin group $G$, every group obtained as a graph product over the defining graph of $G$ with countably infinite amenable vertex groups, is measure equivalent to $G$. 
\end{corintro}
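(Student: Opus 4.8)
The plan is to deduce the statement directly from Proposition~\ref{propintro:graph-products} together with the theorem of Ornstein and Weiss~\cite{OW}. Write $\Gamma$ for the defining graph of $G$, so that $G$ is precisely the graph product over $\Gamma$ all of whose vertex groups are isomorphic to $\mathbb{Z}$. Let $H$ be any graph product over the same graph $\Gamma$ whose vertex groups $\{H_v\}_{v\in V\Gamma}$ are all countably infinite and amenable. To apply Proposition~\ref{propintro:graph-products} to the pair $(G,H)$, it suffices to produce, for every vertex $v\in V\Gamma$, a free measure-preserving action of $\mathbb{Z}$ (the vertex group of $G$) and a free measure-preserving action of $H_v$ on standard probability spaces that are orbit equivalent.

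First I would fix, for each vertex $v$, concrete essentially free ergodic probability measure-preserving actions of the two groups in question. For $\mathbb{Z}$ one may take the Bernoulli shift on $(\{0,1\}^{\mathbb{Z}},\mu^{\otimes\mathbb{Z}})$, and likewise for $H_v$ one takes a Bernoulli action; both are essentially free, ergodic, and measure-preserving. The key input is then the theorem of Ornstein and Weiss~\cite{OW} (building on Dye~\cite{Dye1,Dye2}): any two ergodic free probability measure-preserving actions of countably infinite amenable groups are orbit equivalent, as they all give rise to the unique hyperfinite ergodic equivalence relation of type $\mathrm{II}_1$. Since $\mathbb{Z}$ and $H_v$ are both countably infinite amenable, the chosen actions are orbit equivalent. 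This verifies the hypothesis of Proposition~\ref{propintro:graph-products} at every vertex of $\Gamma$.

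Applying Proposition~\ref{propintro:graph-products} to the pair $(G,H)$ then yields orbit equivalent free measure-preserving actions of $G$ and $H$ on standard probability spaces; in particular $G$ and $H$ are measure equivalent, which is the desired conclusion.

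Finally, I would note that there is essentially no obstacle internal to the corollary itself: the entire content has been pushed into Proposition~\ref{propintro:graph-products}, whose proof (adapting Gaboriau's treatment of free products~\cite{Gab-cost,Gab}) is where the genuine work of assembling vertex-level orbit equivalences into a global one takes place, and into the Ornstein--Weiss theorem supplying those vertex-level equivalences. The only point requiring a modicum of care is ensuring that the chosen vertex actions are genuinely free and ergodic, so that Ornstein--Weiss applies and the hypotheses of the proposition are literally met; the use of Bernoulli actions settles this at once.
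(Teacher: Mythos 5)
Your proof is correct and follows exactly the paper's route: the corollary is deduced by applying Proposition~\ref{propintro:graph-products} to $G$ (a graph product with all vertex groups $\mathbb{Z}$) and $H$, with the vertex-level orbit equivalences supplied by the Ornstein--Weiss theorem for countably infinite amenable groups. The remark about using Bernoulli actions to guarantee free ergodic actions matches the device the paper itself invokes in the proof of the proposition.
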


By choosing the vertex groups to be isomorphic to $\mathbb{Z}^n$, this gives infinitely many right-angled Artin groups that are all measure equivalent to $G$, but pairwise non-quasi-isometric. As pointed out by a referee, Corollary~\ref{corintro:graph-products} also yields a continuum of countable groups that are measure equivalent to a given right-angled Artin group (see Theorem~\ref{theo:non-rigidity}).

Let us also mention that conversely, there are examples of right-angled Artin groups which are quasi-isometric but not measure equivalent. For instance, the groups $G_n=(F_3\times F_3)\ast F_n$ are all quasi-isometric \cite[Theorem~1.5]{Why} but pairwise not measure equivalent by comparison of their $\ell^2$-Betti numbers \cite[Corollaire~0.3]{Gab-betti}.

Corollary~\ref{corintro:graph-products} also shows that right-angled Artin groups are never superrigid for measure equivalence in the strongest possible sense. This is in strong contrast to the situation for some non-right-angled Artin groups with finite outer automorphism groups \cite{HH}, or mapping class groups \cite{Kid}; this also strongly contrasts with the quasi-isometry superrigidity results obtained by the second named author in \cite{Hua2}.

In fact, there is another source of failure of measure equivalence superrigidity of right-angled Artin groups, given by the following theorem.

\begin{theointro}\label{theointro-non-superrigid}
For every nonabelian right-angled Artin group $G$, the automorphism group of the universal cover of the Salvetti complex of $G$ contains an infinitely generated (non-uniform) lattice $H$ (in particular $H$ is measure equivalent to $G$, but not commensurable to $G$). 
\end{theointro}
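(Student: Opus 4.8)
The plan is to exhibit the lattice $H$ directly inside the totally disconnected locally compact group $\mathcal{G}:=\Aut(X)$, where $X$ denotes the universal cover of the Salvetti complex. Since $\Gamma$ is finite, $X$ is a locally finite $\CAT$ cube complex, so $\mathcal{G}$ (with the topology of pointwise convergence) is locally compact and second countable, has compact open cell-stabilizers, and acts cocompactly on $X$. The group $G$ acts on $X$ freely and cocompactly, hence $G$ is a \emph{uniform} lattice in $\mathcal{G}$; in particular $\mathcal{G}$ is unimodular. Once we produce an infinitely generated \emph{non-uniform} lattice $H\le\mathcal{G}$, we are done: $G$ and $H$ are then two lattices in the same locally compact second countable group, hence measure equivalent, while $H$ cannot be commensurable to the finitely generated group $G$ because finite generation is a commensurability invariant. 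Throughout I would use the standard covolume criterion in the style of Bass--Lubotzky: for a suitable normalization of Haar measure, a discrete subgroup $H\le\mathcal{G}$ is a lattice if and only if $\sum_{x\in H\backslash X^{(0)}}|H_x|^{-1}<\infty$, and it is uniform exactly when $H\backslash X$ is finite.

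The structural input comes from non-abelianness. Because $G$ is nonabelian, $\Gamma$ is not complete, so some vertex $v$ has a non-neighbour; as $\Gamma$ has no edge-loops, $v$ is moreover not adjacent to itself. This yields the amalgam (equivalently HNN) decomposition $G=G_{\Gamma\setminus v}\ast_{G_{\lk v}}\bigl(\langle v\rangle\times G_{\lk v}\bigr)$, in which the edge group $G_{\lk v}$ has infinite index. Geometrically, the hyperplanes of $X$ dual to $v$-edges are pairwise disjoint (a generator never commutes with itself) and each separates $X$; their dual graph is a tree $T_v$, so $X$ \emph{branches like a tree in the $v$-direction}, the pieces obtained by cutting along $v$-hyperplanes being copies of $X_{\Gamma\setminus v}$ and the hyperplanes themselves copies of $X_{\lk v}$. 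This branching is what makes $\mathcal{G}$ genuinely non-discrete: branches of $X$ lying across distinct $v$-hyperplanes are pairwise isomorphic, and the homogeneity of $X$ provides automorphisms permuting a finite family of such branches while fixing $X$ outside the subcomplex over a finite subtree of $T_v$. By nesting these permutations over larger finite subtrees one obtains finite subgroups of $\mathcal{G}$ of arbitrarily large order with prescribed fixed-sets --- precisely the cubical analogue of the finite subtree-automorphism groups that drive the construction of tree lattices.

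With this local richness in hand I would run the Bass--Lubotzky construction of non-uniform lattices, transported from regular trees to $X$. Concretely, one builds $H$ as the fundamental group of an infinite graph (more precisely, complex) of \emph{finite} groups supported along a ray in $T_v$: at the $n$-th piece one installs a finite subgroup $F_n\le\mathcal{G}$ of the type above, with $|F_n|\to\infty$ fast enough that $\sum_n|F_n|^{-1}$, and hence the covolume $\sum_{x\in H\backslash X^{(0)}}|H_x|^{-1}$, converges; then $H$ is a lattice. It is non-uniform because its fundamental domain is an infinite ray of pieces, hence noncompact; and, being an increasing union of the finite sub-amalgams along the ray, each properly contained in the next (the relevant indices being $>1$), it is infinitely generated. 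The hard part will be exactly the passage from trees to the higher-dimensional complex $X$: one must (i) make rigorous the local richness claim, i.e.\ produce inside $\Aut(X)$ arbitrarily large finite subgroups with the fixed-point behaviour needed to serve as local groups --- this is automatic for regular trees but genuinely uses the branching furnished by non-abelianness (for $G=\mathbb{Z}^n$ one has $X=\mathbb{R}^n$ with no branching, and the construction correctly fails); and (ii) carry out the developability and covolume bookkeeping for a complex of groups rather than a graph of groups, verifying that the infinite object assembled develops to $X$ and has finite covolume. The remaining steps are the same formal manipulation as in the tree case.
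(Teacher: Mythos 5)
Your overall architecture is the right one and runs parallel to the paper's: non-completeness of $\Gamma$ produces a tree direction inside $\widetilde{S}_\Gamma$, Serre's covolume criterion $\sum_x |H_x|^{-1}<\infty$ detects lattices, a non-uniform tree-lattice-style construction with finite local groups of growing order gives finite covolume with noncompact quotient, and infinite generation plus measure equivalence of lattices finish the argument. But the two items you explicitly defer as ``the hard part'' --- (i) producing finite subgroups of $\Aut(\widetilde{S}_\Gamma)$ with prescribed supports, and (ii) the developability and covolume bookkeeping for the resulting infinite complex of groups --- are not residual technicalities: they are the entire content of the proof, and your sketch of (i) is not correct as stated. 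An automorphism of $\widetilde{S}_\Gamma$ that permutes branches hanging off a vertex piece $Y\cong\widetilde{S}_{\Gamma\setminus v}$ at \emph{distinct} copies of $\widetilde{S}_{\lk(v)}$ inside $Y$ must carry one attaching locus to the other, hence cannot fix $Y$, let alone ``fix $X$ outside the subcomplex over a finite subtree of $T_v$''; only the two branches attached along the \emph{same} copy of $\widetilde{S}_{\lk(v)}$ (the two $v$-directions) can be swapped with support in those branches. So the local richness you need is genuinely global in nature, and no mechanism for assembling the $F_n$ or developing the complex of groups to $\widetilde{S}_\Gamma$ is given.

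The paper avoids this local analysis entirely. It picks two non-adjacent vertices $a,b$, takes the standard subcomplex $T$ of type $\{a,b\}$ (a $4$-valent tree), and writes down an explicit non-uniform, \emph{label-preserving} lattice $H\le\Aut(T)$ as the fundamental group of an infinite graph of finite $2$-groups whose Bass--Serre tree recovers $T$ with its labelling, with $\sum 1/\alpha_x<\infty$ by Serre's criterion. The extension to $\Aut(\widetilde{S}_\Gamma)$ is then a covering-space argument: one forms the intermediate cover $X\to S_\Gamma$ corresponding to the kernel of $G_\Gamma\to F_{a,b}$, whose $1$-skeleton is $T$ with loops attached at every vertex, observes that the label-preserving $H$-action on $T$ extends to $X$, and defines $H'\le\Aut(\widetilde{S}_\Gamma)$ as the group of all lifts. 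The short exact sequence $1\to\Aut(\pi)\to H'\to H\to 1$ identifies $H'\backslash\widetilde{S}_\Gamma$ with $H\backslash X$ with matching vertex-stabilizer cardinalities, so finiteness of covolume, non-uniformity and non-finite-generation all transfer from $H$ to $H'$. If you want to salvage your approach, this lift-from-a-standard-subtree device is the missing ingredient; without it, or a rigorous substitute for your step (i), the proof is incomplete.
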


These lattices are constructed as follows. First we take an isometrically embedded tree in the universal cover and a non-uniform lattice $H$ acting on this tree (see e.g.\ \cite{bass2001tree}). Then we extend this action to a larger group acting on the ambient space. This method only produces non-uniform lattices which are not finitely generated, and we do not know whether there are finitely generated examples.

\paragraph*{A word on the proof of the classification theorem.}
In the remainder of this introduction, we would like to explain how we prove our main classification theorem, in the form of Theorem~\ref{theointro:main} (from which Theorem~\ref{theointro:1} follows). The general structure of our proof is inspired by Kida's strategy in the mapping class group setting \cite{Kid}. The orbit equivalence interpretation reduces our proof to a problem about measured groupoids associated to measure-preserving actions of right-angled Artin groups on standard probability spaces. Given two such actions of $G_1$ and $G_2$ on the same standard probability space $Y$ with the same orbit structure -- in other words, a measured groupoid  over $Y$ coming with two cocycles, one towards $G_1$ and the other towards $G_2$, the goal is to build a canonical map that associates to every point of $Y$, an isomorphism between $\Gamma^e_1$ and $\Gamma_2^e$, where $\Gamma^e_i$ denotes the extension graph of $G_i$. To this end, given a transvection-free right-angled Artin group $G$, we characterize subgroupoids of $G\ltimes Y$ (the groupoid coming from the $G$-action on $Y$ -- or in fact, more generally, a restriction of this groupoid) that naturally correspond to stabilizers of vertices of $\Gamma^e$, or pairs of stabilizers of adjacent vertices of $\Gamma^e$, in a purely groupoid-theoretic way.

In order to keep this introduction not too technical, we will not provide any groupoid-theoretic statement, however we will describe some of the ideas behind their group-theoretic analogues. Namely, we will explain how to obtain an algebraic characterization of stabilizers of vertices of $\Gamma^e$ in $G_\Gamma$ (see Proposition~\ref{prop:raag} for the version for groupoids), and an algebraic characterization of adjacency of vertices.

So let $v$ be a vertex in $\Gamma^e$ -- corresponding to a cyclic parabolic subgroup $Z_v$ of $G_\Gamma$. The stabilizer of $v$ for the $G_\Gamma$-action on $\Gamma^e$ is the centralizer of $Z_v$. Classical work of Servatius \cite{Ser} ensures that this centralizer splits as a direct product $Z_v\times Z_v^{\perp}$, where $Z_v^{\perp}$ is a parabolic subgroup of $G_\Gamma$. More precisely, if $Z_v$ is conjugate to the standard cyclic subgroup associated to a vertex $\bar v$ of $\Gamma$, then $Z_v^{\perp}$ is conjugate to the standard parabolic subgroup associated to the link of $\bar v$ in $\Gamma$. The transvection-freeness assumption ensures that $Z_v^{\perp}$ is nonabelian. In fact the following holds. 
\begin{itemize}
\item[($*$)] The centralizer $C_{G_\Gamma}(Z_v)$ is a nonamenable subgroup of $G_\Gamma$ that contains an infinite normal amenable subgroup. Among the subgroups of $G_\Gamma$, it is maximal with respect to this property.
\end{itemize}
This is reminiscent of the characterization of centralizers of Dehn twists -- i.e.\ stabilizers of isotopy classes of essential simple closed curves -- for finite-type mapping class groups.  Property~$(\ast)$ is phrased on purpose in terms of \emph{amenability} and \emph{normality}, as these notions have groupoid-theoretic analogues -- in our proof, amenability is used to obtain invariant probability measures on the (compact) Roller boundary of the universal cover of the Salvetti complex, as will be explained below under the heading `Geometric tools'. 

However (contrary to the case of the curve graph of a surface), Property~$(*)$ is not enough to characterize vertex stabilizers of $\Gamma^e$: indeed $G_\Gamma$ could also contain a maximal parabolic subgroup splitting as a direct product of two nonabelian subgroups $P_1\times P_2$ with trivial center, and in this case a subgroup of the form $\langle g\rangle \times P_2$ with $g$ generic in $P_1$, would also satisfy Property~$(*)$. To distinguish these two types of subgroups, we make the following observation: when $H=C_{G_\Gamma}(Z_v)$, the centralizer $H'$ of any infinite amenable subgroup $Z'$ not commensurable to $Z_v$, will look very different from $H$, in the sense that one can always find a nonamenable subgroup of $H$ that intersects $H'$ trivially. On the other hand, when $H$ is of the form $\langle g\rangle \times P_2$ as above, by choosing $g'$ to be any other generic element of $P_1$, the centralizer $H'$ of $g'$ will be much closer to $H$, and in particular every nonamenable subgroup of $H'$ will have to intersect $P_2$, whence $H$, nontrivially.    See assertion~2.(b) from Proposition~\ref{prop:raag} for the groupoid-theoretic version.  

We also need to characterize adjacency in $\Gamma^e$. For that, the key observation is that two vertices $v,w\in \Gamma^e$ are adjacent if and only if there are only finitely many vertices of $\Gamma^e$ that are fixed by $C_{G_\Gamma}(Z_v)\cap C_{G_\Gamma}(Z_w)$.

\paragraph*{Geometric tools.} Our proof of Theorem~\ref{theointro:main} is an implementation of a groupoid version of the above strategy. This groupoid version involves a certain geometric setting (with features of negative curvature), in order to apply an argument introduced by Adams in \cite{Ada}. In the work of Kida \cite{Kid}, this came in the form of a natural partition of the (compact) Thurston boundary of the Teichmüller space into arational and nonarational laminations. In our previous work on Artin groups of hyperbolic type \cite{HH}, this came in the form of a partition of the horofunction compactification of a certain $\mathrm{CAT}(-1)$ simplicial complex on which the group is acting. In the present paper, we exploit the cubical geometry of right-angled Artin groups by considering the Roller boundary $\partial_R\widetilde{S}_\Gamma$ of the universal cover of the Salvetti complex, and its partition, introduced by Fern\'os in \cite{Fer}, into \emph{regular} points and \emph{non-regular} points.

The three important features we exploit are the following: every non-regular point has a natural associated parabolic subgroup (see Theorem~\ref{theointro:roller} below); there is a \emph{barycenter map} that canonically associates a vertex of $\widetilde{S}_\Gamma$ to every triple of pairwise distinct regular points \cite{FLM}; the action of $G_\Gamma$ on the set of regular points (in fact on the whole $\partial_R\widetilde{S}_\Gamma$) is Borel amenable \cite{NS,Duc}.

Let us explain how this geometric setting is used, by explaining why a subgroup $H\subseteq G_\Gamma$ satisfying Property~$(*)$ above (in particular, containing an infinite normal amenable subgroup $A$) preserves a parabolic subgroup. Amenability of $A$ is used to get an invariant probability measure $\nu$ on the \emph{compact} metrizable space $\partial_R\widetilde{S}_\Gamma$. Theorem~\ref{theointro:roller} below, saying that every non-regular point has a natural associated parabolic subgroup, is exploited to build a  canonical $A$-invariant (whence $H$-invariant) parabolic subgroup if $\nu$ gives positive measure to the set of non-regular points. If $\nu$ is supported on the set of regular points, and if its support has cardinality at least $3$, then the aforementioned \emph{barycenter map} is used to contradict the fact that $A$ is infinite. Finally, if $\nu$ is supported on at most two points, then the amenability of point stabilizers is exploited to get a contradiction to the fact that $H$ is nonamenable -- in the groupoid version, this is replaced by the Borel amenability of the action of $G_\Gamma$ on $\partial_R\widetilde{S}_\Gamma$.

Let us finally state the result we use concerning non-regular points (see Section~\ref{sec:background-roller} for relevant definitions). In this statement $\partial_R\widetilde{S}_\Gamma$ is equipped with its Borel $\sigma$-algebra.

\begin{theointro}\label{theointro:roller}
Let $\Gamma$ be a finite simple graph. Then we can assign to each non-regular point $\xi\in\partial_R\widetilde{S}_\Gamma$ a unique standard subcomplex $Y\subseteq\widetilde{S}_\Gamma$ such that for every combinatorial geodesic ray $r$ representing $\xi$, the subset $Y$ is the smallest standard subcomplex of $\widetilde{S}_\Gamma$ that contains a subray of $r$. This assignment is measurable and $G_\Gamma$-equivariant, and $Y$ is contained in a standard subcomplex of $\widetilde{S}_\Gamma$ which splits non-trivially as a product.  
\end{theointro}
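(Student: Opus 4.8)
The plan is to build the assignment $\xi\mapsto Y$ entirely in terms of which standard subcomplexes ``absorb'' the ray at infinity, so the first task is to make the phrase ``smallest standard subcomplex containing a subray of $r$'' precise and manifestly independent of the chosen ray $r$. The key is a closure criterion: for a standard (hence convex) subcomplex $Z\subseteq\widetilde{S}_\Gamma$ and a point $\xi\in\partial_R\widetilde{S}_\Gamma$, a combinatorial geodesic ray $r$ representing $\xi$ has a subray contained in $Z$ if and only if $\xi$ lies in the closure $\overline{Z}=Z\cup\partial_R Z$ of $Z$ inside the Roller compactification. I would prove this using convexity together with the nearest-point (gate) projection onto $Z$, which are recalled in Section~\ref{sec:background-roller}. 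Granting it, the set $\mathcal{S}(\xi):=\{Z\text{ standard}:\xi\in\overline{Z}\}$ of absorbing subcomplexes depends only on $\xi$, which already yields the ray-independence asserted in the theorem.

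Next I would establish existence and uniqueness of a smallest element of $\mathcal{S}(\xi)$. This set is nonempty, since $\widetilde{S}_\Gamma\in\mathcal{S}(\xi)$, and it is closed under intersection: if $Z_1,Z_2\in\mathcal{S}(\xi)$ then a sufficiently far subray of $r$ lies in $Z_1\cap Z_2$, which is again a standard subcomplex by the intersection-closedness of standard subcomplexes. To produce a minimum I would use the finite invariant furnished by the type: every standard subcomplex $Z$ is combinatorially isomorphic to $\widetilde{S}_{\Lambda_Z}$ for an induced subgraph $\Lambda_Z\subseteq\Gamma$, and a proper inclusion $Z'\subsetneq Z$ forces $\Lambda_{Z'}\subsetneq\Lambda_Z$. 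Since the poset of induced subgraphs of the finite graph $\Gamma$ satisfies the descending chain condition, $\mathcal{S}(\xi)$ does too, and together with closure under intersection this yields a unique minimum $Y=Y(\xi)$, which is exactly the smallest standard subcomplex containing a subray of every ray representing $\xi$.

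Equivariance and measurability are then formal. As $g\cdot Z$ is standard with $\overline{gZ}=g\overline{Z}$, we get $\mathcal{S}(g\xi)=g\,\mathcal{S}(\xi)$ and hence $Y(g\xi)=g\,Y(\xi)$. For measurability, the target is the countable set of standard subcomplexes with its discrete Borel structure, so it suffices to check that each fibre $\{\xi:Y(\xi)=Z_0\}$ is Borel; this fibre equals $\overline{Z_0}\cap\bigcap_{Z\subsetneq Z_0}(\partial_R\widetilde{S}_\Gamma\setminus\overline{Z})$, a closed set from which countably many closed sets are removed, intersected with the (Borel) set of non-regular points.

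Finally I would establish the product structure, which is where non-regularity is used. Write $\Gamma=\Gamma_1*\cdots*\Gamma_k$ for the maximal join decomposition, so that $\widetilde{S}_\Gamma=X_1\times\cdots\times X_k$ with each $X_i=\widetilde{S}_{\Gamma_i}$ irreducible, and recall (Section~\ref{sec:background-roller}, after Fern\'os \cite{Fer}) that $\xi=(\xi_1,\dots,\xi_k)$ is regular precisely when $\xi_i\in\partial_R X_i$ for every $i$. Thus non-regularity means $\xi_{i_0}$ is an interior point for some factor $i_0$, while $\xi\in\partial_R\widetilde{S}_\Gamma$ forces $\xi_{j_0}\in\partial_R X_{j_0}$ for some factor $j_0$. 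Minimality of $Y$ then forces its type to reduce to the empty subgraph in the $i_0$-factor and to be unbounded in the $j_0$-factor, i.e.\ $\Lambda_Y\cap\Gamma_{i_0}=\emptyset$ while $\Lambda_Y\cap\Gamma_{j_0}\neq\emptyset$. Writing $Y=g\,\widetilde{S}_{\Lambda_Y}$, I would take $Z:=g\,\widetilde{S}_{\Lambda_Y\cup\Gamma_{i_0}}\supseteq Y$: since every vertex of $\Gamma_{i_0}$ is adjacent to every vertex of $\Lambda_Y$ (because $\Gamma_{i_0}$ is a join factor and $\Lambda_Y\cap\Gamma_{i_0}=\emptyset$), the induced subgraph on $\Lambda_Y\cup\Gamma_{i_0}$ is the join $\Gamma[\Lambda_Y]*\Gamma_{i_0}$ of two nonempty graphs, so $Z=g(\widetilde{S}_{\Lambda_Y}\times\widetilde{S}_{\Gamma_{i_0}})$ splits nontrivially as a product and contains $Y$. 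The main obstacle I anticipate is precisely this last translation --- turning the ultrafilter/Roller notion of non-regularity into the combinatorial statement that the type of the minimal absorbing subcomplex omits an entire irreducible factor --- together with the supporting closure criterion; once these are in place, the remaining steps are bookkeeping.
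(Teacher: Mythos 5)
Your first three steps are sound and essentially reproduce the paper's Lemmas~\ref{lemma:confined} and~\ref{lem:borel}: the closure criterion (a ray representing $\xi$ has a subray in a convex standard subcomplex $Z$ if and only if $\xi\in\partial_R Z$) is correct and is in fact the reformulation of the map $\Phi$ that the paper records in the remark after Lemma~\ref{lemma:confined}; intersection-closedness of standard subcomplexes together with the descending chain condition on types then yields the unique minimal $Y(\xi)$, and equivariance and measurability follow as you describe.

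The final step, however, rests on a false characterization of regularity, and that is precisely where the substance of the theorem lies. It is not true that, writing $\widetilde{S}_\Gamma=X_1\times\dots\times X_k$ according to the de Rham decomposition of $\Gamma$, a point $\xi=(\xi_1,\dots,\xi_k)$ is regular exactly when every coordinate lies in $\partial_R X_i$. Regularity is defined via strong separation of hyperplanes and is a local, not a global, phenomenon. Take $\Gamma$ irreducible (so $k=1$, and your criterion would declare every point of $\partial_R\widetilde{S}_\Gamma$ regular) containing an edge $\{a,b\}$, and let $\xi$ be the point of $\partial_R\widetilde{S}_\Gamma$ determined by the diagonal ray $(ab)^\infty$ in a standard flat $F$ spanned by $a$ and $b$. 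If $h_1,h_2\in U_\xi$ are an $a$-halfspace and a $b$-halfspace crossing $F$, then any $k\in U_\xi$ with $k\subseteq h_1\cap h_2$ has $\partial k$ dual to an $a$- or $b$-edge of $F$, and the hyperplanes of the other label crossing $F$ cross both $\partial k$ and the corresponding $\partial h_i$; so $\xi$ is non-regular even though $\widetilde{S}_\Gamma$ admits no global product decomposition. For such $\xi$ your argument produces nothing: there is no factor $i_0$ in which $\xi$ is ``interior'', and the required join subcomplex (here one of type $\{a\}\circ\lk(a)$, say) is a \emph{proper} standard subcomplex invisible to the global decomposition. This is exactly the content of the paper's Lemma~\ref{lem:join subgroup}, whose proof is a genuine hyperplane argument: one extracts from a ray representing a non-regular point an infinite chain of hyperplanes that are pairwise non-strongly-separated (via \cite[Proposition~7.4]{Fer}), forms the standard subcomplexes $B_i\subseteq\mathfrak{h}_1$ with $\calh(B_i)=\calh(\mathfrak{h}_1)\cap\calh(\mathfrak{h}_i)$, stabilizes their types, and shows that a tail of the ray lies in a standard subcomplex of type $\Lambda\circ\Lambda^\perp$ with both factors nonempty. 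That step cannot be reduced to bookkeeping on the de Rham decomposition of $\Gamma$, and without it both the product-structure assertion and the Borelness of $\partial_\reg\widetilde{S}_\Gamma$ remain unproved.
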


\paragraph*{Open questions.} Our work raises several questions regarding the measure equivalence classification and rigidity of right-angled Artin groups.
\begin{enumerate}
\item Given a right-angled Artin group $G_\Gamma$ with finite outer automorphism group, what can be said of countable groups $H$ that are measure equivalent to $G_\Gamma$? For instance, do they necessarily act on a $\mathrm{CAT}(0)$ cube complex with amenable vertex stabilizers?
\item A sharper notion of \emph{$L^1$-measure equivalence} was introduced by Bader, Furman and Sauer in \cite{BFS}, by imposing an integrability condition on the measure equivalence cocycle. Are there right-angled Artin groups that are superrigid for $L^1$-measure equivalence? It turns out that free abelian groups of different ranks are not $L^1$-measure equivalent to one another (see \cite{Aus}), thus suggesting that the obstruction coming from Corollary~\ref{corintro:graph-products} vanishes under this sharper notion. The question may be subdivided into two parts. First, if a finitely generated group $H$ is $L^1$-measure equivalent to $G_\Gamma$, is it commensurable to a lattice in $\Aut(\widetilde{S}_\Gamma)$? Second, are all finitely generated such lattices cocompact? 
\item Classify right-angled Artin groups up to measure equivalence, beyond the class of those with finite outer automorphism group. For instance, Behrstock and Neumann proved in \cite{behrstock2008quasi} that all right-angled Artin groups whose defining graph is a tree of diameter at least $3$ are quasi-isometric. Are they all measure equivalent?    
\end{enumerate}

\paragraph*{Organization of the paper.}

The paper has four parts. The first part is mostly a background section on right-angled Artin groups. In the second part, we prove Theorem~\ref{theointro:roller} concerning the geometry of the Roller boundary of the universal cover of the Salvetti complex. The third part implements the groupoid version of the strategy explained in this introduction, and gives a proof of our main classification theorems (Theorems~\ref{theointro:1} and~\ref{theointro:main}). In Section~4, we present two sources of failure of measure equivalence superrigidity of right-angled Artin groups: graph products of countably infinite amenable groups, and non-uniform lattices in the automorphism group of the universal cover of the Salvetti complex. 

\paragraph*{Acknowledgments.} 
We thank the referees for their careful reading of our manuscript and their helpful suggestions. The first named author acknowledges support from the Agence Nationale de la Recherche under Grant ANR-16-CE40-0006 DAGGER.

\setcounter{tocdepth}{1}
\tableofcontents

\section{Right-angled Artin groups: background and complements}

In the present section, we review standard facts about right-angled Artin groups and establish a few statements that we will need in the sequel. More background on right-angled Artin groups can be found in \cite{charney2007introduction}. Section~\ref{sec:review} reviews basic facts about right-angled Artin groups, their parabolic subgroups, their automorphisms, and Salvetti complexes. Section~\ref{sec:rigidity} reviews earlier work of the second named author \cite{Hua} establishing a rigidity statement for extension graphs of right-angled Artin groups. Sections~\ref{sec:transvection-free} and~\ref{sec:full-support} establish two extra statements that are used in Section~\ref{sec:me}.

We will stick to the setting of right-angled Artin groups in the present paper, but would like to mention that many of the results presented here have also been generalized to broader contexts. As a specific example, several results regarding parabolic subgroups have been generalized to graph products in \cite[Section~3]{antolin2015tits}.

\subsection{Review of basic facts about right-angled Artin groups}\label{sec:review}

\paragraph*{Definition.} Given a finite simple graph $\Gamma$ with vertex set $V\Gamma$, the \emph{right-angled Artin group} with defining graph $\Gamma$, denoted by $G_\Gamma$, is given by the following presentation:
\begin{center}
	$\langle V\Gamma$\ |\ $[v,w]=1$ if $v$ and $w$ are joined by an edge$\rangle$.
\end{center}
In this way $V\Gamma$ is identified to a \textit{standard generating set} for $G_\Gamma$. Given two finite simple graphs $\Gamma$ and $\Lambda$, a theorem of Droms \cite{Dro} asserts that $G_\Gamma$ and $G_\Lambda$ are isomorphic if and only if $\Gamma$ and $\Lambda$ are isomorphic. 

\paragraph*{Parabolic subgroups.} Let $\Lambda\subseteq\Gamma$ be a \emph{full} subgraph, i.e.\ two vertices of $\Lambda$ are adjacent in $\Lambda$ if and only if they are adjacent in $\Gamma$. Then there is an injective homomorphism $G_{\Lambda}\hookrightarrow G_{\Gamma}$, whose image is called a \emph{standard subgroup} of $G_{\Gamma}$ with \emph{type} $\Lambda$. A conjugate of such a subgroup is called a \emph{parabolic subgroup of $G_\Gamma$ with type $\Lambda$}.  The type of a parabolic subgroup is well-defined: if $\Lambda_1,\Lambda_2\subseteq\Gamma$ are two full subgraphs, then $G_{\Lambda_1}$ and $G_{\Lambda_2}$ are not conjugate unless $\Lambda_1=\Lambda_2$, as follows from \cite[Proposition~2.2]{charney2007automorphisms}. Note that the definitions of parabolic subgroups and standard subgroups depend on the choice of a standard generating set for $G_\Gamma$.

The \emph{star} of a vertex $v$ in $\Gamma$, denoted by $\st(v)$, is the full subgraph spanned by $v$ and all the vertices that are adjacent to $v$. Its \emph{link} $\lk(v)$ is defined to be the full subgraph spanned by all the vertices that are adjacent to $v$. 

Given two graphs $\Gamma_1$ and $\Gamma_2$, we denote by $\Gamma_1\circ \Gamma_2$ the join of $\Gamma_1$ and $\Gamma_2$. Every finite simple graph $\Gamma$ has a canonical join decomposition $\Gamma=\Gamma_0\circ \Gamma_1\circ\cdots\circ\Gamma_k$, where $\Gamma_0$ is the maximal clique factor of $\Gamma$, and for every $i\in\{1,\dots,k\}$, the graph $\Gamma_i$ is \emph{irreducible}, i.e.\ it does not admit any nontrivial join decomposition. We call this the \emph{de Rham decomposition} of $\Gamma$. There is also an induced \emph{de Rham decomposition} of $G_\Gamma$, namely $G_\Gamma=\mathbb Z^n\times G_{\Gamma_1}\times\cdots\times G_{\Gamma_k}$.

For a full subgraph $\Lambda\subseteq \Gamma$, we define $\Lambda^{\perp}$ to be the full subgraph of $\Gamma$ spanned by the collection of all vertices of $\Gamma\setminus \Lambda$ which are adjacent to every vertex of $\Lambda$. For example, with this terminology, we have $\lk(v)=\{v\}^\perp$. The following was proved by Charney, Crisp and Vogtmann in \cite[Proposition~2.2]{charney2007automorphisms}, building on work of Godelle \cite{God}.

\begin{prop}\label{prop:normalizer}
	Let $\Gamma$ be a finite simple graph, and let $\Lambda\subseteq\Gamma$ be a full subgraph. Then the normalizer of $G_{\Lambda}$ in $G_{\Gamma}$ is $G_{\Lambda\circ \Lambda^{\perp}}$. 
\end{prop}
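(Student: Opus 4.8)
The plan is to establish the two inclusions separately; the forward one is routine and the reverse one is the heart of the matter. Throughout I would write $M:=V\Lambda\cup V\Lambda^{\perp}$ for the vertex set of the join $\Lambda\times\Lambda^{\perp}$. Since every vertex of $\Lambda^{\perp}$ is by definition adjacent in $\Gamma$ to every vertex of $\Lambda$, and $V\Lambda\cap V\Lambda^{\perp}=\emptyset$, the full subgraph spanned by $M$ is exactly this join, so $G_{\Lambda\times\Lambda^{\perp}}=G_{\Lambda}\times G_{\Lambda^{\perp}}$. For the inclusion $G_{\Lambda\times\Lambda^{\perp}}\subseteq N_{G_\Gamma}(G_\Lambda)$ I note that $G_\Lambda$ normalizes itself, while every generator of $\Lambda^{\perp}$ commutes with every generator of $\Lambda$, so $G_{\Lambda^{\perp}}$ centralizes $G_\Lambda$; hence the product lies in the normalizer. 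The work is therefore to prove the reverse inclusion, which I would phrase in the clean equivalent form $N_{G_\Gamma}(G_\Lambda)\subseteq G_M$: every element normalizing $G_\Lambda$ is supported on $M$.

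To prove $N_{G_\Gamma}(G_\Lambda)\subseteq G_M$ I would argue by contradiction, using the normal form theory of right-angled Artin groups (reduced words are well defined up to transposition of adjacent commuting generators, so that both $\supp(g)$ and the set of possible final letters of an element $g$ are well defined). Choose $g\in N_{G_\Gamma}(G_\Lambda)$ of minimal word length subject to $\supp(g)\not\subseteq M$; in particular $g\neq 1$. The first step is to show that no final letter $t$ of $g$ can lie in $M$: if $t\in V\Lambda$ then $t\in G_\Lambda$, and if $t\in V\Lambda^{\perp}$ then $t$ centralizes $G_\Lambda$, so in either case $t\in N_{G_\Gamma}(G_\Lambda)$, whence $gt^{-1}$ is a strictly shorter element of the normalizer. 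By minimality $gt^{-1}\in G_M$, and since $t\in M$ this forces $g\in G_M$, a contradiction. Thus every final letter of $g$ lies outside $M$; fix one such $t$ and write $g=g_1t$ reduced. Because $t\notin M$ we have $t\notin V\Lambda^{\perp}$ and $t\notin V\Lambda$, so there is a vertex $v_0\in V\Lambda$ which is not adjacent to $t$ in $\Gamma$; in particular $t$ and $v_0$ do not commute.

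The contradiction then comes from testing the normalizer condition against $v_0$. Since $t$ does not commute with $v_0$, no simplification of $tv_0t^{-1}$ is possible and $gv_0g^{-1}=g_1\,t v_0 t^{-1}\,g_1^{-1}$. The key point is that this distinguished occurrence of the generator $t$ cannot cancel: on its left the word $g_1t$ is reduced, so $g_1$ cannot absorb it, while on its right it is immediately followed by $v_0$, which does not commute with $t$ and therefore blocks it from travelling further right into $g_1^{-1}$. Consequently $t\in\supp(gv_0g^{-1})$. But $g$ normalizes $G_\Lambda$, so $gv_0g^{-1}\in G_\Lambda$ and its support is contained in $V\Lambda$, which does not contain $t$; this is the desired contradiction and yields $N_{G_\Gamma}(G_\Lambda)\subseteq G_M=G_\Lambda\times G_{\Lambda^{\perp}}$.

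The main obstacle, and the only genuinely technical ingredient, is the ``shielding'' claim in the last step: a generator trapped between a reduced prefix and a non-commuting letter survives in the support of the product. I would isolate this as a short combinatorial lemma: if $g_1t$ is reduced and the occurrence of $t$ is immediately followed by a generator that does not commute with $t$, then $t\in\supp(g_1 t\,R)$ for any suffix $R$. Some care is needed to state and prove this precisely, since one must rule out cancellation of $t$ against letters of $R$ that could in principle be shuffled toward the front; the no-cancellation intuition is justified cleanly from the normal form theorem, for instance through the disc-diagram or piling description of reduced words, or equivalently through the hyperplane combinatorics of $\widetilde{S}_\Gamma$. Alternatively one can bypass the lemma by invoking the amalgamated product splitting $G_\Gamma=G_{\Gamma\setminus\{t\}}\ast_{G_{\lk(t)}}G_{\st(t)}$ and a Bass--Serre normal form argument to detect $t$. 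Everything else in the proof reduces to the minimal-length bookkeeping described above.
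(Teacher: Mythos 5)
The paper offers no proof of this proposition: it is quoted from Charney--Crisp--Vogtmann (Proposition~2.2 of their paper on automorphisms, building on Godelle), so your self-contained normal-form argument is necessarily a different route. The overall structure is sound: the forward inclusion is as you say, the reduction to $N_{G_\Gamma}(G_\Lambda)\subseteq G_M$ is correct, the minimal-length/final-letter bookkeeping is valid, and the choice of $v_0\in V\Lambda$ non-adjacent to $t$ exists precisely because $\Lambda^{\perp}$ consists of \emph{all} vertices outside $\Lambda$ adjacent to all of $\Lambda$. The contradiction $t\in\supp(gv_0g^{-1})\subseteq V\Lambda$ then finishes the argument.

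The one step that needs repair is the isolated ``shielding'' lemma, which as literally stated is false: with $g_1$ empty and $R=v_0v_0^{-1}t^{-1}$, the word $g_1tR$ represents the identity even though $t$ is immediately followed by a non-commuting letter. You cannot quantify over arbitrary suffixes $R$; the argument must use the specific shape $R=v_0t^{-1}g_1^{-1}$, and even then the naive ``$v_0$ blocks $t$'' reasoning is incomplete, because in principle the blocking letter $v_0$ could itself be deleted at an earlier stage of the reduction (paired with a $v_0^{\pm1}$ inside $g_1^{\pm1}$) before the distinguished $t$ is examined. The clean fix is exactly the Bass--Serre alternative you mention, and it is worth promoting from a footnote to the main argument: write $G_\Gamma=G_{\Gamma\setminus\{t\}}\ast_{G_{\lk(t)}}G_{\st(t)}$, set $A=G_{\Gamma\setminus\{t\}}$, and note that $t\in\supp(h)$ if and only if $h\notin A$. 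If $g\notin A$ and $gv_0g^{-1}\in A$, then $gv_0g^{-1}$ fixes the two distinct vertices $A$ and $gA$ of the Bass--Serre tree, hence an edge incident to $gA$, so $v_0\in aG_{\lk(t)}a^{-1}$ for some $a\in A$; since the smallest parabolic containing the generator $v_0$ has type $\{v_0\}$, Lemma~\ref{lemma:parabolics}(4) forces $v_0\in\lk(t)$, a contradiction. This version only requires $t\in\supp(g)$ and $v_0\notin\st(t)$ --- not that $t$ be a final letter --- so it actually makes your minimal-length and final-letter analysis unnecessary: for any $g$ in the normalizer with $\supp(g)\not\subseteq M$, pick $t\in\supp(g)\setminus M$ and $v_0\in V\Lambda$ non-adjacent to $t$, and conclude directly. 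Two further trivia: handle the case where the final letter is $t^{-1}$ rather than $t$ (immaterial in the Bass--Serre version), and note the statement is vacuous when $\Lambda=\emptyset$.
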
 

Let now $P=gG_{\Lambda}g^{-1}$ be a parabolic subgroup. We define $P^\perp=gG_{\Lambda^\perp}g^{-1}$. This is well-defined: if we can write the parabolic subgroup $P$ in two different ways $gG_{\Lambda}g^{-1}$ and $hG_{\Lambda'}h^{-1}$, then $\Lambda=\Lambda'$ and Proposition~\ref{prop:normalizer} ensures that $gG_{\Lambda^\perp}g^{-1}=hG_{\Lambda^\perp}h^{-1}$. Moreover, the normalizer of $P$ in $G_{\Gamma}$ is $P\times P^\perp$.  

In the following lemma, we record basic facts about parabolic subgroups of right-angled Artin groups, which follow from the works of Duncan, Kazachkov and Remeslennikov in \cite[Section~2.2]{duncan2007parabolic}. Note that (5) and (6) generalize Proposition~\ref{prop:normalizer}.

\begin{lemma}[Duncan--Kazachkov--Remeslennikov]\label{lemma:parabolics}
Let $\Gamma$ be a finite simple graph.
\begin{enumerate}
	\item The intersection of two parabolic subgroups of $G_\Gamma$ is a parabolic subgroup. Moreover, for full subgraphs $\Gamma_1$ and $\Gamma_2$ of $\Gamma$, one has $G_{\Gamma_1}\cap G_{\Gamma_2}=G_{\Gamma_1\cap\Gamma_2}$. 
	\item There is a finite integer $n>0$ such that every chain $P_1\subsetneq P_2\subsetneq\cdots\subsetneq P_k$ of parabolic subgroups of $G_\Gamma$ has length $k\le n$.
	\item Every element of $G_\Gamma$ is contained in a unique smallest parabolic subgroup. 
	\item If $P_1$ and $P_2$ are two parabolic subgroups of $G_\Gamma$ with $P_1\subseteq P_2$, then the type of $P_1$ is contained in the type of $P_2$.
	\item For every parabolic subgroup $P$ of $G_\Gamma$ and every $g\in G$, if $P\subseteq g(P\times P^\perp)g^{-1}$, then $g\in P\times P^\perp$.
	\item Given any two induced subgraphs $\Gamma_1,\Gamma_2$ of $\Gamma$, and any $g\in G_\Gamma$, if $g G_{\Gamma_1}g^{-1}\subseteq G_{\Gamma_2}$, then there exists $h\in G_{\Gamma_2}$ such that $g G_{\Gamma_1}g^{-1}=h G_{\Gamma_1}h^{-1}$ (in particular $G_{\Gamma_1}\subseteq G_{\Gamma_2}$).
\end{enumerate}
\end{lemma}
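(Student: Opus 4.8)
The plan is to isolate the two genuinely substantive facts — assertion~(1) (intersections) and assertion~(6) (conjugacy of a parabolic lying inside a standard subgroup into standard position) — and to deduce the remaining four statements formally from these together with Proposition~\ref{prop:normalizer}. The only external input is the normal form theorem for right-angled Artin groups, which provides for each $g\in G_\Gamma$ a well-defined \emph{support} $\supp(g)\subseteq V\Gamma$ (the generators occurring in any reduced word for $g$, all reduced words differing by shuffles of commuting generators); equivalently, for each full subgraph $\Lambda$ the vertex map $v\mapsto v$ ($v\in\Lambda$), $v\mapsto 1$ ($v\notin\Lambda$) extends to a retraction $\rho_\Lambda\colon G_\Gamma\to G_\Lambda$, and $g\in G_\Lambda$ iff $\supp(g)\subseteq V\Lambda$. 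The ``Moreover'' clause of~(1) is then immediate: if $g\in G_{\Gamma_1}\cap G_{\Gamma_2}$ then $\supp(g)\subseteq V\Gamma_1\cap V\Gamma_2=V(\Gamma_1\cap\Gamma_2)$, so $g\in G_{\Gamma_1\cap\Gamma_2}$, and the reverse inclusion is clear. The same support principle reduces~(4) to~(6): conjugating $P_2$ to standard position we may assume $gG_{\Lambda_1}g^{-1}\subseteq G_{\Lambda_2}$, whence~(6) yields $G_{\Lambda_1}\subseteq G_{\Lambda_2}$, i.e.\ $V\Lambda_1\subseteq V\Lambda_2$ by the support principle.

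The engine is~(6). I would first reformulate it as the assertion that $gG_{\Gamma_1}g^{-1}\subseteq G_{\Gamma_2}$ forces $g\in G_{\Gamma_2}\cdot N(G_{\Gamma_1})$, where $N(G_{\Gamma_1})=G_{\Gamma_1\times\Gamma_1^\perp}$ by Proposition~\ref{prop:normalizer}; indeed, writing $g=hn$ with $h\in G_{\Gamma_2}$ and $n\in N(G_{\Gamma_1})$ gives $gG_{\Gamma_1}g^{-1}=hG_{\Gamma_1}h^{-1}$, and conversely any $h$ as in~(6) has $h^{-1}g\in N(G_{\Gamma_1})$. To prove the reformulated statement, choose $g_0$ of minimal length in the coset $G_{\Gamma_2}g$, so $g=h_0g_0$ with $h_0\in G_{\Gamma_2}$ and $g_0$ cannot be shortened on the left by $G_{\Gamma_2}$; one still has $g_0G_{\Gamma_1}g_0^{-1}\subseteq G_{\Gamma_2}$, and it suffices to show $g_0\in N(G_{\Gamma_1})$. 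For this I would analyse, for each generator $v\in\Gamma_1$, the reduced form of $g_0\,v\,g_0^{-1}$: requiring its support to lie in $\Gamma_2$ while $g_0$ admits no left shortening by $G_{\Gamma_2}$ forces the ``outer'' letters of $g_0$ to commute with every generator of $\Gamma_1$, i.e.\ to lie in $\Gamma_1^\perp$, which is exactly the condition $g_0\in N(G_{\Gamma_1})$. This normal-form bookkeeping with the shuffle relations is the main obstacle; it is carried out by Duncan--Kazachkov--Remeslennikov.

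The second core fact is the first clause of~(1), that the intersection of two parabolics is parabolic. After conjugating one of them to standard position this amounts to showing $gG_{\Lambda_1}g^{-1}\cap G_{\Lambda_2}$ is parabolic, which I would again approach through the minimal-coset/normal-form analysis underlying~(6) (this is the substance of the cited Proposition); a conceptual alternative is to pass to the $\CAT$ cube complex $\widetilde{S}_\Gamma$, where parabolics are stabilisers of convex ``standard'' subcomplexes and the intersection of two such subcomplexes is again standard.

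Granting~(1) and~(6), the remaining assertions are formal. For~(5), conjugating $P$ to standard position $G_\Lambda$ (the hypothesis and conclusion are conjugation-equivariant), the assumption $P\subseteq g(P\times P^\perp)g^{-1}$ reads $g^{-1}G_\Lambda g\subseteq G_{\Lambda\times\Lambda^\perp}$; applying~(6) gives $h\in G_{\Lambda\times\Lambda^\perp}$ with $g^{-1}G_\Lambda g=hG_\Lambda h^{-1}$, so $gh\in N(G_\Lambda)=G_{\Lambda\times\Lambda^\perp}$ by Proposition~\ref{prop:normalizer}, and hence $g=(gh)h^{-1}\in G_{\Lambda\times\Lambda^\perp}=P\times P^\perp$. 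For~(2), a strict chain $P_1\subsetneq\cdots\subsetneq P_k$ has non-decreasing types by~(4); if two consecutive types coincided, say both equal to $\Lambda$, then~(6) applied with $\Gamma_1=\Gamma_2=\Lambda$ would force the two corresponding parabolics to be equal, so the types strictly increase as subsets of the finite set $V\Gamma$, giving $k\le |V\Gamma|+1$. Finally~(3) follows from~(1) and~(2): the family of parabolics containing a given $g$ is nonempty (it contains $G_\Gamma$) and closed under intersection by~(1), and by~(2) it has a minimal element, which is unique since any two minimal members have parabolic intersection contained in both.
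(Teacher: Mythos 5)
Your proposal is correct in its logical architecture, but it is worth saying up front that the paper does not prove this lemma at all: the entire ``proof'' in the text is the attribution sentence preceding the statement, which cites Duncan--Kazachkov--Remeslennikov for (1) and (6) (and, separately, their Corollary~2.5 for (4) and (5)), and asserts that (2) and (3) ``follow.'' Your reconstruction fills in exactly the part the paper leaves implicit: the deductions of (2), (3), (4), (5) from (1), (6) and Proposition~\ref{prop:normalizer} are all valid as you write them (the coset manipulation for (5), the strict-increase-of-types argument for (2), and the intersection-plus-descending-chain argument for (3) are each correct), and like the paper you defer the genuinely combinatorial content --- the normal-form analysis behind (1) and (6) --- to the same reference. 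The one route you take that differs from the paper is deriving (4) and (5) from (6) rather than from DKR's Corollary~2.5; this is cleaner in that it reduces the external input to two facts instead of three.

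One caveat on that shortcut: as stated in the lemma, assertion (6) carries the hypothesis $\Gamma_1\subseteq\Gamma_2$, and in your deduction of (4) you invoke (6) precisely in a situation where $\Lambda_1\subseteq\Lambda_2$ is not yet known --- it is the conclusion you are after. This is not a real gap, because the version of (6) your sketch actually proves (minimal coset representative $g_0$ of $G_{\Gamma_2}g$ lies in $N(G_{\Gamma_1})$, whence $g_0G_{\Gamma_1}g_0^{-1}=G_{\Gamma_1}\subseteq G_{\Gamma_2}$) nowhere uses that hypothesis, and neither does DKR's; but you should say explicitly that you are proving the unconditional form. A second, smaller point: the ``conceptual alternative'' for (1) via $\widetilde{S}_\Gamma$ is glossed too quickly --- parabolics are stabilizers of standard subcomplexes, but the intersection of two stabilizers is not obviously the stabilizer of the intersection (which may be empty, as for two disjoint axes in a free group), so that aside would need an honest argument; since you offer it only as an alternative to the DKR citation, it does not affect the correctness of the proposal.
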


\begin{proof}
The first assertion is \cite[Proposition~2.6 and Lemma~2.7]{duncan2007parabolic}. Assertion~(2) follows from \cite[Proposition~2.6]{duncan2007parabolic}, and assertion~(3) follows from \cite[Proposition~2.8]{duncan2007parabolic}. Assertions (4), (5) and (6) follow from \cite[Corollary~2.5]{duncan2007parabolic} or \cite[Proposition~2.2]{charney2007automorphisms}.
\end{proof}

\begin{rk}
	\label{remark:parabolic in parabolic}
Every parabolic subgroup $H$ of $G_\Gamma$ is itself a right-angled Artin group, with a choice of generating set induced from the generating set of $G_\Gamma$. By Proposition~\ref{prop:normalizer}, this choice is well-defined up to conjugation by elements inside $H$. Thus it makes sense to talk about parabolic subgroups of $H$ with respect to this choice of generating set of $H$. Every parabolic subgroup of $H$ is then naturally a parabolic subgroup of $G_\Gamma$. Conversely, by Lemma~\ref{lemma:parabolics} (4) and (6), every parabolic subgroup $H'$ of $G_\Gamma$ with $H'\subseteq H$ is in fact a parabolic subgroup of $H$. Thus from now on, we will just refer to $H'$ as a parabolic subgroup, without specifying its ambient group.
\end{rk}

The smallest parabolic subgroup that contains an element $g\in G_\Gamma$ is called the \emph{support} of $g$. The \emph{type} of $g$ is then defined as the type of its support.

\begin{lemma}
	\label{lemma:setwise vs pointwise}
Let $\Gamma$ be a finite simple graph, and let $\mathbb P$ be the collection of parabolic subgroups of $G_\Gamma$, equipped with the conjugation action of $G_\Gamma$. Then for every finite set $\calf \subset \mathbb P$, the pointwise stabilizer of $\calf$ in $G_\Gamma$ coincides with the setwise stabilizer of $\calf$.
\end{lemma}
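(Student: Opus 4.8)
The plan is to show that any $g$ in the setwise stabilizer of $\calf$ already fixes each element of $\calf$ individually (the reverse inclusion being trivial). Since $g$ permutes the finite set $\calf$, it suffices to check that every $\langle g\rangle$-orbit in $\calf$ is a singleton. So I fix such an orbit, write it as $\{P_0,\dots,P_{k-1}\}$ with $gP_ig^{-1}=P_{i+1}$ (indices read modulo $k$), and aim to prove $k=1$.

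The key object is the subgroup $M:=\bigcap_{i=0}^{k-1} N(P_i)$, where $N(P_i)=P_i\times P_i^\perp$ is the normalizer of $P_i$ (Proposition~\ref{prop:normalizer} and the description of normalizers of parabolic subgroups following it). This $M$ is a parabolic subgroup by Lemma~\ref{lemma:parabolics}(1), and I claim it is normalized by $g$: indeed $gN(P_i)g^{-1}=N(gP_ig^{-1})=N(P_{i+1})$, so conjugation by $g$ merely permutes the groups $N(P_i)$ and hence fixes their intersection $M$. Moreover $g^k$ lies in $M$, since $g^kP_ig^{-k}=P_{i+k}=P_i$ shows that $g^k\in N(P_i)$ for every $i$.

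The heart of the argument is then a root-extraction step upgrading ``$g$ normalizes $M$ and $g^k\in M$'' to ``$g\in M$''. For this I use that the normalizer of the parabolic subgroup $M$ is the (internal) direct product $N(M)=M\times M^\perp$. Since $g\in N(M)$, I write $g=m\,m^\perp$ with $m\in M$ and $m^\perp\in M^\perp$ commuting; projecting the relation $g^k=m^k (m^\perp)^k\in M$ onto the $M^\perp$-factor gives $(m^\perp)^k=1$, and as $G_\Gamma$ is torsion-free this forces $m^\perp=1$, i.e.\ $g=m\in M$. In particular $g\in M\subseteq N(P_0)$, so $gP_0g^{-1}=P_0=P_1$, whence $k=1$, as desired.

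The only genuine subtlety---and the step I expect to need the most care---is this root extraction, where one must ensure that $M$ is genuinely parabolic (so that the clean normalizer description $N(M)=M\times M^\perp$ applies) and that $M\cap M^\perp=\{1\}$; the latter is built into the internal direct-product decomposition and, combined with torsion-freeness, is exactly what kills $(m^\perp)^k$. Everything else is bookkeeping with the permutation action together with the elementary identity $N(gPg^{-1})=gN(P)g^{-1}$. I note that the finiteness of $\calf$ is used only to guarantee that the $\langle g\rangle$-orbits are finite cycles, so that $g^k$ normalizes each $P_i$; in particular no induction on the complexity of $\Gamma$ is required.
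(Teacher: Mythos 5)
Your argument is correct, but it follows a genuinely different route from the paper's. The paper's proof is a three-line reduction to an external result: it observes that some power $g^k$ normalizes each $P\in\calf$, hence $g^k\in P\times P^\perp$, and then invokes \cite[Lemma~6.4]{minasyan2012hereditary} (parabolic subgroups of right-angled Artin groups are closed under taking roots) to conclude $g\in P\times P^\perp$. You avoid that citation entirely by exploiting extra structure that the general root-closedness statement does not see: because $g$ cyclically permutes the normalizers $N(P_0),\dots,N(P_{k-1})$, it normalizes their intersection $M$, which is again parabolic by Lemma~\ref{lemma:parabolics}(1), so $g$ lands in the internal direct product $M\times M^\perp$ where the root extraction can be done by hand -- projecting $g^k\in M$ to the $M^\perp$-factor, using $M\cap M^\perp=\{1\}$ and torsion-freeness of $G_\Gamma$. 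All the ingredients you use (Proposition~\ref{prop:normalizer} and the ensuing description $N(P)=P\times P^\perp$, closure of parabolics under intersection, torsion-freeness) are available in or standard alongside the paper, so your proof is self-contained where the paper's is not; the price is that your argument is specific to elements normalizing the relevant parabolic, whereas Minasyan's lemma gives the stronger general fact that $g^k\in P$ forces $g\in P$ for any parabolic $P$, with no normalization hypothesis -- a fact the paper reuses elsewhere (e.g.\ implicitly in the proof of Lemma~\ref{lemma:amenable-normal-maximal} and Lemma~\ref{lem:fix point}, which cite the present lemma in situations that reduce to it). One small point worth making explicit in your write-up: the uniqueness of the decomposition $g=m\,m^\perp$ (equivalently $M\cap M^\perp=\{1\}$) is what lets you read off $(m^\perp)^k=1$ from $g^k=m^k(m^\perp)^k\in M$, and this holds because $\Lambda$ and $\Lambda^\perp$ are disjoint subgraphs, so $G_\Lambda\cap G_{\Lambda^\perp}=G_\emptyset=\{1\}$ by Lemma~\ref{lemma:parabolics}(1).
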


\begin{proof}
Let $P$ be a parabolic subgroup in the set $\calf$. Take $g\in G_\Gamma$ with $g\calf=\calf$. Then there exists $k\neq 0$ such that $g^k$ normalizes $P$, so $g^k\in P\times P^\perp$.  Hence the subgroup $\langle g\rangle$ has a finite-index subgroup contained in the parabolic subgroup $P\times P^\perp$. By \cite[Lemma~6.4]{minasyan2012hereditary}, we have $\langle g\rangle\subseteq P\times P^\perp$. Hence $g$ fixes $P$ and the lemma follows.
\end{proof}

\paragraph*{Automorphisms.} Let $\Gamma$ be a finite simple graph. Laurence \cite{laurence1995generating} and Servatius \cite{Ser} showed that the outer automorphism group $\Out(G_\Gamma)$ is generated by the outer classes of four types of automorphisms, namely \emph{inversions}, \emph{graph automorphisms}, \emph{transvections} and \emph{partial conjugations}. We say that $\Gamma$ (or $G_\Gamma$) is \emph{transvection-free} if $\Out(G_\Gamma)$ does not contain any transvection; equivalently, there do not exist distinct vertices $v,w\in\Gamma$ such that $\lk(w)\subseteq \st(v)$. When $\Gamma$ is transvection-free, the set of parabolic subgroups of $G_\Gamma$ does not depend on the choice of standard generating set of $G_\Gamma$. It also follows from the work of Laurence and Servatius that $\Out(G_\Gamma)$ is finite if and only if $\Gamma$ is transvection-free and does not contain any separating star.

\paragraph*{The Salvetti complex.} Every right-angled Artin group $G_\Gamma$ is the fundamental group of a locally CAT(0) cube complex $S_\Gamma$, called the \emph{Salvetti complex}, obtained from a bouquet of $|V\Gamma|$ circles by gluing a $k$-cube per $k$-clique of $\Gamma$ (recall that a \emph{$k$-clique} is a complete subgraph on $k$ vertices). We refer to \cite[Section~3.6]{charney2007introduction} for more details. The 2-skeleton of $S_\Gamma$ is the presentation complex of $G_\Gamma$. For each full subgraph $\Lambda\subseteq \Gamma$, there is an isometric embedding $S_{\Lambda}\hookrightarrow S_\Gamma$. Let $\widetilde{S}_\Gamma$ be the universal cover of $S_\Gamma$, which is a CAT(0) cube complex. A \emph{standard subcomplex} of $\widetilde{S}_\Gamma$ of type $\Lambda$ is a connected component of the inverse image of $S_{\Lambda}\subseteq S_\Gamma$ with respect to the covering map $\widetilde{S}_\Gamma\to S_\Gamma$. In particular, a standard subcomplex whose type is the empty subgraph is the same as a vertex of $\widetilde{S}_\Gamma$. We collect several standard facts on $\widetilde{S}_\Gamma$.
\begin{enumerate}
	\item Two standard subcomplexes of the same type are either disjoint or equal. If a standard subcomplex of type $\Lambda_1$ and a standard subcomplex of type $\Lambda_2$ have nonempty intersection, then their intersection is a standard subcomplex of type $\Lambda_1\cap\Lambda_2$.
	\item The stabilizer of a standard subcomplex is a parabolic subgroup. Each parabolic subgroup can be realized as the stabilizer of some (non-unique) standard subcomplex. 
\end{enumerate}

\subsection{Rigidity of the extension graph}\label{sec:rigidity}

Let $\Gamma$ be a connected finite simple graph. The \emph{extension graph} of $\Gamma$, denoted $\Gamma^e$, was defined by Kim and Koberda \cite{kim2013embedability} to be the graph whose vertices are the parabolic subgroups of $G_\Gamma$ isomorphic to $\mathbb Z$, where two vertices are adjacent if the corresponding parabolic subgroups commute. As follows from Proposition~\ref{prop:normalizer} and Lemma~\ref{lemma:parabolics}(6), two distinct cyclic parabolic subgroups $P_1=g_1\langle v_1\rangle g_1^{-1}$ and $P_2=g_2\langle v_2\rangle g_2^{-1}$ commute if and only if $v_1$ and $v_2$ are adjacent in $\Gamma$ and there exists $g\in G_\Gamma$ such that for every $i\in\{1,2\}$, one has  $P_i=g\langle v_i\rangle g^{-1}$. The conjugation action of $G_\Gamma$ on itself induces an action $G_\Gamma\actson \Gamma^e$. The above observation about adjacency can be rephrased by saying that $\Gamma$ is a fundamental domain for the $G_\Gamma$-action on $\Gamma^e$. 

Now we recall the following rigidity result on extension graphs, which is a combination of  \cite[Corollary 4.16 and Lemma 4.17]{Hua}.

\begin{theo}[\cite{Hua}]\label{theo:rigidity}
Let $\Gamma_1$ and $\Gamma_2$ be two finite simple graphs, and assume that for every $i\in\{1,2\}$, the group $\Out(G_{\Gamma_i})$ is finite.

Then $\Gamma_1^e$ and $\Gamma_2^e$ are isomorphic if and only if $\Gamma_1$ and $\Gamma_2$ are isomorphic.
\end{theo}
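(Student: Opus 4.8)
The plan is to prove the nontrivial (``only if'') implication by showing that the defining graph $\Gamma$ can be recovered from $\Gamma^e$ as a \emph{canonical quotient}, in a way that is automatically preserved by any graph isomorphism $\Gamma_1^e\to\Gamma_2^e$. The reverse implication is immediate: an isomorphism $\Gamma_1\to\Gamma_2$ induces an isomorphism $G_{\Gamma_1}\to G_{\Gamma_2}$ carrying cyclic parabolic subgroups to cyclic parabolic subgroups and preserving commutation, hence a graph isomorphism $\Gamma_1^e\to\Gamma_2^e$.

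For the forward direction, recall that the vertices of $\Gamma^e$ are the cyclic parabolic subgroups of $G_\Gamma$, that adjacency is commutation, and that $\Gamma$ is a fundamental domain for the conjugation action $G_\Gamma\actson\Gamma^e$. This gives a \emph{type map} $t\colon V(\Gamma^e)\to V\Gamma$ sending each cyclic parabolic to the standard generator it is conjugate to; its fibers are exactly the $G_\Gamma$-orbits, and the quotient graph $\Gamma^e/G_\Gamma$ (vertices the types, with an edge between two types whenever some pair of representatives is adjacent) is canonically isomorphic to $\Gamma$ by the adjacency criterion for commuting cyclic parabolics recalled above. Consequently it suffices to prove that any graph isomorphism $f\colon\Gamma_1^e\to\Gamma_2^e$ carries the $G_{\Gamma_1}$-orbit partition to the $G_{\Gamma_2}$-orbit partition; granting this, $f$ descends to a bijection $\bar f\colon V\Gamma_1\to V\Gamma_2$ which, preserving the quotient adjacency, is the desired isomorphism $\Gamma_1\to\Gamma_2$. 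Taking $\Gamma_1=\Gamma_2$, the claim amounts to saying that the type partition of $V(\Gamma^e)$ is invariant under the full automorphism group of the abstract graph $\Gamma^e$.

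The main structural input I would use is a recursive description of links. For a vertex $u\in\Gamma^e$ with cyclic parabolic $Z_u$ of type $v$, its centralizer splits as $Z_u\times Z_u^{\perp}$ with $Z_u^{\perp}$ a parabolic subgroup of type $\lk(v)$ (Proposition~\ref{prop:normalizer}). Since a cyclic parabolic has a single vertex as its type, and $v$ is central in $G_{\{v\}\circ\lk(v)}$, the only type-$v$ cyclic parabolic in the centralizer is $Z_u$ itself; hence the neighbours of $u$ in $\Gamma^e$ are exactly the cyclic parabolics of $Z_u^{\perp}$, and by Remark~\ref{remark:parabolic in parabolic} and Lemma~\ref{lemma:parabolics}(6) the induced subgraph on them is canonically the extension graph $\lk(v)^e$ of the right-angled Artin group $Z_u^{\perp}\cong G_{\lk(v)}$. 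Thus the isomorphism type of the link of $u$ depends only on $t(u)=v$, which is the basic mechanism making types ``visible'' to the graph structure. I would then define a candidate type relation $\approx$ combinatorially from this local data — declaring $u\approx u'$ when their links, together with the way finite neighbourhoods attach to them, match — and argue that $\approx$ is exactly the $G_\Gamma$-orbit relation.

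The hard part is precisely to show that this combinatorially defined relation neither splits a single type nor merges two distinct types, and this is where finiteness of $\Out(G_\Gamma)$ is essential. Transvection-freeness (no distinct $v,w$ with $\lk(w)\subseteq\st(v)$) guarantees, via the remark in Section~\ref{sec:review}, that the family of parabolic subgroups — and hence $\Gamma^e$ together with its type structure — is independent of the chosen standard generating set, and it rules out the degenerate configuration in which the link of a type-$v$ vertex sits inside that of a type-$w$ vertex in a way that would let an automorphism collapse two types. The absence of a separating star (the remaining half of finiteness of $\Out$) is what I expect to be needed to eliminate the last exceptional gluings that could destroy invariance of the partition, so that the $G_\Gamma$-orbit partition becomes the unique partition compatible with the local link data. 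Carrying out this final step rigorously — extracting from the purely combinatorial link-and-adjacency data an invariant that pins down conjugacy of cyclic parabolics — is the crux, and is exactly the content imported from \cite[Lemma~4.12, Corollary~4.16 and Lemma~4.17]{Hua}.
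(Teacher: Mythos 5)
The paper does not actually prove this statement: Theorem~\ref{theo:rigidity} is imported wholesale as a combination of \cite[Lemma~4.12, Corollary~4.16 and Lemma~4.17]{Hua}, so there is no in-paper argument to compare yours against. Within your proposal, the trivial direction, the identification of the quotient $\Gamma^e/G_\Gamma$ with $\Gamma$, and the computation that the link of a vertex $u$ of type $v$ in $\Gamma^e$ is canonically $(\lk(v))^e$ (via Proposition~\ref{prop:normalizer}, the centrality of $v$ in $G_{\st(v)}$, and Lemma~\ref{lemma:parabolics}(6) together with Remark~\ref{remark:parabolic in parabolic}) are all correct.

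The genuine gap is that the entire mathematical content of the theorem is concentrated in the one step you do not carry out: showing that the $G_\Gamma$-orbit (type) partition of $V\Gamma^e$ is determined by the abstract graph $\Gamma^e$, i.e.\ that any isomorphism $\Gamma_1^e\to\Gamma_2^e$ carries blocks to blocks. Your candidate relation $\approx$ is never actually defined (links alone cannot suffice, since distinct types may have isomorphic links, and one must also rule out an automorphism splitting a single orbit), and the roles you assign to transvection-freeness and to the absence of separating stars are explicitly labelled as guesses rather than arguments. Note moreover that your reduction is to a statement at least as strong as the theorem itself, so nothing is gained until that step is supplied; you acknowledge this by deferring precisely to the results of \cite{Hua} that the paper itself cites. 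As a reconstruction of the intended mechanism the outline is sensible and does not conflict with anything in the paper, but as a standalone proof it establishes only the easy implication.
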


In particular, Theorem~\ref{theointro:1} from the introduction follows from Theorem~\ref{theointro:main}.

\subsection{A lemma about transvection-free right-angled Artin groups}\label{sec:transvection-free}

In the sequel of the paper, one way in which the transvection-freeness assumption will be used is through the following easy lemma.

\begin{lemma}\label{lemma:transvection-free}
Let $G=G_\Gamma$ be a transvection-free right-angled Artin group. Let $Z$ be a cyclic parabolic subgroup of $G$, and let $P$ be a nontrivial parabolic subgroup of $G$ such that $Z\times Z^{\perp}\subseteq P\times P^{\perp}$.
\begin{enumerate}
\item If $P$ is cyclic, then $Z=P$. 
\item If $P$ is noncyclic, then $P\cap Z^{\perp}$ is nonabelian.
\end{enumerate}
\end{lemma}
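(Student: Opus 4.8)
The plan is to conjugate the pair $(Z,P)$ into a standard position and then reduce both assertions to a combinatorial statement about $\Gamma$ whose failure produces a transvection. Since $P$ is parabolic, after conjugating the whole configuration I may assume $P=G_\Lambda$ is standard, so that $P^\perp=G_{\Lambda^\perp}$ and $P\times P^\perp=G_{\Lambda\circ\Lambda^\perp}$, the last equality holding because every vertex of $\Lambda^\perp$ is joined to every vertex of $\Lambda$. The hypothesis gives $Z\subseteq P\times P^\perp=G_{\Lambda\circ\Lambda^\perp}$, so by Remark~\ref{remark:parabolic in parabolic} the cyclic parabolic $Z$ is a cyclic parabolic subgroup of $G_{\Lambda\circ\Lambda^\perp}$, hence conjugate via some $h\in G_{\Lambda\circ\Lambda^\perp}$ to a standard generator. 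Because $h$ normalizes $P$ (it lies in the normalizer $P\times P^\perp$) and the assignment $P\mapsto P^\perp$ is canonical, conjugating by $h$ keeps both $P$ and $P^\perp$ standard; I may thus further assume $Z=\langle u\rangle$ with $u$ a vertex of $\Lambda\circ\Lambda^\perp$, that is, $u\in V\Lambda$ or $u\in V\Lambda^\perp$. With this normalization $Z^\perp=G_{\lk(u)}$ and $Z\times Z^\perp=G_{\st(u)}$, and Lemma~\ref{lemma:parabolics}(1) turns the hypothesis $G_{\st(u)}\subseteq G_{\Lambda\circ\Lambda^\perp}$ into the purely graph-theoretic inclusion $\lk(u)\subseteq V\Lambda\cup V\Lambda^\perp$.

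Two facts will drive the rest. First, transvection-freeness means there are no distinct vertices $x,y$ with $\lk(x)\subseteq\st(y)$, so exhibiting such a pair is how I reach a contradiction. Second, every vertex of $\Lambda^\perp$ is adjacent to every vertex of $\Lambda$, whence $\Lambda^\perp\subseteq\lk(u)$ when $u\in\Lambda$, and $\Lambda\subseteq\lk(u)$ when $u\in\Lambda^\perp$. I then split according to the dichotomy $u\in\Lambda$ versus $u\in\Lambda^\perp$.

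Suppose first $u\in\Lambda^\perp$. Then $\Lambda\subseteq\lk(u)$, so $P\cap Z^\perp=G_\Lambda\cap G_{\lk(u)}=G_\Lambda=P$ by Lemma~\ref{lemma:parabolics}(1). If $P$ is noncyclic and nonabelian (i.e.\ $\Lambda$ is not a clique), this already yields the conclusion of (2). In every remaining possibility $\Lambda$ is a clique; this covers assertion (1), where $\Lambda$ is a single vertex, as well as the only way assertion (2) could fail. In these cases I fix any $a\in\Lambda$: each vertex of $\lk(u)\subseteq V\Lambda\cup V\Lambda^\perp$ is either in the clique $\Lambda$ (hence equal or adjacent to $a$) or in $\Lambda^\perp$ (hence adjacent to $a$), so $\lk(u)\subseteq\st(a)$ with $u\neq a$, contradicting transvection-freeness. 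Thus $u\in\Lambda^\perp$ is incompatible with $\Lambda$ being a clique, which disposes of this case for both assertions.

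Suppose now $u\in\Lambda$, so $\Lambda^\perp\subseteq\lk(u)$, $\lk(u)=(\lk(u)\cap\Lambda)\cup\Lambda^\perp$, and $P\cap Z^\perp=G_{\lk(u)\cap\Lambda}$. For assertion (1), $P$ cyclic forces $\Lambda=\{u\}$, hence $Z=P$. For assertion (2), suppose for contradiction that $P\cap Z^\perp$ is abelian, i.e.\ $\lk(u)\cap\Lambda$ is a clique; since $P$ is noncyclic I may choose $w\in\Lambda\setminus\{u\}$, taking $w\in\lk(u)\cap\Lambda$ when this set is nonempty and $w$ arbitrary otherwise. In either case every vertex of $\lk(u)\cap\Lambda$ is equal or adjacent to $w$, and every vertex of $\Lambda^\perp$ is adjacent to $w$ because $w\in\Lambda$; hence $\lk(u)\subseteq\st(w)$ with $u\neq w$, again a transvection, a contradiction. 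Therefore $P\cap Z^\perp$ is nonabelian, proving (2). The step I expect to demand the most care is the normalization of the first paragraph, namely placing $Z$ in standard position while simultaneously keeping $P$ and $P^\perp$ standard; once that is secured, each failing scenario reduces to checking that a well-chosen vertex $w$ satisfies $\lk(u)\subseteq\st(w)$, which is routine given $\lk(u)\subseteq V\Lambda\cup V\Lambda^\perp$ and the adjacency property of $\Lambda^\perp$.
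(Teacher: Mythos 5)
Your proof is correct and follows essentially the same route as the paper's: conjugate so that $P\times P^\perp$ and then $Z$ are standard, and in each scenario that would violate the conclusion exhibit a vertex $w\neq u$ with $\lk(u)\subseteq\st(w)$, contradicting transvection-freeness. The only difference is organizational: the paper packages the combinatorics via the de Rham decomposition of $\lk(u)$ (each irreducible factor is a non-clique lying entirely in $\Lambda$ or in $\Lambda^\perp$), whereas you split directly on $u\in\Lambda$ versus $u\in\Lambda^\perp$ and choose $w$ by hand -- both arguments are sound.
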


\begin{proof}
Up to conjugation, we can assume $P$ is standard, hence $P\times P^\perp$ is also standard. As $Z\times Z^\perp$ can be viewed as a parabolic subgroup of $P\times P^\perp$ (see Remark~\ref{remark:parabolic in parabolic}), up to conjugating $Z\times Z^\perp$ by an element in $P\times P^\perp$, we can assume without loss of generality that both $Z$ and $Z\times Z^\perp$ are standard as such conjugation brings $P$ to itself.
	
 Let $\Gamma_P\subseteq\Gamma$ be the type of $P$. Then the type of $P^\perp$ is $(\Gamma_P)^\perp$. Let $v_Z$ be the vertex of $\Gamma$ which is the type of $Z$. As $Z\times Z^{\perp}\subseteq P\times P^{\perp}$, we have $\st(v_Z)\subseteq \Gamma_P\circ (\Gamma_P)^\perp$. 

Let $\lk(v_Z)=\Gamma_1\circ\Gamma_2\circ\cdots\circ\Gamma_k$ be the de Rham decomposition of $\lk(v_Z)$, which induces $Z^{\perp}=Q_1\times\dots \times Q_k$. The transvection-free condition implies that no $\Gamma_i$ is a clique (in particular no $\Gamma_i$ is reduced to one vertex), as otherwise the link of $v_Z$ would be contained in the star of every vertex of $\Gamma_i$. Thus, each $Q_i$ is nonabelian. As each $\Gamma_i$ is irreducible, either $\Gamma_i\subseteq\Gamma_P$ or $\Gamma_i\subseteq (\Gamma_{P})^\perp$. 

If $P$ is cyclic, then $\Gamma_P$ is reduced to a vertex $v_P$, no $\Gamma_i$ can be contained in $\Gamma_P$, and we deduce that $\lk(v_Z)\subseteq (\Gamma_P)^\perp$. So $\lk(v_Z)$ is contained in the star of $v_P$, and the transvection-free condition thus implies that $\{v_Z\}=\Gamma_P$. It follows that $Z=P$ as they are both standard, showing that the first assertion of the lemma holds.

If $P$ is noncyclic, then $\lk(v_Z)\nsubseteq (\Gamma_P)^\perp$, otherwise we have $\lk(v_Z)\subseteq \st(w)$ for every vertex $w\in\Gamma_P$. Thus at least one $\Gamma_i$ is contained in $\Gamma_P$. Therefore $\Gamma_P\cap\lk(v_Z)$ contains an irreducible graph with at least two vertices, and the second assertion of the lemma follows.
\end{proof}

\subsection{Full support subgroups}\label{sec:full-support}

\begin{prop}\label{prop:kim-koberda}
Let $\Gamma$ be a finite simple graph whose de Rham decomposition has no clique factor. Then $G_\Gamma$ contains a nonabelian free subgroup $F$ such that no nontrivial element of $F$ is contained in a proper parabolic subgroup of $G_\Gamma$.
\end{prop}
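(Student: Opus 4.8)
The plan is to reduce to the case of a single irreducible de~Rham factor and then to produce the free subgroup inside it by ping-pong on a finite family of Bass--Serre trees. Since the de~Rham decomposition has no clique factor, I may write $\Gamma=\Gamma_1\circ\cdots\circ\Gamma_k$ with each $\Gamma_i$ irreducible and distinct from a clique, so that each $\Gamma_i$ is non-complete with at least two vertices, and $G_\Gamma=G_{\Gamma_1}\times\cdots\times G_{\Gamma_k}$. A full subgraph of a join is the join of its intersections with the factors, so every standard parabolic of $G_\Gamma$ is a product of standard parabolics of the $G_{\Gamma_i}$; since conjugation respects the direct product, every parabolic of $G_\Gamma$ is of the form $\prod_i Q_i$ with $Q_i$ a parabolic of $G_{\Gamma_i}$. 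By Lemma~\ref{lemma:parabolics}(3) the support of $(g_i)_i$ is then $\prod_i\supp(g_i)$, so $(g_i)_i$ has full support in $G_\Gamma$ if and only if each $g_i$ has full support in $G_{\Gamma_i}$. It therefore suffices to find, in each $G_{\Gamma_i}$, a rank-two free subgroup $\langle x_i,y_i\rangle$ all of whose nontrivial elements have full support: the diagonally embedded $F=\langle (x_i)_i,(y_i)_i\rangle$ is then free of rank two (each coordinate projection restricts to an isomorphism onto $\langle x_i,y_i\rangle$), and every nontrivial element of $F$ has all coordinates nontrivial and of full support, hence full support in $G_\Gamma$.

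From now on I assume $\Gamma$ irreducible, non-complete, with at least two vertices, and I use the following characterization of full support. Every proper parabolic subgroup lies in a conjugate of a maximal standard parabolic $M_v:=G_{\Gamma\setminus\{v\}}$ (choose a vertex $v$ outside its type), so an element has full support if and only if it is conjugate into no $M_v$. For each vertex $v$ I consider the visual splitting $G_\Gamma=G_{\st(v)}\ast_{G_{\lk(v)}}G_{\Gamma\setminus\{v\}}$; this is a genuine amalgam because there is no edge from $v$ to $\Gamma\setminus\st(v)$, and it is nontrivial because irreducibility forces $\st(v)\subsetneq\Gamma$ and $\lk(v)\subsetneq\Gamma\setminus\{v\}$. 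Let $T_v$ be the associated Bass--Serre tree. An element is elliptic on $T_v$ exactly when it is conjugate into $G_{\st(v)}$ or into $M_v$, both proper parabolics; hence an element has full support if and only if it acts loxodromically on $T_v$ for \emph{every} vertex $v$.

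To build the free group I start from $g_0=v_1\cdots v_n$, a product in which each standard generator occurs once. Since $g_0$ has $v$-exponent sum $1$ for every $v$, and the corresponding homomorphism $G_\Gamma\to\mathbb Z$ vanishes on every conjugate of $M_v$, the element $g_0$ is conjugate into no $M_v$ and thus has full support; by the previous paragraph it is loxodromic on every $T_v$, with endpoint pair $\{\xi_v^+,\xi_v^-\}\subseteq\partial T_v$. Both amalgam indices of the splitting are infinite, so the $G_\Gamma$-action on each $T_v$ is non-elementary and each end stabilizer has infinite index in $G_\Gamma$. The set of conjugators $s$ for which the axes of $g_0$ and $sg_0s^{-1}$ share an endpoint in some $T_v$ is a finite union of cosets of these infinite-index end stabilizers, so by B.~H.~Neumann's lemma it is a proper subset of $G_\Gamma$; I pick $s$ outside it. Then in every $T_v$ the loxodromics $g_0$ and $sg_0s^{-1}$ have four distinct endpoints, and a standard ping-pong argument shows that for $N$ large enough (one $N$ works for the finitely many trees) the subgroup $F=\langle g_0^N,\,sg_0^Ns^{-1}\rangle$ is free of rank two and acts as a Schottky group on each $T_v$. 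In a Schottky group every nontrivial element is loxodromic, so every nontrivial element of $F$ is loxodromic on every $T_v$, and hence has full support.

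The main obstacle is handling the whole family $\{T_v\}_{v\in V\Gamma}$ simultaneously: I must choose a single conjugator $s$ making the two loxodromics independent at once in all $n$ trees, and a single exponent $N$ working in all of them. The former is where the real content lies—it rests on the non-elementarity of each $T_v$-action (guaranteed by the infinite amalgam indices) and the resulting infinite index of end stabilizers, which is exactly what lets B.~H.~Neumann's lemma apply—while the latter is routine once independence is secured, by taking the maximum of the finitely many ping-pong thresholds.
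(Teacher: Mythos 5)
Your proof is correct, but the route through the irreducible case is genuinely different from the paper's. Both arguments start with the same reduction (parabolic subgroups of a join split as products of parabolics of the factors, so one treats each irreducible de~Rham factor and diagonally embeds), but the paper then works geometrically in the Salvetti complex: using connectivity of the complement graph $\Gamma^c$ --- which is where irreducibility enters for them --- it builds two words each using every generator, so that the corresponding wedge of two circles maps to $S_\Gamma$ by a local isometry; freeness then comes from non-positive curvature, and full support comes from the fact that every axis crosses hyperplanes of every label and therefore cannot stay in a finite neighbourhood of a proper standard subcomplex. You instead exhaust the proper parabolics by the vertex groups of the visual splittings $G_{\st(v)}\ast_{G_{\lk(v)}}G_{\Gamma\setminus\{v\}}$ (irreducibility entering through $\st(v)\subsetneq\Gamma$), detect full support as simultaneous loxodromicity on the finitely many Bass--Serre trees $T_v$, certify it for the product of all generators by an exponent-sum computation, and then run a simultaneous Schottky argument, with a single conjugator produced by B.~H.~Neumann's covering lemma and a single power $N$. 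The paper's argument is more self-contained (no Bass--Serre theory, no Schottky lemma, no covering lemma) and produces explicit generators; yours is more modular and would transport to any group equipped with finitely many splittings whose vertex groups absorb all proper parabolic subgroups. The one step you should spell out is that the stabilizers of the ends $\xi_v^{\pm}$ really have infinite index: if $\mathrm{Stab}(\xi_v^+)$ had finite index, a finite-index normal subgroup would fix $\xi_v^+$; since that subgroup contains a power of $g_0$, conjugating shows the $G_{\Gamma_i}$-orbit of $\xi_v^+$ is contained in $\{\xi_v^+,\xi_v^-\}$, contradicting non-elementarity. This is standard, but it does not follow \emph{verbatim} from the amalgam indices being infinite; with that inserted, the argument is complete.
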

\begin{proof}
We claim that if $\Gamma=\Gamma_1\circ\Gamma_2$, then each parabolic subgroup $P$ of $G_\Gamma$ splits as a product $P_1\times P_2$ where $P_i=P\cap G_{\Gamma_i}$ is a parabolic subgroup of $G_{\Gamma_i}$. This is clearly true if $P$ is standard, otherwise there is $g=g_1g_2\in G_{\Gamma}$ ($g_i\in G_{\Gamma_i}$) such that $gPg^{-1}$ is standard. On the other hand, $gG_{\Gamma_i}g^{-1}=g_i G_{\Gamma_i} g^{-1}_i=G_{\Gamma_i}$. Thus the claim follows. This claim implies that it suffices to prove the lemma when $\Gamma$ is not a join. 

In what follows, we are equipping $\widetilde{S}_\Gamma$ with the standard $\mathrm{CAT}(0)$ metric. 
We will use the following simple observation: if a nontrivial element $g$ belongs to a parabolic subgroup of type $\Gamma'$, then it has an axis (with respect to the action $G_\Gamma\actson \widetilde{S}_\Gamma$) contained in a standard subcomplex of type $\Gamma'$, hence every axis of $g$ is contained in a finite neighborhood of this standard subcomplex (see \cite[Chapter II.6]{bridson2013metric} for basic properties of axes).

Let $X$ be the wedge of two circles $C_1$ and $C_2$, with the wedge point denoted by $x_0$. We will choose two words $W_1$ and $W_2$ in the standard generating set $V\Gamma$ such that 
\begin{enumerate}
	\item[(1)] each $W_i$ uses all the generators of $G_\Gamma$;
	\item[(2)] the map $\varphi:X\to S_\Gamma$ defined by mapping $x_0$ to the base vertex of $S_\Gamma$ and mapping each $C_i$ to the edge path in the $1$-skeleton of $S_\Gamma$ corresponding to the word $W_i$ is a local isometry ($C_i$ is metricized so that its length is equal to the word length of $W_i$).
\end{enumerate}
Then $\varphi_\ast(\pi_1 X)$ is the free subgroup satisfying our requirement, as (2) implies that any lift $\tilde \varphi:\widetilde X\to \widetilde{S}_\Gamma$ to the universal covers maps an axis of a nontrivial element $g\in \pi_1 X$ to an axis $\ell$ of $\varphi_\ast(g)$, and (1) implies that $\ell$ is not contained in any finite neighborhood of any proper standard subcomplex.

Let $\Gamma^c$ be the complement graph of $\Gamma$ (i.e.\ these two graphs have the same vertex set, two vertices in $\Gamma^c$ are adjacent if they are not adjacent in $\Gamma$). As $\Gamma$ is not a join, $\Gamma^c$ is connected. Let $u_0,v_0$ be two adjacent vertices in $\Gamma^c$. For each vertex $v\in \Gamma^c$, let $p_v$ an edge path traveling from $v_0$ to $v$ then back to $v_0$ in $\Gamma^c$. Listing consecutive vertices in $p_v$ leads to a word $W_v$ starting with $v_0$ and ending with $v_0$. Let $W$ be the product (in any order) of all $W_v$ with $v$ ranging over all vertices of $\Gamma^c$. Then the words $W_1=v_0Wu_0$ and $W_2=v^{-1}_0u_0Wu^{-1}_0$ satisfy the above two conditions.
\end{proof}

\begin{rk}
Proposition~\ref{prop:kim-koberda} can also be proved by showing that, when $\Gamma$ does not decompose nontrivially as a join, the group $G_\Gamma$ acts nonelementarily on the graph of parabolic subgroups of $G_\Gamma$ -- having one vertex per proper parabolic subgroup of $G_\Gamma$, where two such subgroups are joined by an edge whenever they have nontrivial intersection -- which turns out to be hyperbolic. We decided to provide an elementary proof that does not rely on a hyperbolicity statement. 
\end{rk}

\section{The Roller boundary of the Salvetti complex} 

The goal of the present section is to prove Theorem~\ref{theointro:roller} from the introduction (Theorem~\ref{theo:roller} below). We start with a short review on Roller boundaries of CAT(0) cube complexes.

\subsection{Background on the Roller boundary}\label{sec:background-roller} 

\paragraph*{General background.} The Roller boundary, implicit in the work of Roller \cite{Rol} and explicitly introduced in \cite{BCGNW}, gives a way of compactifying any CAT(0) cube complex. We refer to \cite{sageev2012cat} for background on CAT(0) cube complexes, including a discussion on hyperplanes and halfspaces. We now review the definition and a few facts about the Roller boundary.

Let $X$ be a CAT(0) cube complex. Given a subset $Z\subseteq X$, we let $\calh(Z)$ be the set of all hyperplanes of $X$ that intersect $Z$ nontrivially. Let $\mathfrak{H}$ be the set of all halfspaces of $X$ (the boundary hyperplane of a halfspace $h$ will be denoted $\partial h$). There is an embedding of the vertex set $V(X)$ into $\{0,1\}^{\mathfrak{H}}$ (equipped with the product topology), sending a vertex $v$ to the map $\mathfrak{H}\to\{0,1\}$ that sends a halfspace $h$ to $1$ if and only if $v\in h$. The closure of the image of this embedding yields a compactification of $V(X)$. The \emph{Roller boundary} of $X$, which we denote by $\partial_RX$, is the complement of $V(X)$ in this compactification: it is compact whenever $X$ is locally compact. A point $\xi\in\partial_R X$ is thus a map $\mathfrak{H}\to\{0,1\}$, and we let $U_\xi\subseteq\mathfrak{H}$ be the set of all halfspaces sent to $1$ under this map. Let $Y\subseteq X$ be a convex subcomplex. Then there is a continuous embedding $\partial_RY\to \partial_R X$ such that a point $\xi\in\partial_R X$ lies in the image of this embedding if and only if it corresponds to a map $\mathfrak{H}\to\{0,1\}$ that sends every halfspace containing $Y$ to $1$. Thus we will identify $\partial_R Y$ with a closed subset of $\partial_R X$.

When $X$ has countably many hyperplanes (e.g.\ when $X=\widetilde{S}_\Gamma$ is the universal cover of the Salvetti complex associated to a finite simple graph $\Gamma$), the space $\{0,1\}^{\mathfrak{H}}$ is metrizable, so $\partial_RX$ is metrizable. 

\paragraph*{Combinatorial geodesic rays and the Roller boundary.} We denote by $X^{(1)}$ the $1$-skeleton of $X$, equipped with the path metric. Geodesic rays in $X^{(1)}$ are called \emph{combinatorial geodesic rays}. Let $x\in X$ be a vertex, and let $r$ be a combinatorial geodesic ray in $X^{(1)}$ originating at $x$. For every hyperplane $\mathfrak{h}$, the ray $r$ selects exactly one of the two halfspaces complementary to $\mathfrak{h}$, by considering the halfspace which virtually contains $r$ (i.e.\ contains $r$ up to a finite segment). Thus $r$ defines a point in $\partial_R X$. Two combinatorial geodesic rays are \emph{equivalent} if they cross the same set of hyperplanes. There is a 1-1 correspondence between equivalence classes of combinatorial geodesic rays based at a given point $x\in X$, and points in $\partial_R X$ (see e.g.\ \cite[Section~A.2]{genevois2020contracting}).

Now suppose that two combinatorial geodesic rays $r$ and $r'$ (potentially with different origins $x$ and $x'$) represent the same point of $\partial_R X$. By definition of $\partial_R X$, a halfspace of $X$ virtually contains $r$ if and only if it virtually contains $r'$. Also, every hyperplane $\mathfrak{h}$ contained in the symmetric difference $\calh(r)\Delta\calh(r')$ separates $x$ from $x'$. In particular $\calh(r)\Delta\calh(r')$ is finite. 

\paragraph*{Regular points in the Roller boundary.} Two hyperplanes $\mathfrak{h}_1$ and $\mathfrak{h}_2$ of $X$ are \emph{strongly separated} \cite[Definition~2.1]{behrstock2012divergence} if there does not exist any hyperplane $\mathfrak{h}$ such that for every $i\in\{1,2\}$, one has $\mathfrak{h}\cap\mathfrak{h}_i\neq\emptyset$. The notion of a regular point of the Roller boundary, introduced by Fern\'os in \cite[Definition~7.3]{Fer}, is defined as follows: a point $\xi\in\partial_R X$ is \emph{regular} if given any two halfspaces $h_1,h_2\in U_\xi$, there exists a halfspace $k\in U_\xi$ such that $k\subseteq h_1\cap h_2$ and for every $i\in\{1,2\}$, the hyperplanes $\partial k$ and $\partial h_i$ are strongly separated. We denote by $\partial_{\reg}X$ the subspace of $\partial_R X$ made of regular points. 

\subsection{Non-regular points in the Roller boundary of the Salvetti complex}

Let $\Gamma$ be a finite simple graph. We denote by $\mathbb{S}_J$ the set of all standard subcomplexes of $\widetilde{S}_\Gamma$ that are contained in some join standard subcomplex of $\widetilde{S}_\Gamma$, i.e.\ a standard subcomplex whose type admits a nontrivial join decomposition. The goal of the present section is to prove the following theorem.

\begin{theo}\label{theo:roller}
Let $\Gamma$ be a finite simple graph. Then $\partial_\reg\widetilde{S}_\Gamma$ is a Borel subset of $\partial_R\widetilde{S}_\Gamma$, and there exists a unique Borel $G_\Gamma$-equivariant map $$\Phi:\partial_R\widetilde{S}_\Gamma\setminus\partial_\reg\widetilde{S}_\Gamma\to\mathbb{S}_J$$ such that for every nonregular point $\xi\in\partial_R\widetilde{S}_\Gamma$ and every combinatorial geodesic ray $r$ representing $\xi$, the subset $\Phi(\xi)$ is the smallest standard subcomplex of $\widetilde{S}_\Gamma$ that virtually contains $r$.  
\end{theo}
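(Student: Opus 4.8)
The plan is to prove Theorem~\ref{theo:roller} by first establishing the Borel measurability of the regular locus, then constructing $\Phi$ on non-regular points via a canonical smallest standard subcomplex, and finally verifying the three required properties (well-definedness/independence of the representing ray, equivariance, and Borel measurability). The conceptual core is the following characterization: a point $\xi$ is non-regular precisely when any combinatorial geodesic ray $r$ representing it fails the strong-separation escape condition, and I expect this failure to be ``localized'' in the sense that $r$ must virtually remain inside a proper standard subcomplex whose type is a join. So the first thing I would do is make precise the relationship between strong separation of hyperplanes in $\widetilde{S}_\Gamma$ and the combinatorics of the defining graph $\Gamma$: two hyperplanes dual to standard edges of types $v_1, v_2$ are strongly separated only when there is no standard edge adjacent-or-crossing both, which in $\Gamma$-language is controlled by whether $v_1$ and $v_2$ lie in a common join factor (i.e.\ whether they, together with the intervening combinatorics, stay inside $\mathbb{S}_J$).

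\textbf{Construction of the smallest subcomplex.} First I would fix a combinatorial geodesic ray $r$ representing $\xi$ and consider the collection of \emph{all} standard subcomplexes of $\widetilde{S}_\Gamma$ that virtually contain $r$ (i.e.\ contain a subray of $r$). This collection is nonempty since $\widetilde{S}_\Gamma$ itself qualifies. Using Lemma~\ref{lemma:parabolics}(1) --- stability of parabolic subgroups (hence of standard subcomplexes, via the stabilizer correspondence) under intersection, together with the fact that the intersection of standard subcomplexes of types $\Lambda_1, \Lambda_2$ has type $\Lambda_1\cap\Lambda_2$ --- I would argue that this collection is closed under (finite) intersection, and by Lemma~\ref{lemma:parabolics}(2) (the chain condition on parabolics, giving a descending chain condition on types) it admits a unique minimal element $Y$. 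This $Y$ is the candidate value $\Phi(\xi)$. The key geometric input, which I would extract from the definition of regularity, is that when $\xi$ is non-regular the minimal $Y$ is \emph{proper} and its type splits as a nontrivial join, so that $Y\in\mathbb{S}_J$: concretely, non-regularity produces two halfspaces $h_1,h_2\in U_\xi$ for which every ``escaping'' halfspace $k\subseteq h_1\cap h_2$ has $\partial k$ failing to be strongly separated from some $\partial h_i$, and translating this into $\Gamma$ shows that the supporting types of the hyperplanes crossed by the tail of $r$ all lie within a proper join-type subgraph --- forcing $Y$ to be a proper join standard subcomplex.

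\textbf{Independence of the representing ray, equivariance, measurability.} For well-definedness I would use the fact recalled in the background section that any two combinatorial rays $r, r'$ representing the same $\xi$ have finite symmetric difference $\calh(r)\,\Delta\,\calh(r')$, so they virtually contain the same standard subcomplexes; hence the minimal $Y$ is independent of the choice of $r$, and $\Phi$ is well-defined. Equivariance is immediate from the construction: $G_\Gamma$ acts by combinatorial isometries permuting standard subcomplexes and carrying rays to rays, so $g\cdot Y$ is the minimal subcomplex virtually containing $g\cdot r$, giving $\Phi(g\xi)=g\Phi(\xi)$. For Borel measurability of both the regular locus and of $\Phi$, I would phrase the defining conditions in terms of the coordinate functions $\xi\mapsto U_\xi\subseteq\mathfrak{H}$ on the compact metrizable space $\partial_R\widetilde{S}_\Gamma$: since $\widetilde{S}_\Gamma$ has countably many hyperplanes, each condition ``$h\in U_\xi$'' is clopen, regularity is a countable combination ($\forall h_1,h_2\,\exists k$) of such conditions --- hence Borel, showing $\partial_\reg\widetilde{S}_\Gamma$ is Borel --- and the value $\Phi(\xi)$ can be read off as the type and location determined by which hyperplanes are virtually crossed, again a Borel-measurable assignment into the countable set $\mathbb{S}_J$.

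\textbf{Main obstacle.} The step I expect to be hardest is proving that the minimal standard subcomplex $Y$ is \emph{proper with join type} exactly in the non-regular case, i.e.\ the precise dictionary between Fernós's strong-separation condition and the join/irreducibility structure of $\Gamma$. One direction (regular $\Rightarrow$ the ray escapes every proper standard subcomplex, so no proper $Y$ exists) should follow from building strongly separated escaping halfspaces out of generators in an irreducible factor, much as in Proposition~\ref{prop:kim-koberda}; the converse (non-regular $\Rightarrow$ $Y$ is a proper join) requires showing that the \emph{obstruction} to strong separation forces the relevant hyperplanes to share a common crossing hyperplane, which cubically encodes a commutation, hence confines the tail of $r$ to a join subcomplex. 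Getting this localization argument airtight --- in particular ruling out that the obstruction disperses across infinitely many incompatible types --- is where the real work lies, and I would lean on the finiteness of $\Gamma$ together with the chain condition of Lemma~\ref{lemma:parabolics}(2) to control it.
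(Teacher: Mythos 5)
Your overall architecture coincides with the paper's: a unique minimal standard subcomplex virtually containing any representing ray (their Lemma~\ref{lemma:confined}), Borel measurability read off from the closed sets $\partial_R Y$ (their Lemma~\ref{lem:borel}), equivariance by naturality, and the characterization ``$\xi$ non-regular $\iff$ the minimal subcomplex lies in $\mathbb{S}_J$'' (their Lemma~\ref{lem:join subgroup}). However, the step you yourself flag as the main obstacle is precisely the one you do not supply, and it is the mathematical heart of the theorem. Starting from the raw negation of regularity --- two halfspaces $h_1,h_2\in U_\xi$ admitting no strongly separated escaping halfspace $k$ --- does not by itself localize anything: the failure could a priori be witnessed by different obstructing hyperplanes for different choices of $k$, dispersing along the ray. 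The paper's mechanism is essential and absent from your sketch: it first extracts from $\calh(r)$ an infinite chain of hyperplanes $\{\mathfrak h_i\}$, each separating its predecessor from its successor, and invokes Fern\'os's Proposition~7.4 (a Ramsey-type extraction) to pass to an infinite subchain in which \emph{every pair} fails to be strongly separated. Only then does the combinatorics of $\Gamma$ enter: the ``bridges'' realizing $\calh(\mathfrak h_1)\cap\calh(\mathfrak h_i)$ are standard subcomplexes of $\mathfrak h_1$ whose types form a descending chain in the finite graph $\Gamma$ and hence stabilize to a nonempty $\Lambda$, producing the join standard subcomplex $Y$ of type $\Lambda\circ\Lambda^\perp$; a final convexity argument (a two-case analysis on whether a putative hyperplane $\mathfrak h'\in\calh(r')\setminus\calh(Y)$ meets $\mathfrak h_1$) shows that a tail of $r$ crosses only hyperplanes of $\calh(Y)$. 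Without the extraction step and the bridge construction, your sentence that the obstruction ``confines the tail of $r$ to a join subcomplex'' is an assertion, not a proof.

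A smaller issue: in the well-definedness step you claim that finiteness of $\calh(r)\,\Delta\,\calh(r')$ implies $r$ and $r'$ are virtually contained in the same standard subcomplexes. As stated this is close to circular, and it is not immediate: from the symmetric-difference condition one only gets that the minimal subcomplexes $Y_r$ and $Y_{r'}$ have the same \emph{type} (via the stabilization of edge labels along subrays); since distinct standard subcomplexes of the same type are disjoint, you must still rule out $Y_r\cap Y_{r'}=\emptyset$. The paper does this by noting that disjoint convex subcomplexes are separated by a hyperplane, contradicting the fact that a halfspace virtually contains $r$ iff it virtually contains $r'$. This is a one-line fix, but it is a genuine step rather than a tautology. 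Your direct Borel argument for $\partial_\reg\widetilde{S}_\Gamma$ via countable Boolean combinations of the clopen conditions $h\in U_\xi$ is fine and arguably more self-contained than the paper's route through $\Phi$.
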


Theorem~\ref{theo:roller} is a consequence of Lemmas~\ref{lemma:confined},~\ref{lem:borel} and~\ref{lem:join subgroup} below.

\begin{lemma}\label{lemma:confined}
Let $\Gamma$ be a finite simple graph. For every $\xi\in\partial_R\widetilde{S}_\Gamma$, there exists a unique standard subcomplex $Y\subseteq \widetilde{S}_\Gamma$ such that for every combinatorial geodesic ray $r$ representing $\xi$, the subcomplex $Y$ is the smallest standard subcomplex that virtually contains $r$.  
\end{lemma}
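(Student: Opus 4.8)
The plan is to fix $\xi$ and to characterize, for an arbitrary standard subcomplex $Y\subseteq\widetilde S_\Gamma$ (recall that standard subcomplexes are convex), the condition ``$Y$ virtually contains $r$'' purely in terms of $\xi$ and $Y$, thereby making it manifestly independent of the chosen ray $r$ representing $\xi$. Concretely, I claim that for every combinatorial geodesic ray $r$ representing $\xi$ and every standard subcomplex $Y$,
$$Y\text{ virtually contains }r\quad\Longleftrightarrow\quad\xi\in\partial_R Y.$$
Recall from the background that $\xi\in\partial_R Y$ means precisely that $U_\xi$ contains every halfspace containing $Y$. Granting this equivalence, the collection
$$\calc(\xi)=\{Y\subseteq\widetilde S_\Gamma\text{ standard}:\xi\in\partial_R Y\}$$
of standard subcomplexes virtually containing $r$ does not depend on $r$, and it then only remains to produce a unique minimal element.

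For the equivalence, the forward direction is immediate: if a subray $r|_{[t,\infty)}$ lies in $Y$, then every halfspace $h$ containing $Y$ virtually contains $r$, hence $h\in U_\xi$, so $\xi\in\partial_R Y$. The reverse direction is the heart of the argument, and I expect it to be the main obstacle, as it is where the convexity of $Y$ and the defining property of $U_\xi$ must be combined through a hyperplane count. Assume $\xi\in\partial_R Y$ and let $x$ be the origin of $r$. Writing $d$ for the combinatorial distance, I count the hyperplanes separating a vertex $r(n)$ from $Y$: such a hyperplane $\mathfrak h$ has $Y$ inside one of its halfspaces $h$ and $r(n)$ in the opposite halfspace $h^\ast$. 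Since $Y\subseteq h$ forces $h\in U_\xi$, the ray $r$ is eventually inside $h$, so it must cross $\mathfrak h$ at a time $>n$; in particular the whole initial segment of $r$, including $x$, lies in $h^\ast$, so $\mathfrak h$ also separates $x$ from $Y$. Hence the set of hyperplanes separating $r(n)$ from $Y$ is contained in the finite set of $d(x,Y)$ hyperplanes separating $x$ from $Y$, and each of its members is crossed by $r$ after time $n$. Taking $n$ larger than the finitely many times at which $r$ crosses a hyperplane separating $x$ from $Y$ forces this set to be empty, i.e.\ $d(r(n),Y)=0$ and $r(n)\in Y$. Thus a subray of $r$ lies in $Y$, proving the equivalence.

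It then remains to extract the minimal element of $\calc(\xi)$. This collection is nonempty, since $\widetilde S_\Gamma$ itself is a standard subcomplex containing $r$. It is closed under intersection: if $Y_1,Y_2\in\calc(\xi)$, then each contains a subray of $r$, hence $Y_1\cap Y_2$ contains a common subray, so it is nonempty and, by the structure of standard subcomplexes, is itself a standard subcomplex (of type the intersection of the two types) virtually containing $r$. Finally, a strictly decreasing chain of standard subcomplexes has strictly decreasing types: if $Y'\subsetneq Y$ are standard of equal type, they would be disjoint or equal, forcing $Y'=Y$. Since types are full subgraphs of the finite graph $\Gamma$, every descending chain in $\calc(\xi)$ stabilizes. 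Combining this descending chain condition with closure under intersection yields a least element: a minimal $Y\in\calc(\xi)$ exists, and for any $Y'\in\calc(\xi)$ one has $Y\cap Y'\in\calc(\xi)$ with $Y\cap Y'\subseteq Y$, so minimality gives $Y\subseteq Y'$. This $Y$ is the required smallest standard subcomplex virtually containing $r$; since $\calc(\xi)$ is independent of the choice of $r$ representing $\xi$, the same $Y$ works simultaneously for all such $r$, and uniqueness is clear from its characterization as the least element of $\calc(\xi)$.
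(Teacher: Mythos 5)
Your proof is correct, and it takes a genuinely different (and arguably cleaner) route than the paper's for the key well-definedness step. Both arguments produce the minimum of the family of standard subcomplexes virtually containing $r$ in the same way, via closure under intersection together with the descending chain condition on types; the difference lies in how independence of the representing ray is established. You prove the equivalence ``$Y$ virtually contains $r$ $\Leftrightarrow$ $\xi\in\partial_R Y$'' directly, the essential point being that a hyperplane separating $r(n)$ from the convex subcomplex $Y$ must also separate the origin $x$ from $Y$ and be crossed by $r$ after time $n$, so that only the finitely many ($d(x,Y)$) hyperplanes separating $x$ from $Y$ can occur and taking $n$ large eliminates them all; this makes the family $\mathcal{C}(\xi)$ manifestly a function of $\xi$ alone. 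The paper instead fixes two rays $r,r'$ representing $\xi$ and compares their minimal standard subcomplexes $Y_r,Y_{r'}$ head-on: it uses the edge-labelling of $\widetilde{S}_\Gamma$ by $V\Gamma$ together with the finiteness of $\mathcal{H}(r)\,\Delta\,\mathcal{H}(r')$ to show that the two types coincide, and then a separating-hyperplane argument to show $Y_r\cap Y_{r'}\neq\emptyset$, whence $Y_r=Y_{r'}$. Your equivalence is precisely the content of the remark the authors record after the lemma ($\Phi(\xi)$ is the smallest standard subcomplex $Y$ with $\xi\in\partial_R Y$, and $\partial_R Y_1\cap\partial_R Y_2=\partial_R(Y_1\cap Y_2)$), and it is also what is implicitly invoked in the proof of Lemma~\ref{lem:borel}; so your route yields slightly more in one stroke, at the cost of relying on the standard facts that standard subcomplexes are convex and that a vertex outside a convex subcomplex is separated from it by one of finitely many hyperplanes. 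The paper's labelling argument has the side benefit of identifying the type of $Y_r$ as the set of labels occurring infinitely often along $r$, which your argument does not directly provide.
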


\begin{proof}
Recall that the intersection of two standard subcomplexes is again a standard subcomplex and that there is a uniform bound on the length of a strictly descending chain of standard subcomplexes. Therefore, for every combinatorial geodesic ray $r$, there is a unique minimal standard subcomplex $Y_r\subseteq\widetilde{S}_\Gamma$ which virtually contains $r$. 

We are thus left with showing that if $r$ and $r'$ are combinatorial geodesic rays representing the same point of $\partial_R \widetilde{S}_\Gamma$ (possibly with different base points), then $Y_r=Y_{r'}$. In this case, as observed in Section~\ref{sec:background-roller},
\begin{enumerate}
	\item[(1)] a halfspace of $\widetilde{S}_\Gamma$ virtually contains $r$ if and only if it virtually contains $r'$;
	\item[(2)] the symmetric difference of $\mathcal{H}(r)$ and $\mathcal{H}(r')$ is finite.
\end{enumerate}
Let $\Gamma_r$ be the type of $Y_r$. As the 1-skeleton of $\widetilde{S}_\Gamma$ is the Cayley graph of $G_\Gamma$, we label edges of $\widetilde{S}_\Gamma$ by vertices of $\Gamma$. Then there exists a subray $r_1$ of $r$ such that the collection $L(r_2)$ of labels of edges of any further subray $r_2$ of $r_1$ satisfies $L(r_2)=V\Gamma_r$. We define $r'_1$ and $\Gamma_{r'}$ similarly. By (2), for any subray $r'_3$ of $r'_1$, there is a subray $r_3$ of $r_1$ with $L(r_3)\subseteq L(r'_3)$. Thus $\Gamma_r\subseteq \Gamma_{r'}$. Similarly $\Gamma_{r'}\subseteq \Gamma_r$. Therefore $\Gamma_r=\Gamma_{r'}$, and in fact $Y_r=Y_{r'}$: indeed (1) implies that $Y_r\cap Y_{r'}\neq\emptyset$, as otherwise $Y_r$ and $Y_{r'}$ would be separated by a hyperplane (see \cite[Corollary 13.10]{haglund2008special}). This concludes our proof.
\end{proof}

Lemma~\ref{lemma:confined} yields a well-defined map $\Phi:\partial_R \widetilde{S}_\Gamma\to \mathbb{S}$, where $\mathbb{S}$ is the set of all standard subcomplexes of $\widetilde{S}_\Gamma$ (including $\widetilde{S}_\Gamma$ itself). Moreover, the map $\Phi$ is equivariant with respect to the natural actions of $G_\Gamma$. 

\begin{rk}
The map $\Phi$ can be reinterpreted as follows: Lemma~\ref{lemma:confined} shows that given any two standard subcomplexes $Y_1,Y_2\subseteq\widetilde{S}_\Gamma$, one has $\partial_R Y_1\cap\partial_R Y_2=\partial_R(Y_1\cap Y_2)$ (viewed as subsets of $\partial_R\widetilde{S}_\Gamma$), and the map $\Phi$ then sends a point $\xi\in\partial_R\widetilde{S}_\Gamma$ to the smallest standard subcomplex $Y$ such that $\xi\in\partial_RY$.
\end{rk}

We equip the countable set $\mathbb{S}$ with the discrete topology.

\begin{lemma}
	\label{lem:borel}
The map $\Phi$ is Borel.
\end{lemma}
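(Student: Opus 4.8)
The plan is to exploit the fact that $\mathbb{S}$ is a countable set equipped with the discrete topology: a map into such a space is Borel precisely when the preimage of every singleton is Borel. So it suffices to show that for each standard subcomplex $Y\in\mathbb{S}$, the fiber $\Phi^{-1}(Y)=\{\xi\in\partial_R\widetilde{S}_\Gamma:\Phi(\xi)=Y\}$ is a Borel subset of $\partial_R\widetilde{S}_\Gamma$.

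The key is the reinterpretation of $\Phi$ recorded in the Remark following Lemma~\ref{lemma:confined}: $\Phi(\xi)$ is the smallest standard subcomplex $Y$ with $\xi\in\partial_R Y$, and one has $\partial_R Y_1\cap\partial_R Y_2=\partial_R(Y_1\cap Y_2)$. From this I would first establish the identity $\{\xi:\Phi(\xi)\subseteq Y\}=\partial_R Y$. Indeed, $\xi\in\partial_R Y$ forces $\Phi(\xi)\subseteq Y$ by minimality (intersect $Y$ with $\Phi(\xi)$ and apply the displayed intersection formula), while conversely $\Phi(\xi)\subseteq Y$ gives $\xi\in\partial_R\Phi(\xi)\subseteq\partial_R Y$. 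Now recall from Section~\ref{sec:background-roller} that for any convex subcomplex, and in particular any standard subcomplex $Y$, the set $\partial_R Y$ is a \emph{closed} subset of $\partial_R\widetilde{S}_\Gamma$; hence each of these inclusion-fibers is closed.

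With these two facts in hand the computation of the fiber is immediate: $\Phi(\xi)=Y$ holds exactly when $\xi\in\partial_R Y$ and there is no standard subcomplex $Y'\subsetneq Y$ with $\xi\in\partial_R Y'$ (otherwise $\Phi(\xi)\subseteq Y'\subsetneq Y$, contradicting $\Phi(\xi)=Y$). Thus
$$\Phi^{-1}(Y)=\partial_R Y\setminus\bigcup_{Y'\subsetneq Y}\partial_R Y',$$
where $Y'$ ranges over the standard subcomplexes properly contained in $Y$. The right-hand side is a closed set minus a countable union of closed sets, hence Borel, which completes the argument.

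I do not expect a genuine obstacle here, as the content is essentially bookkeeping. The two points requiring care are the closedness of $\partial_R Y$ for every standard subcomplex $Y$ (guaranteed because standard subcomplexes are convex, so the background embedding $\partial_R Y\hookrightarrow\partial_R\widetilde{S}_\Gamma$ has closed image) and the countability of $\mathbb{S}$ (which holds since $\widetilde{S}_\Gamma$ has countably many cells, so the union defining the complement is genuinely countable, hence an $F_\sigma$ and a fortiori Borel).
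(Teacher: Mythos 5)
Your proof is correct and follows essentially the same route as the paper: both reduce to showing each fiber is Borel, establish the identity $\Phi^{-1}(Y)=\partial_R Y\setminus\bigcup_{Z\subsetneq Y}\partial_R Z$, and conclude from the closedness of the sets $\partial_R Z$ together with the countability of $\mathbb{S}$. The only cosmetic difference is that you derive the fiber formula from the Remark's reinterpretation of $\Phi$ rather than directly from the geodesic-ray description, which changes nothing of substance.
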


\begin{proof}
Let $Y\in \mathbb {S}$ be a standard subcomplex, and let $x\in \widetilde{S}_\Gamma$. Then a point $\xi\in\partial_R\widetilde{S}_\Gamma$ belongs to $\Phi^{-1}(Y)$ if and only if it is represented by a combinatorial geodesic ray based at $x$ which is virtually contained in $Y$, but not virtually contained in any proper standard subcomplex of $Y$. Thus $\Phi^{-1}(Y)=\partial_R Y\setminus(\cup_{Z\in \mathbb {S}, Z\subsetneq Y}\partial_R Z)$. The lemma follows as the subspaces $\partial_R Z$ and $\partial_R Y$ are closed subsets of $\partial_R \widetilde{S}_\Gamma$.
\end{proof}

Now we recall some standard facts to prepare for the next lemma. As $\widetilde{S}^{(1)}_\Gamma$ is the Cayley graph of $G_\Gamma$ with respect to its standard generating set, we label each edge of $\widetilde{S}_\Gamma$ by a vertex of $\Gamma$. Recall that an edge is \emph{dual} to a hyperplane if this hyperplane intersects the edge in its midpoint. Note that if two edges are dual to the same hyperplane, then they have the same label. Hence each hyperplane has a well-defined label. Note that two hyperplanes having the same label are either equal or disjoint. Let $h$ be a hyperplane in $\widetilde{S}_\Gamma$. Then the smallest subcomplex of $\widetilde{S}_\Gamma$ containing $h$ splits as a product $h\times [0,1]$. Note that both $h\times\{0\}$ and $h\times\{1\}$ are standard subcomplexes of type $\lk(v)$, where $v\in \Gamma$ is the label of $h$. Thus it is natural to consider \emph{standard subcomplexes} of $h$ and their types, which correspond to standard subcomplexes of $h\times\{0\}$ (or $h\times\{1\}$).

Let $X_1$ and $X_2$ be standard subcomplexes of $\widetilde{S}_\Gamma$. It is a standard fact that one can find a standard subcomplex $B\subseteq X_1$ such that $\calh(B)=\calh(X_1)\cap \calh(X_2)$. Actually, we can take $B$ to be the subset of $X_1$ made of all points whose distance to $X_2$ is equal to $d(X_1,X_2)$. Then $B$ is a standard subcomplex of $X_1$ (see \cite[Lemma~3.1]{Hua}) and $\calh(B)=\calh(X_1)\cap \calh(X_2)$ (see e.g. \cite[Lemma~2.14]{Hua3}). By the discussion in the previous paragraph, the fact still holds if we require $X_1$ and $X_2$ to be hyperplanes.

\begin{lemma}\label{lem:join subgroup}
Let $\Gamma$ be a finite simple graph. Let $r$ be a combinatorial geodesic ray in $\widetilde{S}_\Gamma$. Then $r$ represents a point in $\partial_\reg\widetilde{S}_\Gamma$ if and only if $r$ is not virtually contained in a join standard subcomplex of $\widetilde{S}_\Gamma$.
\end{lemma}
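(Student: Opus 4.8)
The plan is to reformulate everything in terms of the minimal standard subcomplex. By Lemma~\ref{lemma:confined}, $r$ determines a unique smallest standard subcomplex $Y_r=\Phi(\xi)$ that virtually contains it; write $\Gamma_r$ for its type. Since $Y_r$ is contained in every standard subcomplex virtually containing $r$, the ray $r$ is virtually contained in a join standard subcomplex if and only if $Y_r\in\mathbb{S}_J$. A short graph-theoretic computation, using the nesting properties of standard subcomplexes recorded in Section~\ref{sec:review}, then shows that $Y_r\in\mathbb{S}_J$ if and only if either $\Gamma_r$ admits a nontrivial join decomposition or $\Gamma_r^\perp\neq\emptyset$: in the first case $Y_r$ is itself a join subcomplex, and in the second $Y_r$ lies in a standard subcomplex of type $\Gamma_r\circ\Gamma_r^\perp$. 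Thus it suffices to prove that $\xi$ is regular if and only if $\Gamma_r$ is irreducible and $\Gamma_r^\perp=\emptyset$.

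For the direction where $r$ is virtually contained in a join subcomplex, I would argue directly that $\xi$ is non-regular. Write $Y_r\subseteq Y=Y_1\times Y_2$ with both factors nontrivial, so that every hyperplane of $Y$ is of $Y_1$-type or $Y_2$-type, and any two hyperplanes of different types cross. After possibly swapping the factors, choose $h_1\in U_\xi$ whose boundary $\partial h_1$ is a $Y_1$-type hyperplane crossed by $r$, and take $h_2=h_1$ (only the failure of strong separation from $\partial h_1$ will be needed, since regularity demands strong separation from \emph{both}). Given any $k\in U_\xi$ with $k\subseteq h_1\cap h_2$, one first checks that $\partial k$ must cross $Y$: otherwise $Y$ lies entirely on the side of $\partial k$ containing the tail of $r$, forcing $Y\subseteq k\subseteq h_1$, which is impossible since $\partial h_1$ crosses $Y$. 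Hence $\partial k$ is of $Y_1$- or $Y_2$-type; in the first case any $Y_2$-type hyperplane (one exists since $Y_2$ is nontrivial) crosses both $\partial k$ and $\partial h_1$, and in the second case $\partial k$ crosses $\partial h_1$ itself. In either case $\partial k$ is not strongly separated from $\partial h_1$, so $\xi$ is non-regular.

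The harder direction is to show that if $\Gamma_r$ is irreducible and $\Gamma_r^\perp=\emptyset$, then $\xi$ is regular, and the strategy is to verify the defining condition by producing, cofinally along $r$, hyperplanes strongly separated from prescribed earlier ones. I would first reduce the $U_\xi$-formulation of regularity to a statement about hyperplanes actually crossed by $r$: given $h_1,h_2\in U_\xi$, sufficiently deep crossed halfspaces are contained in $h_1\cap h_2$, so it is enough to find a hyperplane crossed far out along $r$ that is strongly separated from two fixed hyperplanes crossed by $r$. The two hypotheses feed in as follows. The condition $\Gamma_r^\perp=\emptyset$ means no vertex of $\Gamma$ is adjacent to all of $\Gamma_r$, so no single hyperplane of $\widetilde{S}_\Gamma$ can cross all hyperplanes of $Y_r$; and irreducibility of $\Gamma_r$ (equivalently, connectedness of its complement graph), together with the minimality of $Y_r$ -- which by the argument of Lemma~\ref{lemma:confined} forces every subray of $r$ to cross hyperplanes carrying all labels of $\Gamma_r$ -- is what lets one locate a pair of hyperplanes crossed by $r$ admitting no common crosser.

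I expect the cofinal production of such strongly separated pairs inside the irreducible minimal subcomplex, and its matching with the $U_\xi$-definition of regularity, to be the technical heart of the proof. Making the location precise requires the combinatorial strong-separation criterion for right-angled Artin groups (in the spirit of Behrstock--Charney and of \cite{genevois2020contracting}), namely a description of when two disjoint hyperplanes of $\widetilde{S}_\Gamma$ admit no hyperplane crossing both in terms of the labels occurring in the bridge between them, combined with a careful cofinality argument guaranteeing such a configuration beyond any given point along $r$. The combination of this lemma with Lemmas~\ref{lemma:confined} and~\ref{lem:borel} then yields Theorem~\ref{theo:roller}, since $\Phi(\xi)=Y_r\in\mathbb{S}_J$ for every non-regular $\xi$, and membership in $\mathbb{S}_J$ is exactly the asserted containment in a standard subcomplex that splits as a product.
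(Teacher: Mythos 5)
Your proposal establishes the easy implication and a correct reformulation (via Lemma~\ref{lemma:confined}, the ray is virtually contained in a join standard subcomplex iff its minimal standard subcomplex $Y_r$ lies in $\mathbb{S}_J$, iff $\Gamma_r$ is a nontrivial join or $\Gamma_r^\perp\neq\emptyset$), but the converse implication --- that irreducibility of $\Gamma_r$ together with $\Gamma_r^\perp=\emptyset$ forces regularity --- is only announced, not proved. You yourself flag the ``cofinal production of strongly separated pairs'' and the ``combinatorial strong-separation criterion \ldots{} combined with a careful cofinality argument'' as the technical heart and leave both unexecuted. This is not a small omission: it is the entire content of the lemma in the nontrivial direction, and neither the criterion (when do two disjoint hyperplanes of $\widetilde{S}_\Gamma$ admit no common crosser, in terms of the type of the gate/bridge between them) nor the argument that such configurations occur cofinally along $r$ is supplied. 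There is also a gap in your reduction of the $U_\xi$-formulation: the claim that ``sufficiently deep crossed halfspaces are contained in $h_1\cap h_2$'' is false as stated (a deep halfspace whose wall crosses $\partial h_1$ is never contained in $h_1$; think of a diagonal ray in $\mathbb{Z}^2$ against a coordinate hyperplane), and halfspaces $h_1\in U_\xi$ whose wall is \emph{not} crossed by $r$ --- possibly crossing infinitely many walls that $r$ does cross --- need a separate argument before you may replace them by walls crossed by $r$.

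For comparison, the paper avoids all of this by proving the contrapositive: assuming $\xi$ is non-regular, it invokes Fern\'os's \cite[Proposition~7.4]{Fer} to extract an infinite chain of hyperplanes $\{\mathfrak h_i\}$ crossed by $r$, each separating its neighbours, that are pairwise \emph{not} strongly separated; it then stabilizes the gate subcomplexes $A_i\subseteq\mathfrak h_i$ with $\calh(A_i)=\bigcap_{k\ge i}\calh(\mathfrak h_k)$ to produce a nonempty type $\Lambda$ with $\Lambda^\perp\neq\emptyset$ and shows a subray of $r$ lies in the standard subcomplex of type $\Lambda\circ\Lambda^\perp$. That route needs no explicit strong-separation criterion and no cofinality construction, only the descending-chain condition on standard subcomplexes. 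If you wish to keep your direct approach, you must actually prove a Behrstock--Charney-type separation criterion and carry out the cofinality argument; as written, the proof is incomplete.
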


\begin{proof}
First we show that if $r$ is virtually contained in a join standard subcomplex $X$ representing $\xi\in \partial_R \widetilde{S}_\Gamma$, then $\xi$ is not regular. To see this,  let $X_1$ and $X_2$ be two unbounded standard subcomplexes of $\widetilde{S}_\Gamma$ such that $X$ splits as $X=X_1\times X_2$. We assume without loss of generality that $r\subset X$, and the image of $r$ under the natural projection $X\to X_1$ is unbounded. Take halfspaces $h_1,h_2\in U_\xi$ such that $\partial h_i\in \mathcal H(X_1)\cap\mathcal H(r)$. By definition of  a regular point, it suffices to show that if $k$ is a halfspace with $k\in U_\xi$ and $k\subset h_1\cap h_2$, then $\partial k$ and $\partial h_i$ are not strongly separated for any $i\in\{1,2\}$. Note that $\partial k\cap X\neq \emptyset$, as otherwise we will have $r\subset k$, and therefore $r\subset h_1$, contradicting that $\partial h_1\in \mathcal H(r)$. As $\mathcal H(X)=\mathcal H(X_1)\sqcup\mathcal H(X_2)$, we have $\partial k\in \mathcal H(X_1)$ or $\partial k\in \mathcal H(X_2)$. The latter is not possible as it will imply $\partial k\cap\partial h_1\neq\emptyset$, contradicting $k\subset h_1$. Now $\partial k\in \mathcal H(X_1)$ implies that $\partial k$ and $\partial h_1$ cannot be strongly separated, because any hyperplane in $\mathcal H(X_2)$ will intersect both $\partial k$ and $\partial h_1$.

Now we suppose that $r$ does not represent a regular point. Let $\{\mathfrak h_i\}_{i\in\mathbb{N}}$ be an infinite collection of hyperplanes crossed by $r$ such that for every $i\in\mathbb{N}$, the hyperplane $\mathfrak h_{i}$ separates $\mathfrak h_{i-1}$ from $\mathfrak h_{i+1}$ (such a collection can be found by looking at hyperplanes with the same label, though this is a general fact for geodesic rays in finite dimensional CAT(0) cube complexes). By  \cite[Proposition~7.5]{Fer}, up to passing to a subcollection, we can assume that for all $i,j\in\mathbb{N}$, the hyperplanes $\mathfrak h_i$ and $\mathfrak h_j$ are not strongly separated.

For every $i\in\mathbb{N}$, let $B_i\subseteq\mathfrak h_1$ be a standard subcomplex of $\mathfrak h_1$ with $\calh(B_i)=\calh(\mathfrak h_1)\cap \calh(\mathfrak h_i)$. For every $i_1<i_2<i_3$, every hyperplane that intersects both $\mathfrak h_{i_1}$ and $\mathfrak h_{i_3}$ also intersects $\mathfrak h_{i_2}$. This implies that for every $j\le i$, one has $\calh(B_i)\subseteq \calh(B_j)$. Thus the type $\Gamma_i$ of $B_i$ is contained in $\Gamma_j$, with $\Gamma_i=\Gamma_j$ if and only if $\calh(B_i)=\calh(B_j)$. Thus there exists $i_0\in\mathbb{N}$ such that for every $i\ge i_0$, one has $\Gamma_i=\Gamma_{i_0}$. We define $A_1=B_{i_0}$. Then $\calh(A_1)=\cap_{j=1}^{\infty}\calh(\mathfrak h_j)$. Moreover, as $\mathfrak h_1$ and $\mathfrak h_{i_0}$ are not strongly separated, we have $\Gamma_{i_0}\neq\emptyset$. Similarly, for each $i\ge 1$ we define a standard subcomplex $A_i\subseteq \mathfrak h_i$ such that $\calh(A_i)=\cap_{k=i}^{\infty}\calh(\mathfrak h_k)$. Thus $\calh(A_j)\subseteq\calh(A_i)$ whenever $j\le i$. By a similar argument as before (the type of every $A_i$ is a subgraph of $\Gamma$), up to passing to an infinite subset of $\{\mathfrak h_i\}_{i\ge 1}$, we can (and will) assume that $\calh(A_i)=\calh(A_j)$ for every $i\neq j$. Let $\Lambda$ be the type of $A_1$, and let $Y$ be the standard subcomplex of $\widetilde{S}_\Gamma$ with type $\Lambda\circ\Lambda^\perp$ that contains $A_1$. Then for every $i\ge 1$, we have $A_i\subseteq Y$. We already know $\Lambda\neq\emptyset$ from the previous discussion, and $\Lambda^\perp\neq\emptyset$ as the label of any edge dual to the hyperplane containing $A_1$ belongs to $\Lambda^\perp$. We pass to a further infinite subset of $\{\mathfrak h_i\}_{i=1}^\infty$ so that $\calh(A_1)=\calh(\mathfrak h_1)\cap \calh(\mathfrak h_2)$.

Let $r'$ be a combinatorial subray of $r$ with $r'\cap \mathfrak h_2=\emptyset$. We claim that $\calh(r')\subseteq\calh(Y)$. This claim will imply that $r'$ is contained in a standard subcomplex of the same type as $Y$, thus concluding the proof of the lemma.

We are thus left proving the above claim. Suppose towards a contradiction that there exists $\mathfrak h'\in \calh(r')\setminus\calh(Y)$. If $\mathfrak h'$ intersects infinitely many members of $\{\mathfrak h_i\}_{i=1}^\infty$, then there exists $j_0\in\mathbb{N}$ such that $\mathfrak h'\in \cap_{j=j_0}^\infty \calh(\mathfrak h_j)= \calh(A_{j_0})=\calh(A_1)\subseteq\calh(Y)$, a contradiction. Thus $\mathfrak h'$ intersects at most finitely many members from $\{\mathfrak h_i\}_{i=1}^\infty$. Let $e'$ (resp.\ $e_i$) be the edge of $r$ dual to $\mathfrak h'$ (resp.\ $\mathfrak h_i$). Then there exists $\ell_0\in\mathbb{N}$ such that $e_{\ell_0}$ occurs after $e'$ in $r'$ and $\mathfrak h_{\ell_0}\cap \mathfrak h'=\emptyset$. We will now argue separately depending on whether $\mathfrak{h}'$ intersects $\mathfrak{h}_1$ or not, and reach a contradiction in both cases.

We first assume that $\mathfrak h'\cap \mathfrak h_1\neq\emptyset$. As $r'$ intersects $\mathfrak h'$ but not $\mathfrak{h}_2$, it follows that $\mathfrak{h}'$ also intersects $\mathfrak{h}_2$. Therefore $\mathfrak{h}'$ has nontrivial intersection with both $\mathfrak h_1$ and $\mathfrak h_2$, which implies that $\mathfrak h'\in\calh(\mathfrak h_1)\cap \calh(\mathfrak h_2)=\calh(A_1)\subseteq\calh(Y)$. This is a contradiction. 

We now assume that  $\mathfrak h'\cap \mathfrak h_1=\emptyset$. As $e_1$ occurs before $e'$ in $r$, we know that $\mathfrak h'$ separates $\mathfrak h_1$ from $\mathfrak h_{\ell_0}$. As $\mathfrak h_1$ and $\mathfrak h_{\ell_0}$ belong to $\calh(Y)$, we have $\mathfrak h'\in \calh(Y)$ by convexity of $Y$, a contradiction. This concludes our proof.
\end{proof}

We denote by $\mathbb P_J$ the set of all parabolic subgroups of $G_\Gamma$ which are contained in a parabolic subgroup whose type admits a nontrivial join decomposition. Combining the map $\Phi$ given by Theorem~\ref{theo:roller} with the $G_\Gamma$-equivariant map $\mathbb{S}_J\to\mathbb{P}_J$ sending a standard subcomplex to its stabilizer, we reach the following corollary, which is the form in which Theorem~\ref{theo:roller} will be used in the sequel.

\begin{cor}\label{cor:roller}
Let $\Gamma$ be a finite simple graph. There exists a Borel $G_\Gamma$-equivariant map $\partial_R\widetilde{S}_\Gamma\setminus\partial_\reg\widetilde{S}_\Gamma\to\mathbb{P}_J$.
\qed
\end{cor}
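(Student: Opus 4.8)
The plan is to obtain the corollary by postcomposing the map $\Phi$ furnished by Theorem~\ref{theo:roller} with the natural stabilizer map on standard subcomplexes. First I would define $\Psi\colon\mathbb{S}_J\to\mathbb{P}_J$ by sending each standard subcomplex $Y$ to its stabilizer $\stab_{G_\Gamma}(Y)$. By fact~(2) recalled in Section~\ref{sec:review} on the Salvetti complex, the stabilizer of any standard subcomplex is a parabolic subgroup, whose type coincides with the type of the subcomplex, so $\Psi$ is at least a well-defined map into the set $\mathbb{P}$ of all parabolic subgroups.

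The one point that genuinely requires a check is that $\Psi$ takes values in $\mathbb{P}_J$ rather than merely in $\mathbb{P}$. Given $Y\in\mathbb{S}_J$ of type $\Lambda$, by definition of $\mathbb{S}_J$ the subcomplex $Y$ is contained in some standard subcomplex $J$ whose type $\Theta$ admits a nontrivial join decomposition. Since $Y\subseteq J$ and both $\Lambda$ and $\Theta$ are full subgraphs of $\Gamma$, the subgraph $\Lambda$ is a full subgraph of $\Theta$, whence $G_\Lambda\subseteq G_\Theta$. Writing the parabolic $\stab(Y)=g\,G_\Lambda\,g^{-1}$ for a suitable $g\in G_\Gamma$, I would then observe that $g\,G_\Theta\,g^{-1}$ is a parabolic subgroup of join type $\Theta$ containing $\stab(Y)$, so that $\stab(Y)\in\mathbb{P}_J$ as required. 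Equivariance is immediate from the identity $\stab(gY)=g\,\stab(Y)\,g^{-1}$.

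It remains to assemble these observations. Both $\mathbb{S}_J$ and $\mathbb{P}_J$ are countable sets equipped with the discrete topology, so every map out of $\mathbb{S}_J$ is automatically Borel; in particular $\Psi$ is Borel. Theorem~\ref{theo:roller} guarantees that $\Phi\colon\partial_R\widetilde{S}_\Gamma\setminus\partial_\reg\widetilde{S}_\Gamma\to\mathbb{S}_J$ is Borel and $G_\Gamma$-equivariant, so the composition $\Psi\circ\Phi$ is a Borel $G_\Gamma$-equivariant map into $\mathbb{P}_J$, which is exactly the asserted map. I do not expect any real obstacle here: the content of the statement lies entirely in Theorem~\ref{theo:roller}, and the only step needing a moment's care is the type computation showing that stabilizers of subcomplexes in $\mathbb{S}_J$ land in $\mathbb{P}_J$, which reduces to the elementary fact that containment of standard subcomplexes yields containment of their types and hence of the associated standard parabolics under a common conjugator.
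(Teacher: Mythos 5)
Your proposal is correct and follows exactly the paper's route: the authors obtain the corollary by postcomposing the map $\Phi$ from Theorem~\ref{theo:roller} with the $G_\Gamma$-equivariant map $\mathbb{S}_J\to\mathbb{P}_J$ sending a standard subcomplex to its stabilizer, which is precisely your $\Psi\circ\Phi$. The extra details you supply (that stabilizers of subcomplexes in $\mathbb{S}_J$ land in $\mathbb{P}_J$ via a common conjugator, and that Borelness is automatic on countable discrete sets) are correct and are left implicit in the paper.
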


\begin{rk}
It is natural to ask how much of the discussion in this section applies to more general $\mathrm{CAT}(0)$ cube complexes. As a starting point, we consider the following geometric question.
Let $X$ be a $\mathrm{CAT}(0)$ cube complex, possibly with a geometric group action. Is it possible to split $\partial_R X$ into a regular part $K_1$, corresponding to ``hyperbolic directions'' in $X$, and a non-regular part $K_2=\partial_{R} X\setminus K_1$, corresponding to ``product regions'' of $X$ in the sense that each point in $K_2$ is contained in the Roller boundary of a subcomplex which splits as a product of two unbounded cube complexes? 

Note that in general we cannot take $K_1$ to be $\partial_{\reg} X$. Indeed, consider $X=[0,1]\times \mathbb R$ with the usual cubical structure. Then every point of the Roller boundary is non-regular, but no subcomplex in $X$ splits as a product of two unbounded cube complexes. To remedy this, we define a point $\xi\in\partial_R X$ to be \emph{weakly regular} if there exist $N\ge 0$ and an infinite descending chain of halfspaces $\{h_i\}_{i=1}^\infty\subset U_\xi$ such that for any $i\neq j$, there are at most $N$ hyperplanes of $X$ intersecting both $\partial h_i$ and $\partial h_j$. Note that the definition of a regular point corresponds to the case where $N=0$ (see \cite[Proposition 7.5]{Fer}).
In the above example $X=[0,1]\times\mathbb{R}$, every point of the Roller boundary is now weakly regular. And we can take $K_1$ to be the collection of weakly regular points. 

A second issue is that a non weakly regular point $\xi$ might be contained in the Roller boundary of a cubical staircase inside $X$ (see e.g.\ \cite[Figure~1]{hagen2020hierarchical}), but if this
staircase is not contained in a quarter plane, then $\xi$ is not contained in the Roller boundary of a product subcomplex. Work of Hagen and Susse \cite{hagen2020hierarchical} gives conditions on when one can complete the staircases in $X$ to quarter planes, i.e. \cite[Theorem~A]{hagen2020hierarchical} implies the existence of a \emph{factor system} as defined in \cite[Section~1.2]{hagen2020hierarchical}, and factor systems lead to completions of staircases as discussed in \cite[Section~7]{hagen2020hierarchical}. In particular \cite[Theorem~A]{hagen2020hierarchical} applies to universal covers of Salvetti complexes and many other cube complexes.

Given the above connection, one can prove Lemma~\ref{lem:join subgroup} using results from \cite{hagen2020hierarchical}. However, we chose to write out a self-contained proof for the convenience of the reader, as in the setting of Lemma~\ref{lem:join subgroup} the proof is a short argument.
\end{rk}

\section{Main measure equivalence classification theorem}\label{sec:me}

In this section, we prove our main theorems concerning the measure equivalence classification of right-angled Artin groups with finite outer automorphism groups (Theorems~\ref{theointro:1} and~\ref{theointro:main}).

\subsection{Measured groupoids}

\paragraph*{General definitions.} General background about measured groupoids can be found in \cite[Section~2.1]{AD} or \cite{Kid-survey}, for instance. By definition, a Borel groupoid $\mathcal{G}$ is a standard Borel space which comes equipped with a base space $Y$ (which is also a standard Borel space, and every element $g\in\calg$ can be thought of as an arrow with a source $s(g)$ and a range $r(g)$ in $Y$, the maps $s$ and $r$ being Borel). A Borel groupoid $\mathcal{G}$ also comes equipped by definition with a composition law, an inverse map (all Borel) and a neutral element $e_y$ for every $y\in Y$, satisfying the axioms of groupoids (see e.g.\ \cite[Definition~2.10]{Kid-survey}). All Borel groupoids considered in the present paper are \emph{discrete}, i.e.\ there are at most countably many elements with a given source or range. 

As in \cite[Definition~2.13]{Kid-survey}, we define a \emph{measured groupoid} as a Borel groupoid $\mathcal{G}$ for which the base space $Y$ has a quasi-invariant $\sigma$-finite measure -- which will always be a probability measure in the present paper. There is a natural notion of measured subgroupoid of $\mathcal{G}$, as well as a notion of restriction $\mathcal{G}_{|U}$ of a measured groupoid $\mathcal{G}$ to a Borel subset $U$ of the base space $Y$, by only considering elements of $\calg$ whose source and range both belong to $U$. 

\paragraph*{Stably trivial groupoids, groupoids of infinite type.} A measured groupoid $\calg$ over a base space $Y$ is \emph{trivial} if $\calg=\{e_y|y\in Y\}$. It is \emph{stably trivial} if there exist a conull Borel subset $Y^*\subseteq Y$ and a partition $Y^*=\dunion_{i\in I} Y_i$ into at most countably many Borel subsets such that for every $i\in I$, the groupoid $\calg_{|Y_i}$ is trivial. It is \emph{of infinite type} if for every Borel subset $U\subseteq Y$ of positive measure, and a.e.\ $y\in U$, there are infinitely many elements of $\calg_{|U}$ with source $y$. Notice that any restriction of a subgroupoid of infinite type is again of infinite type.

\paragraph*{Cocycles.} Given a measured groupoid $\calg$ over a base space $Y$ and a countable group $G$, a \emph{strict cocycle} $\rho:\calg\to G$ is a Borel map such that for all $g_1,g_2\in\calg$ with $s(g_1)=r(g_2)$ (where $s$ and $r$ denote the source and range maps, respectively), one has $\rho(g_1g_2)=\rho(g_1)\rho(g_2)$. Here the word \emph{strict} refers to the fact that this relation is assumed to hold everywhere, not just on a conull Borel subset. The \emph{kernel} of a strict cocycle $\rho$ is the subgroupoid of $\calg$ made of all elements $g$ such that $\rho(g)=e$.

Let $\calg$ be a measured groupoid over a base space $Y$, and let $G$ be a countable group. We say that a strict cocycle $\calg\to G$ is \emph{action-type} if it has trivial kernel and for every infinite subgroup $H\subseteq G$, the subgroupoid $\rho^{-1}(H)$ is of infinite type. An important example, which motivates the terminology, is that when $\calg$ is the measured groupoid naturally associated to a measure-preserving action of a countable group $G$ on a finite measure space $Y$, the cocycle $\rho$ given by the action is action-type (see \cite[Proposition~2.26]{Kid-survey} for details, which relies on work of Adams \cite{Ada}).  We emphasize that it is crucial in this example that the $G$-action on $Y$ preserves a finite measure, see Remark~\ref{rk:action-type} below.

\paragraph*{Invariant maps and stabilizers.} When $G$ is acting on a standard Borel space $\Delta$ by Borel automorphisms, and $\rho:\calg\to G$ is a strict cocycle, we say that a Borel map $\phi:Y\to\Delta$ is \emph{$(\calg,\rho)$-invariant} if there exists a conull Borel subset $Y^*\subseteq Y$ such that for all $g\in\calg_{|Y^*}$, one has $\phi(r(g))=\rho(g)\phi(s(g))$.
Also, for every $\delta\in\Delta$, we say that $\delta$ is \emph{$(\calg,\rho)$-invariant} if the constant map with value $\delta$ is $(\calg,\rho)$-invariant. In other words, there exists a conull Borel subset $Y^*\subseteq Y$ such that $\rho(\calg_{|Y^*})\subseteq \Stab_G(\delta)$. The \emph{$(\calg,\rho)$-stabilizer} of $\delta$ is defined as the subgroupoid of $\calg$ made of all $g\in\calg$ such that $\rho(g)\in\Stab_G(\delta)$.

\paragraph*{Stable containment and stable equivalence.} Let $\calg$ be a measured groupoid over a base space $Y$, and let $\calh,\calh'$ be two measured subgroupoids of $\calg$. We say that $\calh$ is \emph{stably contained} in $\calh'$ if there exists a conull Borel subset $Y^*\subseteq Y$ and a partition  $Y^*=\dunion_{i\in I}Y_i$ into at most countably many Borel subsets, such that for every $i\in I$, one has $\calh_{|Y_i}\subseteq\calh'_{|Y_i}$. We say that $\calh$ and $\calh'$ are \emph{stably equivalent} if each of them is stably contained in the other -- in other words, there exists a conull Borel subset $Y^*\subseteq Y$ and a partition  $Y^*=\dunion_{i\in I}Y_i$ into at most countably many Borel subsets such that for every $i\in I$, one has $\calh_{|Y_i}=\calh'_{|Y_i}$. 

\paragraph*{Normalization.}

A notion of normality of a subgroupoid was proposed by Kida in \cite[Chapter~4, Section~6.1]{Kid-memoir}, building on work of Feldman, Sutherland and Zimmer \cite{FSZ}, defined as follows. Given a measured groupoid $\calg$ over a base space $Y$, a measured subgroupoid $\calh\subseteq\calg$, and a Borel subset $B\subseteq\calg$, one says that $\calh$ is \emph{$B$-invariant} if there exists a conull Borel subset $Y^*\subseteq Y$ such that for every $g_1,g_2\in B\cap\calg_{|Y^*}$ and every $h\in\calg_{|Y^*}$ satisfying $s(h)=s(g_1)$ and $r(h)=s(g_2)$, one has $h\in\calh$ if and only if $g_2hg_1^{-1}\in\calh$. Given two measured subgroupoids $\calh,\calh'\subseteq\calg$, one says that $\calh$ is \emph{normalized} by $\calh'$ if there exists a covering of $\calh'$ by countably many Borel subsets $B_n\subseteq\calg$ so that for every $n\in\mathbb{N}$, the groupoid $\calh$ is $B_n$-invariant. One says that $\calh$ is \emph{stably normalized} by $\calh'$ if there exists a partition $Y=\dunion_{i\in I}Y_i$ into at most countably many Borel subsets such that for every $i\in I$, the groupoid $\calh_{|Y_i}$ is normalized by $\calh'_{|Y_i}$.

 An important example is that if $\calg$ is  equipped with a strict cocycle $\rho$ towards a countable group $G$, and if $H_1,H_2\subseteq G$ are two subgroups such that $H_1$ is normalized by $H_2$, then $\rho^{-1}(H_1)$ is normalized by $\rho^{-1}(H_2)$.

Using a theorem of Lusin and Novikov (see \cite[Theorem~18.10]{Kec}), the subsets $B_n$ that arise in the definition of $\calh$ being normalized by $\calh'$ can always be chosen so that the restrictions of the source and range maps to each $B_n$ are Borel isomorphisms to Borel subsets of $Y$.

\paragraph*{Amenability.}

We will adopt the same definition of amenability of a measured groupoid as in \cite{Kid-survey}. The crucial way in which amenability is used is the following: if $\calg$ is a measured groupoid over a base space $Y$ with a cocycle $\rho$ towards a countable group $G$, and if $G$ acts by homeomorphisms on a compact metrizable space $K$, then there exists a $(\calg,\rho)$-invariant Borel map $Y\to\Prob(K)$, where $\Prob(K)$ denotes the space of Borel probability measures on $K$ equipped with the weak-$*$ topology (coming from the duality with the space of real-valued continuous functions on $K$), see \cite[Proposition~4.14]{Kid-survey}. We say that a groupoid $\calg$ over a base space $Y$ is \emph{everywhere nonamenable} if for every Borel subset $U\subseteq Y$ of positive measure, the groupoid $\calg_{|U}$ is nonamenable. We will need a few invariance properties: every subgroupoid of an amenable groupoid is amenable \cite[Theorem~4.16]{Kid-survey} (and every restriction of an amenable groupoid is amenable); also, if $Y^*\subseteq Y$ is a conull Borel subset of the base space and $Y^*=\dunion_{i\in I} Y_i$ is a partition into at most countably many Borel subsets, and if for every $i\in I$, the groupoid $\calg_{|Y_i}$ is amenable, then $\calg$ is amenable. 

\begin{rk}
	\label{rk:Borel} 
Given a separable metrizable topological space $X$, the set $\Prob(X)$ of Borel probability measures on $X$ is endowed with the topology generated by the maps $\mu\mapsto\int fd\mu$, where $f$ varies over the set of all real-valued continuous bounded functions on $X$. When $X$ is compact, the Riesz--Markov--Kakutani theorem identifies $\Prob(X)$ with a subspace of the unit ball of the dual of $C(X;\mathbb{R})$ (the space of real-valued continuous functions on $X$), and under this identification the above topology coincides with the weak-$*$ topology. When $X$ is countable, this is the same as the topology of pointwise convergence. In our applications, the space $X$ will always be either a subspace of a compact metrizable space or a countable set with discrete topology. By \cite[Theorem~17.24]{Kec}, the Borel $\sigma$-algebra on $\Prob(X)$ is the $\sigma$-algebra generated by the maps $\mu\mapsto\mu(A)$, where $A$ varies over the Borel subsets of $X$. As a consequence, every Borel map $f:X\to Y$ induces a Borel map $\Prob(X)\to\Prob(Y)$.
\end{rk}

\subsection{Measured groupoids with cocycles towards free groups}

Throughout the paper, we will extensively use the following observation of Kida.

\begin{lemma}[{Kida \cite[Lemma~3.20]{Kid}}]\label{lemma:everywhere-nonamenable}
Let $G$ be a countable group, and let $\calg$ be a measured groupoid over a standard finite measure space $Y$.

If there exists a strict action-type cocycle $\calg\to G$, and if $G$ contains a nonabelian free subgroup, then $\calg$ is everywhere nonamenable.
\end{lemma}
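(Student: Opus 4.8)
The plan is to reduce to the case where the target group is a nonabelian free group, and then to contradict amenability by exhibiting an invariant probability measure on the Gromov boundary of that free group. First I fix a nonabelian free subgroup $F\subseteq G$ and pass to the subgroupoid $\rho^{-1}(F)\subseteq\calg$. The restricted cocycle $\rho\colon\rho^{-1}(F)\to F$ is again action-type: it has trivial kernel (inherited from $\rho$), and for every infinite subgroup $H\subseteq F$ the preimage $\rho^{-1}(H)$ is of infinite type, because $H$ is also an infinite subgroup of $G$. Since every subgroupoid of an amenable groupoid is amenable, it suffices to prove that $\rho^{-1}(F)$ is everywhere nonamenable: indeed, for any Borel $U\subseteq Y$ of positive measure, if $\calg_{|U}$ were amenable then so would be its subgroupoid $(\rho^{-1}(F))_{|U}$. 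I may therefore assume that $G=F$ is nonabelian free, and I argue by contradiction, supposing that $\calg_{|U}$ is amenable for some $U$ of positive measure. As restriction to $U$ preserves the action-type property, after replacing $\calg$ by $\calg_{|U}$ I am reduced to deriving a contradiction from the assumption that $\calg$ itself is amenable, equipped with an action-type cocycle $\rho\colon\calg\to F$ over a base of positive finite measure.

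Next I would exploit the Gromov boundary $\partial F$, a compact metrizable space on which $F$ acts by homeomorphisms as a nonelementary convergence group: every nontrivial element of $F$ is loxodromic, with north--south dynamics on $\partial F$. Two consequences are key. First, $F$ admits no invariant Borel probability measure on $\partial F$, a standard fact for nonelementary convergence groups. Second, if $\mu_0\in\Prob(\partial F)$ is fixed by a nontrivial element $\gamma\in F$, then applying north--south dynamics to the sequence $\gamma^n_\ast\mu_0$ forces $\mu_0$ to be supported on the two fixed points of $\gamma$; hence any measure whose support has cardinality at least three has trivial $F$-stabilizer, and the stabilizer of an arbitrary measure is elementary, in particular amenable. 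Since $\calg$ is amenable, the invariant-measure tool recalled above, applied with the compact $F$-space $K=\partial F$, provides a $(\calg,\rho)$-invariant Borel map $\phi\colon Y\to\Prob(\partial F)$, that is, $\phi(r(g))=\rho(g)\,\phi(s(g))$ for almost every $g\in\calg$.

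The crux is to convert this fibered equivariance of $\phi$ into a single honest $F$-invariant probability measure on $\partial F$, which contradicts the first consequence above. The mechanism is a barycenter: integrating $\phi$ against the finite base measure produces $m=\int_Y\phi(y)\,d\mu(y)\in\Prob(\partial F)$, and one checks that $m$ is $F$-invariant using the equivariance of $\phi$ together with the measure-preserving structure carried by $\calg$. This is the main obstacle, and it is precisely where the action-type hypothesis is essential. For a genuine free measure-preserving action the computation is immediate, since each $f\in F$ is realized by a measure-preserving transformation of the base carrying $\phi$ equivariantly; for an abstract measured groupoid one must instead extract, from the action-type structure, both enough measure-preserving partial transformations realizing the elements of $F$ almost everywhere and an invariant finite base measure, so that the averaging becomes genuinely $F$-equivariant. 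I stress that producing merely a \emph{stationary} measure would not suffice, as nonelementary free groups do carry stationary measures on their boundary; it is the invariance of the base measure, guaranteed by the action-type hypothesis, that upgrades $m$ to an invariant measure and yields the contradiction. The nondegenerate part of this analysis is controlled by the fact that $F$ acts with trivial stabilizers on triples of distinct boundary points, paralleling the barycenter map on triples of regular points used elsewhere in the paper.
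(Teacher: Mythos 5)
The paper gives no proof of this lemma (it is quoted from Kida), so I assess your argument on its own terms and against the standard proof. Your reductions are correct: passing to $\rho^{-1}(F)$ for a nonabelian free subgroup $F$, noting that the restricted cocycle is still action-type, that restriction to a positive-measure subset preserves this, and extracting from amenability a $(\calg,\rho)$-invariant Borel map $\phi\colon Y\to\Prob(\partial_\infty T)$ (with $T$ the Cayley tree of $F$) are all fine. The gap is the averaging step $m=\int_Y\phi(y)\,d\mu(y)$. First, the action-type hypothesis says nothing about invariance of the base measure: a measured groupoid carries only a quasi-invariant measure, and ``action-type'' is defined purely by trivial kernel plus infinite type of $\rho^{-1}(H)$ for infinite $H$. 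Second, and more fatally, an element $f\in F$ is realized in $\calg$ only as a \emph{partial} transformation: trivial kernel gives at most one $g\in\calg$ with $s(g)=y$ and $\rho(g)=f$, but there need not be any, so the equivariance $\phi(r(g))=\rho(g)\phi(s(g))$ controls $f_\ast\phi$ only on the domain of that partial map. This is exactly the situation in the motivating case $\calg=(F\ltimes Y)_{|U}$ with $U$ a proper positive-measure subset -- which is precisely the case the lemma is about, since the claim is \emph{everywhere} nonamenable: there $f$ moves only $U\cap f^{-1}U$, the integral $\int_U f_\ast\phi\,d\mu$ acquires an uncontrolled contribution from $U\setminus f^{-1}U$, and $m$ is not $F$-invariant. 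Infinite type guarantees infinitely many arrows out of almost every point, not that every group element is realized almost everywhere, so the action-type hypothesis cannot repair this.

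The argument that actually works (Kida's, following Adams, and visible in this paper's proof of Lemma~\ref{lemma:free}) never produces a single invariant measure on the boundary; it analyzes $\phi$ fiberwise. On the part of $Y$ where $\mathrm{supp}\,\phi(y)$ has at least three points, the barycenter map on distinct triples yields an invariant Borel map to $\Prob(V(T))$, hence to finite subsets of $V(T)$; restricting to a positive-measure piece where this map is constant forces the image of the restricted groupoid under $\rho$ to fix a finite set of vertices of $T$, hence to be trivial, contradicting infinite type together with trivial kernel. On the part where $\mathrm{supp}\,\phi(y)$ has at most two points one only gets an invariant map to $\calp_{\le 2}(\partial_\infty T)$, and a genuinely separate argument is required -- Adams' bipolarity analysis, or the three-free-factors trick: applying Lemma~\ref{lemma:free} to $\cala=\calg_{|U}$ itself with $A_1\ast A_2\ast A_3\le F$ makes some $\rho^{-1}(A_i)_{|U'}$ trivial, again contradicting infinite type. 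Your proposal contains no substitute for either half of this case analysis, so as written it does not establish the lemma.
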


\begin{rk}\label{rk:action-type}
 In \cite[Lemma~3.20]{Kid}, Kida assumes that the groupoid $\calg$ preserves a probability measure. In his proof, this assumption is used to ensure that for every infinite cyclic subgroup $A\subseteq G$ and every Borel subset $U\subseteq Y$ of positive measure, the groupoid $\rho^{-1}(A)_{|U}$ is of infinite type. For us, this is automatically ensured by our assumption that the cocycle $\calg\to G$ is action-type. 

We warn the reader about the following example. Let $\partial_\infty F_N\rtimes F_N$ be the measured groupoid coming from the action of a non-amenable free group on its boundary (which quasi-preserves a probability measure, but does not preserve any). Then $\partial_\infty F_N\rtimes F_N$ is amenable: this follows from the universal amenability of the action of a free group on its boundary. But the natural cocycle $\partial_\infty F_N\rtimes F_N\to F_N$ is not of action type.
\end{rk}

 We mention that on the other hand, if there exists a strict cocycle with trivial kernel from $\calg$ to an amenable group, then $\calg$ is amenable (this follows from \cite[Proposition~4.33]{Kid-memoir}, for instance). We will need the following lemma, whose proof relies on techniques introduced by Adams in \cite{Ada}.

\begin{lemma}\label{lemma:free}
Let $A_1,A_2,A_3$ be finitely generated free groups, and let $\calg$ be a measured groupoid over a base space $Y$, equipped with a strict cocycle $\rho:\calg\to A_1\ast A_2\ast A_3$ with trivial kernel. Let $\cala$ be an amenable measured subgroupoid of $\calg$. 

Then there exist $i\in\{1,2,3\}$ and a Borel subset $U\subseteq Y$ of positive measure such that $(\cala\cap \rho^{-1}(A_i))_{|U}$ is trivial.
\end{lemma}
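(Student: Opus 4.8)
The plan is to realise $G=A_1\ast A_2\ast A_3$ as acting on its Bass--Serre tree $T$ associated with the star-shaped graph of groups having a central vertex with trivial group and three leaves carrying $A_1,A_2,A_3$; thus the leaf-vertex stabilizers are exactly the conjugates of the factors $A_i$, while central vertices and all edges have trivial stabilizer. Two elementary facts will drive the argument: first, two distinct leaf stabilizers intersect trivially, since a common fixer would fix an edge and edge-stabilizers are trivial, so in particular $A_i\cap g A_{j} g^{-1}=\{e\}$ whenever $i\neq j$; second, a nontrivial element $a\in A_i$ is loxodromic on the Cayley tree of the free group $G$, with both of its fixed points lying in the limit set $\Lambda_i\subseteq\partial G$ of $A_i$, and the limit sets $g\Lambda_i$ attached to the (countably many) conjugates of the factors are pairwise disjoint. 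We may assume each $A_i$ is nontrivial, for otherwise $\rho^{-1}(A_i)$ is the (trivial) kernel of $\rho$ and that index already works.

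Since $\cala$ is amenable and $G$ acts by homeomorphisms on the compact metrizable space $\partial G$, the invariant-measure property of amenable groupoids recalled above (\cite[Proposition~4.14]{Kid-survey}) produces a $(\cala,\rho)$-invariant Borel map $\phi:Y\to\Prob(\partial G)$, so that $\mu_{r(g)}=\rho(g)_\ast\mu_{s(g)}$ for $\mu_y=\phi(y)$. Pushing forward along the $G$-equivariant collapse map $\partial G\to V_{\mathrm{leaf}}(T)\sqcup\partial T$ (which sends a boundary point lying in some $g\Lambda_i$ to the corresponding leaf, and a generic point to the corresponding end of $T$), I obtain an invariant family $\nu_y$ of probability measures on the countable $G$-set of leaves together with the ends of $T$. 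Combining a combinatorial analysis of the atomic leaf-part (supported on a countable set) with the analysis of equivariant measures on the boundary of a tree introduced by Adams \cite{Ada} for the end-part, I expect that after a countable Borel partition $Y=\bigsqcup_j Y_j$, on each piece there is a $(\cala,\rho)$-invariant map valued in either a vertex of $T$, a single end, or an unordered pair of ends.

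The vertex and leaf-atom cases are the easy ones, and this is exactly where the countability of $V(T)$ is used. Refining the partition by the (countably many) fibers of the invariant map $\psi_j:Y_j\to V(T)$ and by the $G$-invariant vertex type, on each fiber the relation $\psi_j(r(h))=\rho(h)\psi_j(s(h))$ forces $\rho(h)$ into a fixed vertex stabilizer. If the vertex is central this stabilizer is trivial, so $\rho(h)=e$ and, as $\rho$ has trivial kernel, $h=e_{s(h)}$; hence $\cala$ is stably trivial and $(\cala\cap\rho^{-1}(A_i))_{|U}$ is trivial for every $i$ on a positive-measure fiber $U$. If the vertex is a leaf of type $i_0$, then $\rho(h)$ lies in a fixed conjugate of $A_{i_0}$, so for any $i\neq i_0$ the first elementary fact gives $\rho(h)\in A_i\cap(\text{conjugate of }A_{i_0})=\{e\}$, whence $(\cala\cap\rho^{-1}(A_i))_{|U}$ is trivial on a positive-measure fiber $U$. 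In both situations the conclusion holds.

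The end and pair-of-ends cases are the main obstacle, because there the invariant map takes values in the uncountable set $\partial T$, so one cannot partition by fibers, and invariance only records equivariance of $\phi$ rather than the pointwise assertion that $\rho(h)$ fixes an end. The group-theoretic content I want to transfer is that the stabilizer of an end or of a pair of ends of $T$ is purely loxodromic: it contains no nontrivial elliptic element, since fixing a ray would entail fixing infinitely many edges while edge-stabilizers are trivial; consequently it meets each factor $A_i$ trivially. To pass this to the groupoid I would feed the invariant measures back in and use the convergence (north--south) dynamics on $\partial G$: in these cases $\nu_y$, and hence $\mu_y$, is concentrated on generic ends, lying in $\partial G\setminus\bigcup_{i,g}g\Lambda_i$, whereas a nontrivial $a\in A_i$ has north--south dynamics on $\partial G$ with both fixed points in $\Lambda_i$; a nontrivial element of $\cala\cap\rho^{-1}(A_i)$ on a set of positive measure would therefore drag the invariant measure onto $\Lambda_i$, contradicting its concentration on the generic ends. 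This forces $(\cala\cap\rho^{-1}(A_i))_{|U}$ to be trivial on a positive-measure $U$, and making precise this convergence-dynamics transfer---the Adams-type step flagged in the introduction---is the heart of the proof.
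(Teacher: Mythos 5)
Your reduction to the cases where the invariant map $Y\to\Prob(\partial G)$ concentrates on a vertex of the tree, on a single end, or on a pair of ends is the right first move (it is the paper's ``support of cardinality at most $2$'' step, obtained via the barycenter map on pairwise distinct triples), and your treatment of the vertex and leaf-atom cases is correct. But the end and pair-of-ends cases, which you yourself flag as ``the heart of the proof,'' are exactly where the argument breaks down, and the sketch you give does not close the gap. The north--south ``dragging'' argument needs an element $h$ of $\cala\cap\rho^{-1}(A_i)$ with $s(h)=r(h)$, so that invariance reads $\rho(h)_\ast\mu_{s(h)}=\mu_{s(h)}$ and the loxodromic dynamics of $\rho(h)$ force $\mu_{s(h)}$ onto the two fixed points in $\Lambda_i$. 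A measured groupoid can be nontrivial on every positive-measure subset while being principal, i.e.\ having no nontrivial isotropy at almost every point; for such a groupoid every non-unit $h$ satisfies $r(h)\neq s(h)$, invariance only gives $\mu_{r(h)}=\rho(h)_\ast\mu_{s(h)}$, and there is nothing to iterate and no fixed-point conclusion. Relatedly, since the invariant map in this case takes values in an uncountable set, you cannot partition $Y$ by its fibers to extract a pointwise constraint on $\rho$ -- as you note yourself -- so in the generic-ends case your proposal has no remaining mechanism to produce the desired positive-measure set $U$.

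The paper closes precisely this case by a different, combinatorial mechanism. Working in the Cayley tree $T$ of $F=A_1\ast A_2\ast A_3$, with subtrees $T_i$ for $A_i$ and $T_{ij}$ for $A_i\ast A_j$, one assumes for contradiction that all three subgroupoids $\cala_i=\cala\cap\rho^{-1}(A_i)$ are nontrivial in restriction to every positive-measure subset. The barycenter argument then yields, for each of $\cala$, $\cala_i$ and $\cala_{ij}$, invariant maps into $\calp_{\le 2}$ of the corresponding boundary, together with the key extra fact that the \emph{union} of any two invariant $\calp_{\le 2}$-valued maps for the same subgroupoid still has essential cardinality at most $2$. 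Since $\partial_\infty T_i\cap\partial_\infty T_j=\emptyset$, comparing $\alpha_i$, $\alpha_j$ and $\beta_{ij}$ pins, on a common positive-measure set, every invariant map for some $\cala_{i_{12}}$, $\cala_{i_{13}}$, $\cala_{i_{23}}$ into $\calp_{\le 2}(\partial_\infty T_{12})$, $\calp_{\le 2}(\partial_\infty T_{13})$, $\calp_{\le 2}(\partial_\infty T_{23})$ respectively, and the emptiness of $\partial_\infty T_{12}\cap\partial_\infty T_{13}\cap\partial_\infty T_{23}$ gives the contradiction. Note that this is where the hypothesis of \emph{three} free factors is used in an essential way; your star-shaped Bass--Serre tree setup never exploits it, which is a further sign that the end case cannot be finished along the lines you propose.
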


In the following proof, we will use Remark~\ref{rk:Borel} implicitly when we check Borel measurability of maps.

\begin{proof}
For every $i\in\{1,2,3\}$, we let $\cala_i=\cala\cap\rho^{-1}(A_i)$. Given $i\neq j$, we let $\cala_{ij}=\cala\cap\rho^{-1}(A_i\ast A_j)$. Assume towards a contradiction that for every Borel subset $U\subseteq Y$ of positive measure, none of the subgroupoids $(\cala_1)_{|U},(\cala_2)_{|U},(\cala_3)_{|U}$ is trivial (in particular, neither are the restrictions of $\cala$ and of the $\cala_{ij}$ to $U$). Let $F=A_1\ast A_2\ast A_3$. For every $i\in\{1,2,3\}$, let $B_i$ be a free basis of $A_i$, and let $T$ be the Cayley graph of $F$ with respect to the free basis $B_1\cup B_2\cup B_3$, on which $F$ naturally acts by isometries. As $\partial_\infty T$ is compact and metrizable and $\cala$ is amenable, there exists an $(\cala,\rho)$-invariant Borel map $\theta:Y\to \Prob(\partial_\infty T)$. 

We claim that for every such map $\theta$, and almost every $y\in Y$, the support of $\theta(y)$ has cardinality at most $2$. Indeed, otherwise, there exists a Borel subset $U\subseteq Y$ of positive measure such that for every $y\in U$, the support of the probability measure $\theta(y)$ has cardinality at least $3$. Thus, the probability measure $\theta(y)\otimes\theta(y)\otimes\theta(y)$ on $(\partial_\infty T)^3$ gives positive measure to the subset $(\partial_\infty T)^{(3)}$ made of pairwise distinct triples. By restricting and renormalizing, we thus derive an $(\cala_{|U},\rho)$-invariant Borel map $U\to\Prob((\partial_\infty T)^{(3)})$. Denoting by $V(T)$ the vertex set of $T$, there is a natural $F$-equivariant barycenter map $(\partial_\infty T)^{(3)}\to V(T)$. This map is Borel (in fact continuous) with respect to the natural topology on $(\partial_\infty T)^{(3)}$. This yields an $(\cala_{|U},\rho)$-invariant Borel map $U\to\Prob(V(T))$. Let $\calf(V(T))$ denote the countable set of all nonempty finite subsets of $V(T)$. As $V(T)$ is countable, there is an $F$-equivariant map $\Prob(V(T))\to\calf(V(T))$, sending a probability measure $\nu$ to the finite set made of all elements of maximal $\nu$-measure. One readily checks that this map is Borel. We deduce that there is an $(\cala_{|U},\rho)$-invariant Borel map $U\to\calf(V(T))$. Restricting to a positive measure Borel subset $U'\subseteq U$ where this map takes a constant value, we deduce (up to replacing $U'$ by a conull Borel subset) that $\rho(\cala_{|U'})=\{1\}$. As $\rho$ has trivial kernel, it follows that $\cala_{|U'}$ is trivial, a contradiction. 

Let $\calp_{\le 2}(\partial_\infty T)$ (resp.\ $\calp_{=2}(\partial_\infty T)$) denote the set of all nonempty subsets of $\partial_\infty T$ of cardinality at most $2$ (resp.\ equal to $2$). The above argument shows that there exists an $(\cala,\rho)$-invariant Borel map $\theta:Y\to\calp_{\le 2}(\partial_\infty T)$ (obtained after mapping a probability measure to its support). In particular, this map is $(\cala_i,\rho)$-invariant for every $i\in\{1,2,3\}$. Notice also that given two such maps, the above argument also ensures that their union should still take its values (essentially) in $\calp_{\le 2}(\partial_\infty T)$.

For every $i\in\{1,2,3\}$, we let $T_i$ be the Cayley tree of $A_i$ with respect to the free basis $B_i$. For $i\neq j$, we let $T_{ij}$ be the Cayley tree of $A_i\ast A_j$ with respect to the free basis $B_i\cup B_j$. There are natural embeddings $T_i\hookrightarrow T_{ij}\hookrightarrow T$ and $\partial_\infty T_i\hookrightarrow\partial_\infty T_{ij}\hookrightarrow \partial_\infty T$. 

Given distinct $i,j\in\{1,2,3\}$, the following three assertions follow from the same argument as above, using the fact that all subgroupoids $\cala_i$ and $\cala_{ij}$ are amenable and have nontrivial restrictions to any Borel subset of positive measure:  
\begin{enumerate}
\item there exists an $(\cala_i,\rho)$-invariant Borel map $\alpha_i:Y\to\calp_{\le 2}(\partial_\infty T_i)$;
\item there exists an $(\cala_{ij},\rho)$-invariant Borel map $\beta_{ij}:Y\to\calp_{\le 2}(\partial_\infty T_{ij})$ (in particular $\beta_{ij}$ is both $(\cala_i,\rho)$-invariant and $(\cala_j,\rho)$-invariant);
\item given any Borel subset $U\subseteq Y$ of positive measure, the union of any two $((\cala_i)_{|U},\rho)$-invariant Borel maps $U\to\calp_{\le 2}(\partial_\infty T)$ again takes its values (essentially) in $\calp_{\le 2}(\partial_\infty T)$.
\end{enumerate}
The third point ensures that $\alpha_i\cup\beta_{ij}$ takes (essentially) its values in $\calp_{\le 2}(\partial_\infty T_{ij})$. As $\partial_\infty T_i\cap\partial_\infty T_j=\emptyset$ whenever $i\neq j$ (viewed as subsets of $\partial_\infty T_{ij}$), we can find a Borel partition $Y=Y_i\cup Y_j$ such that 
\begin{enumerate}
\item for a.e.\ $y\in Y_i$, the set $\alpha_i(y)\cup\beta_{ij}(y)$ has cardinality exactly $2$; 
\item for a.e.\ $y\in Y_j$, the set $\alpha_j(y)\cup\beta_{ij}(y)$ has cardinality exactly $2$.
\end{enumerate}
In particular, we can find a Borel subset $U\subseteq Y$ of positive measure, and $i_{12}\in\{1,2\}$, such that there exists an $((\cala_{i_{12}})_{|U},\rho)$-invariant Borel map $U\to\calp_{=2}(\partial_\infty T_{12})$. Using the third assertion above, this implies that any $((\cala_{i_{12}})_{|U},\rho)$-invariant Borel map $U\to\calp_{\le 2}(\partial_\infty T)$ must take its values in $\calp_{\le 2}(\partial_\infty T_{12})$. Likewise, up to restricting $U$ to a further positive measure Borel subset, we find $i_{13}\in\{1,3\}$ and $i_{23}\in\{2,3\}$ such that every $((\cala_{i_{13}})_{|U},\rho)$-invariant Borel map $U\to\calp_{\le 2}(\partial_\infty T)$ must take its values in $\calp_{\le 2}(\partial_\infty T_{13})$, and every $((\cala_{i_{23}})_{|U},\rho)$-invariant Borel map $U\to\calp_{\le 2}(\partial_\infty T)$ must take its values in $\calp_{\le 2}(\partial_\infty T_{23})$. As $\partial_\infty T_{12}\cap\partial_\infty T_{13}\cap\partial_\infty T_{23}=\emptyset$, we have reached a contradiction to the existence of an $(\cala_{|U},\rho)$-invariant Borel map $U\to\calp_{\le 2}(\partial_\infty T)$. 
\end{proof}

\begin{cor}\label{cor:free-2}
Let $A_1,A_2,A_3$ be finitely generated free groups, and let $\calg$ be a measured groupoid over a standard finite measure space $Y$, equipped with a strict cocycle $\rho:\calg\to A_1\ast A_2\ast A_3$ with trivial kernel. Let $\cala$ be an amenable measured subgroupoid of $\calg$.

Then there exists a Borel partition $Y=Y_1\dunion Y_2\dunion Y_3$ such that for every $i\in\{1,2,3\}$, the groupoid $(\cala\cap\rho^{-1}(A_i))_{|Y_i}$ is stably trivial.
\end{cor}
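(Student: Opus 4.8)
The plan is to upgrade Lemma~\ref{lemma:free} from producing a \emph{single} positive-measure subset where one of the $\cala_i$ becomes trivial, to producing a full countable Borel partition of $Y$ on which the various $\cala_i$ become trivial piece by piece. The mechanism is a standard exhaustion argument for measured groupoids, so I do not expect the corollary to require any genuinely new idea beyond iterating the lemma; the real content has already been extracted in Lemma~\ref{lemma:free}.

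First I would set up the exhaustion. Lemma~\ref{lemma:free} applied to $\cala$ (and to any restriction $\cala_{|U}$, which is still amenable with cocycle of trivial kernel) guarantees that on \emph{any} positive-measure Borel subset $U\subseteq Y$ there is a further positive-measure $U'\subseteq U$ and an index $i\in\{1,2,3\}$ with $(\cala_i)_{|U'}$ trivial. The plan is to run a maximality/Zorn-type selection: among all countable families $\{(V_k,i_k)\}$ of pairwise disjoint positive-measure Borel sets $V_k\subseteq Y$ together with an index $i_k\in\{1,2,3\}$ such that $(\cala_{i_k})_{|V_k}$ is trivial, choose one whose union $\bigcup_k V_k$ has maximal measure (such a family exists because the measures $\mu(V_k)$ must sum to at most $\mu(Y)<\infty$, so a supremizing sequence of finite subfamilies can be amalgamated into one countable family). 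I claim the union $W=\bigcup_k V_k$ is conull. Indeed, if $\mu(Y\setminus W)>0$, then applying Lemma~\ref{lemma:free} to the restriction $\cala_{|Y\setminus W}$ would produce a positive-measure subset of $Y\setminus W$ on which some $\cala_i$ is trivial, which could be adjoined to the family, contradicting maximality of $\mu(W)$.

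Having obtained a conull $W=\bigsqcup_k V_k$ with each $(\cala_{i_k})_{|V_k}$ trivial, I would regroup: for each $i\in\{1,2,3\}$ set $Y_i=\bigsqcup_{k:\,i_k=i} V_k$, together with any leftover null set thrown into (say) $Y_1$. Then $Y=Y_1\dunion Y_2\dunion Y_3$ is a Borel partition, and by construction each $Y_i$ is itself a countable disjoint union of Borel pieces on which $\cala_i=\cala\cap\rho^{-1}(A_i)$ restricts to a trivial groupoid. This is exactly the definition of $(\cala\cap\rho^{-1}(A_i))_{|Y_i}$ being \emph{stably trivial}, which is what the corollary asserts.

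The only point requiring minor care—and the step I would flag as the main potential obstacle—is the measurable selection needed to make the exhaustion genuinely Borel rather than merely set-theoretic: one must ensure that at each stage the choice of subset and index can be made in a Borel fashion, and that the countable amalgamation of a supremizing sequence stays within the Borel category. This is routine in the measured-groupoid framework (one works up to conull sets and uses that there are only three indices, so no serious uniformization beyond Lusin--Novikov is involved), but it is the place where the argument must be written carefully. Everything else is bookkeeping: disjointness, the finiteness of $\mu$ forcing the supremum to be attained in the limit, and the contradiction with maximality that upgrades ``positive measure'' to ``conull.''
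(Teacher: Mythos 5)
Your proof is correct and follows essentially the same route as the paper's: both are exhaustion arguments that reduce the corollary to Lemma~\ref{lemma:free}, using finiteness of the measure to extract a maximal (countable, up to null sets) family on which some $\cala_i$ is trivial and then deriving a contradiction on the complement. The measurable-selection worry you flag is not actually an issue here, since the exhaustion only involves countably many individually chosen Borel sets and no uniformization; the paper's version streamlines the bookkeeping by directly taking, for each $i$, a Borel set of maximal measure on which $(\cala_i)$ is stably trivial.
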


\begin{proof}
For every $i\in\{1,2,3\}$, let $\cala_i=\cala\cap\rho^{-1}(A_i)$, and let $Y_i$ be a Borel subset of maximal measure such that $(\cala_i)_{|Y_i}$ is stably trivial (this exists because if $(Y_{i,n})$ is a measure-maximizing sequence of such sets, then $\cala_i$ is still stably trivial in restriction to their countable union). It is enough to show that $Y=Y_1\cup Y_2\cup Y_3$ (up to null sets). Otherwise, there would exist a Borel subset $U\subseteq Y$ of positive measure such that for every Borel subset $V\subseteq U$ of positive measure, none of the three subgroupoids $(\cala_1)_{|V},(\cala_2)_{|V},(\cala_3)_{|V}$ is trivial. This contradicts Lemma~\ref{lemma:free} applied to the ambient groupoid $\calg_{|U}$.
\end{proof}

\subsection{Support of a groupoid with a cocycle to a right-angled Artin group}

In the present section, we consider the following general situation: $G$ is a countable group, $\calg$ is a measured groupoid over a finite measure space $Y$, coming with a cocycle $\rho:\calg\to G$, and $\mathbb{P}$ is an \emph{admissible} collection of subgroups of $G$, i.e.\ $\mathbb{P}$ is countable, conjugation-invariant, stable under intersections, $G\in \mathbb {P}$, and there does not exist any infinite descending chain $P_1\supsetneq P_2\supsetneq\cdots$ in $\mathbb{P}$. The group $G$ acts on $\mathbb{P}$ by conjugation. We aim at defining a notion of support for the pair $(\calg,\rho)$, which can be viewed as a canonical assignment of a subgroup in $\mathbb{P}$ to (almost) every point $y\in Y$. In the sequel of the paper, this will always be applied with $G=G_\Gamma$ a right-angled Artin group, and $\mathbb{P}$ the collection of parabolic subgroups with respect to a fixed standard generating set of $G_\Gamma$, see Remark~\ref{rk:support-raag} below. 

\begin{de}
Let $G$ be a countable group, let $\mathcal{G}$ be a measured groupoid over a standard finite measure space $Y$, and let $\rho:\calg\to G$ be a strict cocycle. Let $\mathbb{P}$ be an admissible collection of subgroups of $G$, and let $P\in\mathbb{P}$. We say that $(\calg,\rho)$ is \emph{tightly $P$-supported with respect to $\mathbb{P}$} if the following two conditions hold:
\begin{enumerate}
\item there exists a conull Borel subset $Y^*\subseteq Y$ such that $\rho(\calg_{|Y^*})\subseteq P$;
\item there does not exist a Borel subset $U\subseteq Y$ of positive measure and a subgroup $Q\in\mathbb{P}$ with $Q\subsetneq P$ such that $\rho(\calg_{|U})\subseteq Q$.
\end{enumerate}
\end{de}

A subgroup $P\in\mathbb{P}$ such that $(\calg,\rho)$ is tightly $P$-supported  with respect to $\mathbb{P}$, if it exists, is unique: this follows from the fact that $\mathbb{P}$ is assumed to be stable under intersections. The existence of $P$ is ensured up to a countable partition of the base space $Y$ by the following lemma.

\begin{lemma}\label{lemma:support}
Let $G$ be a countable group, let $\calg$ be a measured groupoid over a standard finite measure space $Y$, and let $\rho:\calg\to G$ be a strict cocycle. Let $\mathbb{P}$ be an admissible collection of subgroups of $G$. 

Then there exists a partition $Y=\dunion_{i\in I}Y_i$ into at most countably many Borel subsets such that for every $i\in I$, there exists $P_i\in\mathbb{P}$ such that $(\calg_{|Y_i},\rho)$ is tightly $P_i$-supported with respect to $\mathbb{P}$.
\end{lemma}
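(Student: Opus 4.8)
The plan is to combine two ingredients: the uniform bound on the length of strictly decreasing chains of parabolic subgroups (Lemma~\ref{lemma:parabolics}(2)), which forces a ``descent'' procedure to terminate, together with a measure-theoretic exhaustion argument that assembles local pieces into a countable partition. Throughout, I read both conditions in the definition of tight support up to conull subsets, which is harmless since restricting a positive measure set to a conull subset keeps it positive measure.

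The heart of the matter is the following local claim: \emph{every Borel subset $U_0\subseteq Y$ of positive measure contains a Borel subset $U\subseteq U_0$ of positive measure such that $(\calg_{|U},\rho)$ is tightly $P$-supported for some parabolic subgroup $P$}. To prove it, I would run a descent. Since $\rho$ always maps into $G$, the pair $(\calg_{|U_0},\rho)$ satisfies condition~(1) of tight support with $P=G$. If it is not tightly $G$-supported, then condition~(2) must fail, so there exist a positive measure $U_1\subseteq U_0$ and a parabolic $Q_1\subsetneq G$ with $\rho(\calg_{|U_1})\subseteq Q_1$. Now $(\calg_{|U_1},\rho)$ satisfies condition~(1) with $P=Q_1$, and I repeat the dichotomy: either it is tightly $Q_1$-supported, or it produces a positive measure $U_2\subseteq U_1$ and a parabolic $Q_2\subsetneq Q_1$. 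This yields a strictly decreasing chain $G=Q_0\supsetneq Q_1\supsetneq Q_2\supsetneq\cdots$ of parabolic subgroups together with nested positive measure sets $U_0\supseteq U_1\supseteq U_2\supseteq\cdots$. By Lemma~\ref{lemma:parabolics}(2) the chain has bounded length, so the descent stops after finitely many steps at a set $U=U_k$ with $(\calg_{|U},\rho)$ tightly $Q_k$-supported, proving the claim.

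To globalize, I would exhaust $Y$ by disjoint tightly supported pieces. Call a positive measure Borel set $A\subseteq Y$ \emph{good} if $(\calg_{|A},\rho)$ is tightly $P$-supported for some parabolic $P$; the local claim says that every positive measure set contains a good set. For a positive measure Borel set $Z$ put $m(Z)=\sup\{\mu(A):A\subseteq Z\text{ good}\}$, which is positive by the local claim. Starting from $Z_0=Y$, I recursively choose good sets $A_{k+1}\subseteq Z_k$ with $\mu(A_{k+1})>\tfrac12 m(Z_k)$ and set $Z_{k+1}=Z_k\setminus A_{k+1}$. Since $\sum_k\mu(A_k)\le\mu(Y)<\infty$, we have $\mu(A_k)\to 0$, hence $m(Z_k)\le 2\mu(A_{k+1})\to 0$; if the leftover $Z_\infty=Y\setminus\bigcup_k A_k$ had positive measure it would contain a good set $U$ with $\mu(U)\le m(Z_k)\to 0$, which is absurd, so $Z_\infty$ is null. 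Setting $Y_i=A_i$ and absorbing the null set $Z_\infty$ into $Y_1$ then yields the partition; this absorption preserves tight $P_1$-support because both conditions in the definition are insensitive to modifying the base by a null set (condition~(1) via the same conull witness, condition~(2) since any would-be smaller support over $Y_1\cup Z_\infty$ restricts to a positive measure subset of $A_1$).

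The main obstacle, and the reason the argument is organized as an exhaustion rather than as a single maximal choice, is that the property ``$\rho(\calg_{|U})\subseteq Q$'' is \emph{not} closed under unions: for $U=U_1\cup U_2$ the restricted groupoid $\calg_{|U}$ contains ``cross'' elements whose source lies in $U_1$ and range in $U_2$, and these need not be sent into $Q$. Thus one cannot simply take, for each parabolic $Q$, a Borel set of maximal measure on which $\rho$ lands in $Q$. Phrasing everything in terms of the existence of tightly supported pieces—a property that \emph{is} inherited by Borel subsets—and then exhausting by measure circumvents this difficulty, while the finite bound on parabolic chains is exactly what guarantees that such pieces exist in every positive measure set.
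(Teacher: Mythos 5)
Your proof is correct, but it follows a genuinely different route from the paper's. The paper works parabolic by parabolic: for each $P\in\mathbb{P}$ it takes a Borel set $U_P\subseteq Y$ of maximal measure admitting a countable Borel partition on each piece of which $\rho(\calg)$ lands in $P$ --- the point being that this \emph{stabilized} version of the containment property, unlike the naive one, \emph{is} closed under countable unions (the cross-elements you worry about are harmless because one only demands containment piecewise), so a maximal such set exists; it then sets $Y_P=U_P\setminus\bigcup_{Q\subsetneq P}U_Q$ and checks that these form the desired partition, using $U_{P_1}\cap U_{P_2}=U_{P_1\cap P_2}$ up to null sets for disjointness and the descending chain condition on parabolics for the covering. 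You instead prove a local existence statement (every positive measure set contains a tightly supported piece, by descent on the parabolic, terminating by Lemma~\ref{lemma:parabolics}(2)) and then globalize by a greedy measure exhaustion. Your obstacle paragraph correctly identifies why the naive maximality argument fails, but note that the paper circumvents it not by exhaustion but by maximizing the union-closed reformulation. What each approach buys: the paper's sets $U_P$ are canonical up to null sets and come with the intersection identity that makes disjointness and the identification of the supporting parabolic on each region transparent; your exhaustion is a more generic measure-theoretic device that avoids having to find the right union-closed property, at the cost of some bookkeeping (the greedy choice, the $m(Z_k)\to 0$ argument, and the absorption of the leftover null set, all of which you handle correctly). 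Both proofs ultimately rest on the finite bound on chains of parabolic subgroups.
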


\begin{proof}
For every $P\in\mathbb{P}$, we claim that there exists a Borel subset $U_P\subseteq Y$ such that 
\begin{enumerate}
\item there exists a conull Borel subset $U_P^*\subseteq U_P$ and a partition $U_P^*=\dunion_{j\in J} U_j$ into at most countably many Borel subsets, such that for every $j\in J$, one has $\rho(\calg_{|U_j})\subseteq P$;
\item the subset $U_P$ is maximal up to measure $0$ with respect to the above property (i.e.\ if $V_P$ is another Borel subset of $Y$ that satisfies this property, then $V_P\setminus U_P$ is a null set).
\end{enumerate}
Indeed, this is shown by taking $U_P$ of maximal (finite) measure satisfying Property~1 (this is possible because if $(U_{P,k})_{k\in\mathbb{N}}$ is a maximizing sequence for the measure, then the countable union of all sets $U_{P,k}$ again satisfies the property).

Now, for every $P\in\mathbb{P}$, we let $Y_P=U_P\setminus\bigcup_{Q\subsetneq P} U_Q$, where the union is taken over all subgroups $Q\in\mathbb{P}$ that are properly contained in $P$: this is a Borel subset of $Y$ because $\mathbb{P}$ is countable. By construction, there exists a partition $Y_P=\dunion_{j\in J}Y_{P,j}$ into at most countably many Borel subsets such that $(\calg_{|Y_{P,j}},\rho)$ is tightly $P$-supported for every $j\in J$. We finally observe that the sets $Y_P$ form a countable partition of $Y$ (up to null sets). This is shown by observing that the set $\{P\in\mathbb{P}|y\in U_P\}$ depends measurably on $y$, is almost everywhere nonempty (as $Y=U_G$ up to null sets), and using the fact that $\mathbb{P}$ does not contain any infinite descending chain, we see that this set almost surely has a minimum $P_y$. Then for almost every $y\in Y$, we have $y\in Y_{P_y}$. Finally, the pairwise intersections of the sets $Y_P$ are null sets because given $P_1,P_2\in\mathbb{P}$, one has $U_{P_1}\cap U_{P_2}=U_{P_1\cap P_2}$ (up to null sets).
\end{proof}

We finally establish a canonicity statement for the support of a groupoid, coming in the form of its invariance by normalizers. 

\begin{lemma}\label{lemma:support-invariant-normal}
Let $G$ be a countable group, let $\calg$ be a measured groupoid over a standard finite measure space $Y$, and let $\rho:\calg\to G$ be a strict cocycle. Let $\mathbb{P}$ be an admissible collection of subgroups of $G$. Let $\calh$ and $\calh'$ be two measured subgroupoids of $\calg$. Assume that there exists $P\in\mathbb{P}$ such that $(\calh,\rho)$ is tightly $P$-supported with respect to $\mathbb{P}$. Assume also that $\calh$ is normalized by $\calh'$.

Then $P$ is $(\calh',\rho)$-invariant: in other words, denoting by $N_G(P)$ the normalizer of $P$ in $G$, there exists a conull Borel subset $Y^*\subseteq Y$ such that $\rho(\calh'_{|Y^*})\subseteq N_G(P)$. 
\end{lemma}

\begin{proof}
As $\calh$ is normalized by $\calh'$, there exists a covering of $\calh'$ by countably many Borel subsets $B_k$ that all leave $\calh$ invariant, with $s(B_k)$ Borel. Up to passing to a further covering, we can assume that for every $k\in\mathbb{N}$, the $\rho$-image of $B_k$ is constant, with value an element $g_k\in G$. Up to replacing $Y$ by a conull Borel subset, we will also assume that $s(B_k)$ has positive measure for every $k\in\mathbb{N}$.

 As a consequence of normality, there exists a conull Borel subset $Y^*\subseteq Y$ such that for every $k\in\mathbb{N}$, one has $\rho(\calh_{|s(B_k)\cap Y^*})\subseteq P\cap g_k^{-1}Pg_k$. As $(\calh,\rho)$ is tightly $P$-supported and $P\cap g_k^{-1}Pg_k$ belongs to $\mathbb{P}$, it follows that $P\subseteq g_k^{-1}Pg_k$. Likewise, up to replacing $Y^*$ by a further conull Borel subset, for every $k\in\mathbb{N}$, one has $\rho(\calh_{|r(B_k)\cap Y^*})\subseteq P\cap g_kPg_k^{-1}$. It follows that $P\subseteq g_kPg_k^{-1}$. Combining the above two inclusions, we derive that $P=g_kPg_k^{-1}$, i.e.\ $g_k\in N_G(P)$. As this is true for every $k$ and the subsets $B_k$ cover $\calh'$, we deduce that $\rho(\calh'_{|Y^{\ast}}) \subseteq N_G(P)$, which concludes our proof. 
\end{proof}

\begin{rk}\label{rk:support-raag}
From now on, we will let $G=G_\Gamma$ be a right-angled Artin group with a standard generating set $S=V\Gamma$, and let $\mathbb{P}$ be the set of all parabolic subgroups of $G$ (which is admissible by Lemma~\ref{lemma:parabolics}). In what follows, parabolic subgroups always refer to parabolic subgroups with respect to $S$. In fact, when $\Gamma$ is transvection-free (as we will ultimately assume in this section), the collection of parabolic subgroups does not depend on the choice of $S$. When $\calg$ is a measured groupoid coming with a cocycle $\rho:\calg\to G$, and $P\in\mathbb{P}$, we will simply say that $(\calg,\rho)$ is \emph{tightly $P$-supported} if it is tightly $P$-supported with respect to the collection $\mathbb{P}$ of all parabolic subgroups.
\end{rk}

\subsection{Taking advantage of amenable normalized subgroupoids}

In this section, we set up arguments towards the groupoid-theoretic version of Property~$(*)$ from the introduction. Following an argument of Adams \cite{Ada}, we will exploit the Borel amenability of the action of a right-angled Artin group on the Roller boundary of the universal cover of its Salvetti complex (\cite[Theorem~5.11]{Duc}, building on \cite{BCGNW}) in a crucial way. We start by reviewing facts about Borel amenability of group actions.

\subsubsection{Review on Borel amenability of group actions}\label{subsubsec:review}

Let $G$ be a countable group, and let $X$ be a  standard Borel space equipped with a $G$-action by Borel automorphisms. The $G$-action on $X$ is \emph{Borel amenable} if there exists a sequence of Borel maps $\nu_n:X\to\Prob(G)$ (where $\Prob(G)$ is equipped with the topology of pointwise convergence) such that for every $x\in X$ and every $g\in G$, one has $||\nu_n(gx)-g\nu_n(x)||_1\to 0$ as $n$ goes to $+\infty$. We will need the following facts about Borel amenability of group actions.
\begin{enumerate}
\item A countable group $G$ is amenable if and only if the trivial $G$-action on a point is Borel amenable.
\item If the $G$-action on $X$ is Borel amenable, then so is its restriction to every $G$-invariant Borel subset of $X$.
\item Let $G$ be a countable group, and let $X$ be a Borel space equipped with a $G$-action by Borel automorphisms. Let $\calp_{\le 2}(X)$ be the set of all nonempty subsets of $X$ of cardinality at most $2$, equipped with its natural structure of standard Borel space coming from that of $X$. Then the $G$-action on $\calp_{\le 2}(X)$ is Borel amenable. This is an easy consequence of the definition, see e.g.\ \cite[Lemma~6.14]{HH}.
\item Let $G_1,\dots,G_k$ be countable groups, and let $X_1,\dots,X_k$ be standard Borel spaces such that for every $i\in\{1,\dots,k\}$, the space $X_i$ comes equipped with a Borel amenable $G_i$-action by Borel automorphisms. Then the product action of $G=G_1\times\dots\times G_k$ on $X=X_1\times\dots\times X_k$ is Borel amenable. Indeed, an elementary computation shows that if $\nu_n^i:X_i\to\Prob(G_i)$ is a sequence of probability measures that witnesses the Borel amenability of the $G_i$-action on $X_i$, then the probability measures $\nu_n$ defined by letting $\nu_n(x_1,\dots,x_k)(g_1,\dots,g_k)=\nu_n^1(x_1)(g_1)\times\dots\times\nu_n^k(x_k)(g_k)$ witness the Borel amenability of the $G$-action on $X$. 
\end{enumerate}
We also refer the reader to \cite[Section~2]{GHL} for a discussion on the relation to other notions of amenability of group actions -- in particular, Borel amenability of a group action implies its \emph{universal amenability} (i.e.\ Zimmer amenability with respect to every quasi-invariant probability measure), another notion that is commonly used in the literature (in particular this is the notion used by Kida in \cite{Kid-memoir}).

\subsubsection{Using amenable normalized subgroupoids}

\begin{lemma}\label{lemma:amenable-normal}
Let $G$ be a right-angled Artin group. Let $\calg$ be a measured groupoid over a standard finite measure space $Y$, and let $\cala$ and $\calh$ be measured subgroupoids of $\calg$. Let $\rho:\calg\to G$ be a strict cocycle, such that $\rho_{|\calh}$ has trivial kernel. Let $P\in\mathbb{P}$ be a parabolic subgroup, and assume that $(\cala,\rho)$ is tightly $P$-supported. Assume that $\cala$ is amenable and normalized by $\calh$. 

Then $\calh\cap\rho^{-1}(P)$ is amenable. If additionally $P^{\perp}$ is amenable, then $\calh$ is amenable. 
\end{lemma}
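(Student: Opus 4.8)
The plan is to reduce both assertions to a single amenability criterion, and to establish that criterion by combining the amenability of $\cala$ with the geometry of the Roller boundary and the normality hypothesis.

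\medskip

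First I would extract the consequences of the hypotheses. Since $\cala$ is tightly $P$-supported and normalized by $\calh$, Lemma~\ref{lemma:support-invariant-normal} gives a conull $Y^*$ with $\rho(\calh_{|Y^*})\subseteq P\times P^\perp$. Write $\calk$ for either $\calh\cap\rho^{-1}(P)$, with target parabolic $Q=P$ (for the first assertion), or $\calh$, with $Q=P\times P^\perp$ (for the second); in both cases $\rho(\calk)\subseteq Q$, the restriction of $\rho$ to $\calk$ has trivial kernel, and $\cala\subseteq\calk$ is amenable and normalized by $\calk$. Consider the de Rham decomposition $Q=A_0\times Q_1\times\cdots\times Q_k$, where $A_0\cong\mathbb Z^{n}$ is the clique factor and each $Q_i=G_{\Gamma_{Q,i}}$ is irreducible and non-abelian (it has at least two non-adjacent vertices, as the clique factor is maximal). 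When $Q=P\times P^\perp$ and $P^\perp$ is amenable, the graph $\Gamma_P^\perp$ is a clique and is absorbed into $A_0$, so the irreducible factors $Q_i$ are exactly those of $P$. In either case I claim that, writing $\pi_i\colon Q\to Q_i$ for the de Rham projection and $\rho_i=\pi_i\circ\rho$, the pair $(\cala,\rho_i)$ is tightly $Q_i$-supported: indeed $\rho_i(\cala)\subseteq Q_i$, and if $\rho_i(\cala_{|U})$ lay in a proper parabolic $R\subsetneq Q_i$ on a positive-measure set $U$, then $\rho(\cala_{|U})$ would lie in the parabolic of $P$ obtained by replacing the $Q_i$-factor by $R$, which is proper, contradicting tight $P$-support.

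\medskip

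The heart of the argument is, for each factor $Q_i$, to produce a $(\cala,\rho_i)$-invariant Borel map $\sigma_i\colon Y\to\calp_{\le 2}(\partial_\reg\widetilde{S}_{\Gamma_{Q,i}})$. Amenability of $\cala$ furnishes (via \cite[Proposition~4.14]{Kid-survey}) a $(\cala,\rho_i)$-invariant map $\theta_i\colon Y\to\Prob(\partial_R\widetilde{S}_{\Gamma_{Q,i}})$. I would first show $\theta_i$ is supported on regular points: if it charged the non-regular part on a positive-measure set, then pushing forward through the map of Corollary~\ref{cor:roller} and selecting the finite set of maximal weight would yield, on a positive-measure subset, a $(\cala,\rho_i)$-invariant finite set $\calf\subseteq\mathbb P_J$ of parabolics; by Lemma~\ref{lemma:setwise vs pointwise} the setwise stabilizer equals the pointwise one, so $\rho_i(\cala)$ would be confined to $\bigcap_{R\in\calf}(R\times R^\perp)$, which is a \emph{proper} parabolic of $Q_i$ because $\Gamma_{Q,i}$ is irreducible (so each such $R$ has proper type and $R\times R^\perp$ cannot equal $Q_i$, lest $\Gamma_{Q,i}$ be a join, cf.\ Lemma~\ref{lem:join subgroup}), contradicting tight $Q_i$-support. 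Next, reusing the barycenter argument from the proof of Lemma~\ref{lemma:free}, I would rule out support of cardinality at least $3$: the barycenter map on triples of pairwise distinct regular points \cite{FLM} would give a $(\cala,\rho_i)$-invariant vertex of $\widetilde{S}_{\Gamma_{Q,i}}$, whose $Q_i$-stabilizer is trivial since $Q_i$ acts simply transitively on vertices; this forces $\rho_i(\cala)=\{1\}$ on a positive-measure set, again contradicting tight $Q_i$-support. Taking supports then yields the desired $(\cala,\rho_i)$-invariant map $\sigma_i$ into $\calp_{\le2}(\partial_\reg\widetilde{S}_{\Gamma_{Q,i}})$.

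\medskip

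The step I expect to be the main obstacle is upgrading $(\cala,\rho_i)$-invariance to $(\calk,\rho_i)$-invariance using only normality. The tool is a \emph{union lemma}: any two $(\cala,\rho_i)$-invariant maps into $\calp_{\le2}(\partial_\reg\widetilde{S}_{\Gamma_{Q,i}})$ have pointwise union still landing in $\calp_{\le2}$, since otherwise the averaged probability measure would have support of cardinality at least $3$, contradicting the bound just established. A standard maximality/exhaustion argument (union over a measure-maximizing sequence) then produces a $\subseteq$-maximal $(\cala,\rho_i)$-invariant map $\Sigma_i\colon Y\to\calp_{\le2}(\partial_\reg\widetilde{S}_{\Gamma_{Q,i}})$. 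Writing the normalization of $\cala$ by $\calk$ through countably many Borel bisections on which $\rho$ is constant, each bisection transports $\Sigma_i$ to another $(\cala,\rho_i)$-invariant map; the delicate point, which is exactly what the $B_n$-invariance in the definition of normality provides, is that these transports are again $(\cala,\rho_i)$-invariant. By maximality each transport is contained in $\Sigma_i$, and running a bisection together with its inverse forces equality, so $\Sigma_i$ is $(\calk,\rho_i)$-invariant. Finally I would assemble the $\Sigma_i$ into a single $(\calk,\rho)$-invariant map $Y\to\{*\}\times\prod_{i=1}^{k}\calp_{\le2}(\partial_\reg\widetilde{S}_{\Gamma_{Q,i}})$, on which $Q$ acts through its de Rham projections. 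Since $A_0$ is amenable and each $Q_i$ acts Borel amenably on $\partial_R\widetilde{S}_{\Gamma_{Q,i}}$ by \cite{NS,Duc}, hence on $\partial_\reg\widetilde{S}_{\Gamma_{Q,i}}$ and on $\calp_{\le2}$ of it, the product action is Borel amenable by the permanence properties recalled in Section~\ref{subsubsec:review}. As $\rho_{|\calk}$ has trivial kernel, the existence of a $(\calk,\rho)$-invariant map into this Borel-amenable $Q$-space implies that $\calk$ is amenable. Taking $\calk=\calh\cap\rho^{-1}(P)$ gives the first assertion, and taking $\calk=\calh$ under the additional hypothesis that $P^\perp$ is amenable gives the second.
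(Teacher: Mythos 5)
Your proposal follows essentially the same route as the paper's proof: project to the irreducible de Rham factors, use amenability of $\cala$ to produce invariant probability measures on the corresponding Roller boundaries, rule out mass on non-regular points via Corollary~\ref{cor:roller} and Lemma~\ref{lemma:setwise vs pointwise} and supports of size at least $3$ via the Fern\'os--L\'ecureux--Math\'eus barycenter, upgrade $(\cala,\rho_i)$-invariance to $(\calh,\rho_i)$-invariance by Adams' maximality-and-transport argument (which the paper simply cites as \cite[Lemmas~3.2 and~3.3]{Ada} and you correctly reconstruct), and conclude from the Borel amenability of the boundary actions. The only inaccuracy is the incidental assertion that $\cala\subseteq\calk$, which is not among the hypotheses (only that $\cala$ is normalized by $\calh$) -- but since your argument only ever uses normalization, this does not affect the proof.
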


\begin{rk}
If $P=\{1\}$, the conclusion is clear. Indeed, as $\rho_{|\calh}$ has trivial kernel, $\calh\cap\rho^{-1}(P)$ is then trivial (whence amenable). In addition, amenability of $P^{\perp}$ is then equivalent to the amenability of $G$, and if $G$ is amenable, then $\calh$ is also amenable since $\rho_{|\calh}$ has trivial kernel. 

We may therefore as well assume that $P\neq\{1\}$. Note that in this case $\cala$ is not trivial, even after restricting it to a positive measure Borel subset: indeed, if $\cala_{|U}$ were trivial for some Borel subset $U\subseteq Y$ of positive measure, then $(\cala,\rho)$ would not be tightly $P$-supported by definition.
\end{rk}

\begin{proof}
The proof relies on an argument that dates back to work of Adams \cite{Ada}. Up to replacing $Y$ by a conull Borel subset, we can assume that $\rho(\cala)\subseteq P$. As $\cala$ is normalized by $\calh$, using Lemma~\ref{lemma:support-invariant-normal} together with the fact that the normalizer of $P$ is $P\times P^{\perp}$, we can also assume that $\rho(\calh)\subseteq P\times P^\perp$. 

Let $P=P_1\times\dots\times P_k\times\mathbb{Z}^m$ be the de Rham decomposition of $P$. For every $i\in\{1,\dots,k\}$, we let $\rho_i:\cala\to P_i$ be the cocycle obtained by postcomposing $\rho$ with the $i^{\text{th}}$ projection. For every $i\in\{1,\dots,k\}$, let $\partial_R \widetilde{S}_i$ be the Roller boundary of the universal cover $\widetilde{S}_i$ of the Salvetti complex of $P_i$, and let $\partial_\reg \widetilde{S}_i$ be the subspace of $\partial_R \widetilde{S}_i$ made of regular points. 

Let $i\in\{1,\dots,k\}$. As $\cala$ is amenable and $\partial_R \widetilde{S}_i$ is compact and metrizable, by \cite[Proposition~4.14]{Kid-survey}, there exists an $(\cala,\rho_i)$-invariant Borel map $\theta:Y\to\Prob(\partial_R \widetilde{S}_i)$ (recall that $\Prob(\partial_R\widetilde{S}_i)$ is equipped with the weak-$*$ topology, coming from the duality with the space of continuous functions on $\partial_R\widetilde{S}_i$). 

We claim that for a.e.\ $y\in Y$, the probability measure $\theta(y)$ gives full measure to the Borel subset $\partial_\reg \widetilde{S}_i$. Indeed, otherwise, there exists a Borel subset $V\subseteq Y$ of positive measure for which we can find an $(\cala_{|V},\rho_i)$-invariant Borel map $V\to\Prob(\partial_R\widetilde{S}_i\setminus\partial_\reg\widetilde{S}_i)$. Let $\mathbb{P}_J(P_i)$ be the set of all nontrivial parabolic subgroups of $P_i$ that are contained in some parabolic subgroup of $P_i$ whose type decomposes nontrivially as a join. 
By Corollary~\ref{cor:roller}, there is a $P_i$-equivariant Borel map $\partial_R\widetilde{S}_i\setminus\partial_\reg\widetilde{S}_i\to \mathbb{P}_J(P_i)$. Therefore, we get an $(\cala_{|V},\rho_i)$-invariant Borel map $V\to\Prob(\mathbb{P}_J(P_i))$. Let $\calf(\mathbb{P}_J(P_i))$ be the set of all nonempty finite subsets of $\mathbb{P}_J(P_i)$. As $\mathbb{P}_J(P_i)$ is countable, there is also a $P_i$-equivariant Borel map $\Prob(\mathbb{P}_J(P_i))\to\calf(\mathbb{P}_J(P_i))$, sending a probability measure $\eta$ to the finite set of all elements of $\mathbb{P}_J(P_i)$ of maximal $\eta$-measure. In summary, we obtain an $(\cala_{|V},\rho_i)$-invariant Borel map $V\to\calf(\mathbb{P}_J(P_i))$. By Lemma~\ref{lemma:setwise vs pointwise}, the setwise $P_i$-stabilizer of every finite subset of $\mathbb{P}_J(P_i)$ coincides with its pointwise stabilizer. By restricting to a positive measure Borel subset $W\subseteq V$ where the above map is constant, we thus get an $(\cala_{|W},\rho_i)$-invariant parabolic subgroup $Q'_i\in\mathbb{P}_J(P_i)$, i.e.\ $\rho_i(\cala_{|W})\subseteq Q'_i\times (Q'_i)^{\perp_i}$ (where $\perp_i$ denotes the orthogonality relation taken with respect to the ambient group $P_i$).

The fact that $Q'_i\in\mathbb{P}_J(P_i)$ means that there exists a proper parabolic subgroup $Q_i\subset P_i$ such that $Q'_i\subseteq Q_i\times Q_i^{\perp_i}$, with both $Q_i$ and $Q_i^{\perp_i}$ nontrivial.
We now claim that $Q'_i\times (Q'_i)^{\perp_i}\subsetneq P_i$. Indeed, if $(Q'_i)^{\perp_i}$ is trivial, then $Q'_i$ is contained in the join subgroup $Q_i\times Q_i^{\perp_i}$ which is properly contained in $P_i$ as the type of $P_i$ does not split as a nontrivial join. If $(Q'_i)^{\perp_i}$ is nontrivial, then $Q'_i\times (Q'_i)^{\perp_i}$ is proper for the same reason. We then have $\rho(\cala_{|W})\subseteq (Q'_i\times (Q'_i)^{\perp_i})\times\prod_{j\neq i} P_j\times \mathbb{Z}^m,$ which is a proper parabolic subgroup of $P$. This contradicts the fact that $(\cala,\rho)$ is tightly $P$-supported.  

We next claim that for a.e.\ $y\in Y$, the support of the probability measure $\theta(y)$ has cardinality at most $2$. Indeed, otherwise, there exists a Borel subset $U\subseteq Y$ of positive measure such that for all $y\in U$, the probability measure $\theta(y)\otimes\theta(y)\otimes\theta(y)$ gives positive measure to the space $(\partial_\reg \widetilde{S}_i)^{(3)}$ made of pairwise distinct triples. Thus, we get an $(\cala_{|U},\rho_i)$-invariant Borel map $U\to \Prob((\partial_\reg \widetilde{S}_i)^{(3)})$. By work of Fern\'os, Lécureux and Mathéus \cite[Lemmas~5.14 and~6.21]{FLM}, there exists a $P_i$-equivariant Borel map $(\partial_\reg \widetilde{S}_i)^{(3)}\to V(\widetilde{S}_i)$ -- where $V(\widetilde{S}_i)$ denotes the vertex set of $\widetilde{S}_i$. 
This yields an $(\cala_{|U},\rho_i)$-invariant Borel map $U\to \Prob(V(\widetilde{S}_i))$. Using the fact that $V(\widetilde{S}_i)$ is countable, we deduce as above an $(\cala_{|U},\rho_i)$-invariant Borel map $\varphi:U\to\calf(V(\widetilde{S}_i))$. Then there exists a Borel subset $W\subseteq U$ of positive measure (in restriction to which $\varphi$ is a constant map), such that $\rho_i(\cala_{|W})=\{1\}$, which is a contradiction to the fact that $(\cala,\rho)$ is tightly $P$-supported, as above.

In fact, the above argument shows that for every $(\cala,\rho_i)$-invariant Borel map $\eta:Y\to\Prob(\partial_\reg \widetilde{S}_i)$ and a.e.\ $y\in Y$, the support of the probability measure $\eta(y)$ has cardinality at most $2$ (this observation will be important when applying Adams' argument right below; it corresponds to Adams' \emph{bipolarity} assumption \cite[Definition~3.1]{Ada}). 

Using an argument due to Adams \cite[Lemmas~3.2 and~3.3]{Ada}, we get a Borel map $Y\to \calp_{\le 2}(\partial_\reg \widetilde{S}_i)$ which is both $(\cala,\rho_i)$-invariant and $(\calh,\rho_i)$-invariant. By combining these maps, we derive an $(\calh,\rho)$-invariant Borel map $$Y\to \calp_{\le 2}(\partial_\reg \widetilde{S}_1)\times\dots\times \calp_{\le 2}(\partial_\reg \widetilde{S}_k).$$ 

By \cite[Theorem~5.11]{Duc}, for every $i\in\{1,\dots,k\}$, the action of $P_i$ on $\partial_R \widetilde{S}_i$ is Borel amenable. Therefore, so are the $P_i$-actions on $\partial_\reg \widetilde{S}_i$ and on $\calp_{\le 2}(\partial_\reg\widetilde{S}_i)$ -- we could alternatively have applied a theorem of Nevo and Sageev \cite[Theorem~7.2]{NS} here. We have a product action of $P=P_1\times\dots\times P_k\times \mathbb{Z}^m$ on $\calp_{\le 2}(\partial_\reg \widetilde{S}_1)\times\dots\times \calp_{\le 2}(\partial_\reg \widetilde{S}_k)\times\{*\}$, where the $\mathbb{Z}^m$ factor is acting trivially on the point $\ast$, and also trivially on the other factors. Combining the fact that the trivial action of $\mathbb{Z}^m$ on a point is Borel amenable, with Fact~4 from Section~\ref{subsubsec:review}, we deduce that the action of $P$ on $\calp_{\le 2}(\partial_\reg \widetilde{S}_1)\times\dots\times \calp_{\le 2}(\partial_\reg \widetilde{S}_k)$ is Borel amenable. As $\rho:\calh \cap\rho^{-1}(P)\to P_1\times\dots\times P_k\times\mathbb{Z}^m$ has trivial kernel, it follows from \cite[Proposition~4.33]{Kid-memoir} (see also \cite[Proposition~3.38]{GH}) that $\calh \cap\rho^{-1}(P)$ is amenable. This shows that the first conclusion of the lemma holds. We deduce the second conclusion in a similar way from the amenability of the action of $P\times P^\perp$ on $\calp_{\le 2}(\partial_\reg \widetilde{S}_1)\times\dots\times \calp_{\le 2}(\partial_\reg \widetilde{S}_k)\times\{*\}$ whenever $P^\perp$ is amenable. 
\end{proof}

We now add a maximality condition. 

\begin{lemma}\label{lemma:amenable-normal-maximal}
Let $G$ be a transvection-free right-angled Artin group. Let $\calg$ be a measured groupoid over a standard finite measure space $Y$, and let $\rho:\calg\to G$ be a strict action-type cocycle. Let $\cala$ and $\calh$ be measured subgroupoids of $\calg$. Let $P\in\mathbb{P}$ be a parabolic subgroup. Assume that 
\begin{enumerate}
\item $\cala$ is amenable, and $(\cala,\rho)$ is tightly $P$-supported;
\item $\calh$ is everywhere nonamenable;
\item $\cala$ is normalized by $\calh$;
\item if $\calh'$ is a measured subgroupoid of $\calg$ which is everywhere nonamenable and normalizes an amenable subgroupoid of $\calg$ of infinite type, and if $\calh\subseteq\calh'$, then $\calh$ is stably equivalent to $\calh'$. 
\end{enumerate}
Then $P$ either has trivial center or is isomorphic to $\mathbb{Z}$. 
\end{lemma}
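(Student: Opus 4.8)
The plan is to argue by contradiction: assume $P$ has nontrivial center and $P\not\cong\mathbb{Z}$, and violate the maximality assumption~(4) by producing a competitor groupoid containing $\calh$ that is not stably equivalent to it. First I would record the consequences of the hypotheses that I will need. Since $\rho$ is action-type it has trivial kernel, so Lemma~\ref{lemma:amenable-normal} applies to $\cala$ and $\calh$: it gives that $\calh\cap\rho^{-1}(P)$ is amenable, and that $P^{\perp}$ is nonamenable (otherwise $\calh$ would be amenable, contradicting condition~(2)). Moreover, as $\cala$ is tightly $P$-supported and normalized by $\calh$, Lemma~\ref{lemma:support-invariant-normal} yields $\rho(\calh)\subseteq P\times P^{\perp}$ on a conull set. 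Writing the de Rham decomposition $P=P_1\times\cdots\times P_k\times\mathbb{Z}^m$, nontriviality of the center means $m\ge 1$; fix a standard generator $v_0$ of the central factor, set $Z_0=\langle v_0\rangle$ and let $P_0$ be the complementary factor, so $P=Z_0\times P_0$ with $Z_0$ central in $P$. If $P_0$ is trivial then $P\cong\mathbb{Z}$ and we are done, so assume $P_0\neq\{1\}$.

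The competitor is $\calh':=\rho^{-1}(Z_0\times Z_0^{\perp})$, where $Z_0\times Z_0^{\perp}=N_G(Z_0)$ by Proposition~\ref{prop:normalizer}. I would check it meets the requirements of~(4) and contains $\calh$. Since $Z_0\times Z_0^{\perp}\supseteq P^{\perp}$ is nonamenable it contains a nonabelian free subgroup, and $\rho$ restricts to an action-type cocycle $\calh'\to Z_0\times Z_0^{\perp}$; so $\calh'$ is everywhere nonamenable by Lemma~\ref{lemma:everywhere-nonamenable}. As $Z_0$ is a direct factor of $Z_0\times Z_0^{\perp}$, the amenable subgroupoid $\rho^{-1}(Z_0)$ is normalized by $\calh'$ and is of infinite type (because $Z_0$ is infinite and $\rho$ action-type). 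Finally $\rho(\calh)\subseteq P\times P^{\perp}\subseteq Z_0\times Z_0^{\perp}$ gives $\calh\subseteq\calh'$. Hence assumption~(4) forces $\calh$ to be stably equivalent to $\calh'$: there is a partition $Y=\dunion_{i}Y_i$ with $\calh_{|Y_i}=\calh'_{|Y_i}$ for all $i$.

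I would then split according to whether $P$ is amenable. If $k\ge 1$, then $P$ is nonamenable; intersecting the stable equivalence with $\rho^{-1}(P)$ and using $P\subseteq Z_0\times Z_0^{\perp}$ shows $\calh\cap\rho^{-1}(P)$ is stably equivalent to $\rho^{-1}(P)$. The latter is everywhere nonamenable by Lemma~\ref{lemma:everywhere-nonamenable} (as $P$ contains a free subgroup), while the former is amenable by Lemma~\ref{lemma:amenable-normal}; since amenability passes to restrictions, this is a contradiction. If instead $k=0$, then $P=\mathbb{Z}^m$ with $m\ge 2$. Here I would first use transvection-freeness to show $P\times P^{\perp}\subsetneq Z_0\times Z_0^{\perp}$ strictly: were it an equality, comparing types and picking a second central generator $w$ one checks $\lk(v_0)\subseteq\st(w)$, a transvection. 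Choosing $\gamma\in(Z_0\times Z_0^{\perp})\setminus(P\times P^{\perp})$, the subgroupoid $\rho^{-1}(\langle\gamma\rangle)$ is of infinite type, so over each piece $Y_i$ it contains elements whose $\rho$-value is a nonzero power $\gamma^\ell$; these lie in $\calh'_{|Y_i}=\calh_{|Y_i}$, forcing $\gamma^\ell\in\rho(\calh)\subseteq P\times P^{\perp}$. But parabolic subgroups are closed under roots by \cite[Lemma~6.4]{minasyan2012hereditary}, so $\gamma\in P\times P^{\perp}$, a contradiction.

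The main obstacle, and the reason for the two-case split, is that one cannot in general enlarge the \emph{target subgroup}: when $P$ has a nonabelian de Rham factor the centralizer $Z_0\times Z_0^{\perp}$ may already equal $P\times P^{\perp}$ (this occurs e.g.\ for $P=\mathbb{Z}\times F_2$ sitting inside a suitable transvection-free $G$), so no new group element is available and the action-type argument of the abelian case breaks down. The key idea is that in this regime the contradiction is instead detected purely groupoid-theoretically, via the clash between the amenability of $\calh\cap\rho^{-1}(P)$ furnished by Lemma~\ref{lemma:amenable-normal} and the everywhere-nonamenability of $\rho^{-1}(P)$; transvection-freeness is then only needed to secure the strict enlargement in the remaining case $P\cong\mathbb{Z}^m$.
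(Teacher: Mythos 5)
Your proof is correct and follows essentially the same route as the paper's: the same competitor groupoid $\calh'=\rho^{-1}(Z\times Z^{\perp})$ for a central cyclic parabolic $Z\subseteq P$, the same amenability clash between Lemma~\ref{lemma:amenable-normal} and Lemma~\ref{lemma:everywhere-nonamenable} to rule out nonabelian $P$, and the same use of transvection-freeness together with the action-type/root-closure argument to handle the abelian case. The only difference is organizational: the paper first proves $Z\times Z^{\perp}=P\times P^{\perp}$ in all cases via the action-type argument and then derives the two contradictions, whereas you split into cases first and invoke the strict-containment contradiction only when $P$ is abelian.
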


\begin{rk}
In the sequel, this will be applied in the weaker form that $P$ is either isomorphic to $\mathbb{Z}$ or nonamenable.
\end{rk}

\begin{proof}
Assume that $P$ has nontrivial center. By the centralizer theorem in \cite{Ser}, we can then find a cyclic parabolic subgroup $Z\subseteq P$ contained in the center of $P$, and we aim to prove that $P=Z$.  

As  $P$ is $(\calh,\rho)$-invariant (Lemma~\ref{lemma:support-invariant-normal}), up to replacing $Y$ by a conull Borel subset, we can assume that $\rho(\calh)$ is contained in the normalizer of $P$, which is equal to $P\times P^{\perp}$ (Proposition~\ref{prop:normalizer}). Moreover $P\times P^{\perp}$ is nonamenable: otherwise, as $\rho$ has trivial kernel, the groupoid $\rho^{-1}(P\times P^{\perp})$ would be amenable, and $\calh$, as a subgroupoid of $\rho^{-1}(P\times P^{\perp})$, would also be amenable. 

Let $P'$ be the centralizer of $Z$ (equal to $Z\times Z^{\perp}$). Then $P'$ is a parabolic subgroup of $G$ that contains $P\times P^\perp$; in particular it is nonamenable. Let $\calh'=\rho^{-1}(P')$. Then $\calh\subseteq\calh'$ and these subgroupoids are everywhere nonamenable. In addition $\calh'$ normalizes $\rho^{-1}(Z)$, an amenable subgroupoid of $\calg$ of infinite type. The maximality assumption thus implies that $\calh$ is stably equivalent to $\calh'$. Using that $\rho$ is action-type, we deduce in particular that each element in $P'$ has a non-zero power which is contained in $P\times P^\perp$, hence $P'\subseteq P\times P^{\perp}$ by Lemma~\ref{lemma:setwise vs pointwise}. Thus $P'=P\times P^\perp$.

We now prove that $P$ is abelian. Otherwise, it contains a nonabelian free group, so Lemma~\ref{lemma:everywhere-nonamenable} implies that $\rho^{-1}(P)$ is everywhere nonamenable. On the other hand, since $\rho$ is action-type, it has trivial kernel, and Lemma~\ref{lemma:amenable-normal} thus implies that $\calh\cap \rho^{-1}(P)$ is amenable. But $\calh\cap\rho^{-1}(P)$ is stably equivalent to $\calh'\cap\rho^{-1}(P)=\rho^{-1}(P)$, so we get a contradiction. 

As $P$ is abelian and $P\times P^{\perp}$ is equal to the centralizer of $Z$, the transvection-free assumption implies that $P=Z$: otherwise, there would exist a transvection that multiplies a generator of $Z$ by a nontrivial element of $P$. This concludes our proof. 
\end{proof}

\subsection{A uniqueness statement}

\begin{lemma}\label{lemma:uniqueness}
Let $G$ be a transvection-free right-angled Artin group. Let $\calg$ be a measured groupoid over a standard finite measure space $Y$, and let $\rho:\calg\to G$ be a strict action-type cocycle. Let $Z,Z'\in\mathbb{P}$ be two cyclic parabolic subgroups of $G$. 

If there exists a Borel subset $U\subseteq Y$ of positive measure such that the $(\calg_{|U},\rho)$-stabilizer of $Z$ is contained in the $(\calg_{|U},\rho)$-stabilizer of $Z'$, then $Z=Z'$.
\end{lemma}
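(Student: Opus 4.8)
The plan is to turn the measure-theoretic containment of stabilizer groupoids into an honest inclusion of subgroups of $G$, and then invoke the purely algebraic Lemma~\ref{lemma:transvection-free}. First I would unwind the definitions. Since $\mathbb{P}$ carries the conjugation action, $\Stab_G(Z)$ is the normalizer $N_G(Z)$, which by Proposition~\ref{prop:normalizer} and the discussion following it equals $Z\times Z^\perp$; likewise $\Stab_G(Z')=Z'\times (Z')^\perp$. Hence the $(\calg_{|U},\rho)$-stabilizer of $Z$ is $(\rho^{-1}(Z\times Z^\perp))_{|U}$, and the hypothesis reads
$$(\rho^{-1}(Z\times Z^\perp))_{|U}\subseteq (\rho^{-1}(Z'\times (Z')^\perp))_{|U}.$$
The goal then becomes to upgrade this to the group-level inclusion $Z\times Z^\perp\subseteq Z'\times (Z')^\perp$: once that is established, applying Lemma~\ref{lemma:transvection-free}(1) with $P=Z'$ (a nontrivial \emph{cyclic} parabolic) immediately gives $Z=Z'$.

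For the upgrade I would fix an arbitrary nontrivial $g\in Z\times Z^\perp$ and show $g\in Z'\times (Z')^\perp$. Since $G$ is torsion-free, $\langle g\rangle$ is infinite, so because $\rho$ is action-type the subgroupoid $\rho^{-1}(\langle g\rangle)$ is of infinite type, and therefore so is its restriction to $U$. In particular, over almost every $y\in U$ there are infinitely many elements with source $y$; using that $\rho$ has trivial kernel, at least one such element $h$ satisfies $\rho(h)\neq 1$. As $\langle g\rangle\subseteq Z\times Z^\perp$, one has $(\rho^{-1}(\langle g\rangle))_{|U}\subseteq (\rho^{-1}(Z\times Z^\perp))_{|U}\subseteq (\rho^{-1}(Z'\times (Z')^\perp))_{|U}$, so $\rho(h)\in\langle g\rangle\cap (Z'\times (Z')^\perp)$. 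Writing $\rho(h)=g^n$ with $n\neq 0$, this exhibits a finite-index subgroup $\langle g^n\rangle$ of $\langle g\rangle$ contained in the parabolic subgroup $Z'\times (Z')^\perp$, which has the form $P\times P^\perp$. By \cite[Lemma~6.4]{minasyan2012hereditary} (exactly as in the proof of Lemma~\ref{lemma:setwise vs pointwise}), this forces $g\in Z'\times (Z')^\perp$, and since $g$ was arbitrary the desired inclusion follows.

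The crux, and the step I expect to be the main obstacle, is precisely this last promotion. The groupoid containment is measure-theoretic and only valid over $U$, so a priori it can only detect that \emph{some} nonzero power $g^n$ lands in the target normalizer, not $g$ itself. Two features make this enough: the action-type hypothesis (via the infinite-type property together with triviality of the kernel) guarantees that a nontrivial group element is genuinely realized by a groupoid element over $U$, and the root-closedness of parabolic subgroups then eliminates the ambiguity of passing to a power. I would take care to verify that no ergodicity or recurrence assumption is secretly required here — it is not, because the infinite-type property already supplies the needed witnesses over every positive-measure subset — and that it is the cyclicity of $Z'$ that selects case (1) rather than case (2) of Lemma~\ref{lemma:transvection-free}, delivering the sharp conclusion $Z=Z'$.
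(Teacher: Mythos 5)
Your proof is correct and relies on exactly the same ingredients as the paper's: the action-type hypothesis (infinite-type subgroupoids plus trivial kernel) to realize a nonzero power of a given group element over $U$, root-closedness of parabolic subgroups of the form $P\times P^\perp$ via \cite[Lemma~6.4]{minasyan2012hereditary}, and Lemma~\ref{lemma:transvection-free}(1) applied with the cyclic parabolic $P=Z'$. The only difference is organizational: the paper argues the contrapositive, using Lemma~\ref{lemma:transvection-free} to produce a single infinite-order element of $Z\times Z^{\perp}$ outside $Z'\times(Z')^{\perp}$ and showing the corresponding infinite-type subgroupoid violates the containment, whereas you promote the groupoid containment to the full group inclusion $Z\times Z^{\perp}\subseteq Z'\times (Z')^{\perp}$ first; this is a cosmetic reorganization of the same argument.
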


\begin{proof}
We will prove the contrapositive statement. Assume that $Z\neq Z'$, and let $U\subseteq Y$ be a Borel subset of positive measure. As $G$ is transvection-free, Lemma~\ref{lemma:transvection-free} ensures that there exists an infinite order element $g$ contained in $Z\times Z^\perp$ but not in $Z'\times (Z')^{\perp}$. As $\rho$ is action-type, the groupoid $(\rho^{-1}(\langle g\rangle))_{|U}$ is of infinite type, and it is contained in the $(\calg_{|U},\rho)$-stabilizer of $Z$ but not in the $(\calg_{|U},\rho)$-stabilizer of $Z'$.
\end{proof}

\subsection{Characterization of vertex stabilizers of the extension graph}\label{sec:vertex}

Given a measured groupoid $\calg$ over a base space $Y$, we say that a measured subgroupoid $\calh$ is \emph{stably maximal} (among subgroupoids of $\calg$) with respect to a property $(\mathsf{P})$ if for every measured subgroupoid $\calh'$ of $\calg$ satisfying $(\mathsf{P})$, if $\calh$ is stably contained in $\calh'$, then $\calh$ is stably equivalent to $\calh'$. 

The following key proposition gives a groupoid-theoretic characterization of vertex stabilizers of the extension graph; Property~2.(a) should be viewed as the groupoid-theoretic analogue of Property~$(*)$ from the introduction. Some intuition on Property~2.(b) coming from the group-theoretic setting is also provided in the introduction, under the heading \emph{A word on the proof of the classification theorem}.

\begin{prop}
	\label{prop:raag}
Let $G$ be a non-cyclic transvection-free right-angled Artin group. Let $\calg$ be a measured groupoid over a standard finite measure space $Y$, and let $\rho:\calg\to G$ be a strict action-type cocycle. Let $\calh\subseteq\calg$ be a measured subgroupoid. Then the following conditions are equivalent.
\begin{enumerate}
\item There exist a conull Borel subset $Y^*\subseteq Y$ and a partition $Y^*=\dunion_{i\in I}Y_i$ into at most countably many Borel subsets such that for every $i\in I$, there exists a cyclic parabolic subgroup $Z_i$ of $G$ such that $\calh_{|Y_i}$ is equal to the $(\calg_{|Y_i},\rho)$-stabilizer of $Z_i$, i.e.\  $\calh_{|Y_i}=(\rho^{-1}(Z_i\times Z_i^{\perp}))_{|Y_i}$.
\item The following two conditions hold:
\begin{enumerate}
	\item the subgroupoid $\calh$ is everywhere nonamenable and stably normalizes an amenable subgroupoid of $\calg$ of infinite type, and $\calh$ is stably maximal (among subgroupoids of $\calg$) with respect to these two properties;
	\item for every Borel subset $U\subseteq Y$ of positive measure, whenever $\calh'\subseteq\calg_{|U}$ is an everywhere nonamenable measured subgroupoid which normalizes an amenable subgroupoid $\cala'$ of $\calg_{|U}$ of infinite type with $\cala'\cap\calh_{|U}$ stably trivial, there exists an everywhere nonamenable measured subgroupoid $\calh''\subseteq\calh_{|U}$ such that $\calh'\cap\calh''$ is stably trivial.
\end{enumerate} 
\end{enumerate}
\end{prop}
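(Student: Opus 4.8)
The plan is to prove the two implications separately, reducing throughout to a single piece of a countable Borel partition of $Y$, since every property in play (everywhere nonamenability, stable normalization, stable maximality, and the intersection condition in~2.(b)) is checked on, and reassembled from, countably many pieces.

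For $(1)\Rightarrow(2)$ I would first reduce to $\calh=\rho^{-1}(Z\times Z^\perp)$ for a single cyclic parabolic $Z$. Property~2.(a) splits into three verifications. Everywhere nonamenability holds because $G$ is transvection-free, so by Servatius' centralizer theorem $Z^\perp$ is nonabelian and $Z\times Z^\perp$ contains a nonabelian free subgroup; Lemma~\ref{lemma:everywhere-nonamenable} then applies since $\rho$ is action-type. That $\calh$ stably normalizes an amenable subgroupoid of infinite type is witnessed by $\cala=\rho^{-1}(Z)$: it is amenable (trivial-kernel cocycle to the amenable group $Z$), of infinite type (as $\rho$ is action-type and $Z$ is infinite), and normalized by $\calh$ since $Z$ is central in $Z\times Z^\perp$. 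For stable maximality, suppose $\calh$ is stably contained in an everywhere nonamenable $\calh'$ stably normalizing an amenable $\cala'$ of infinite type. Decomposing via Lemma~\ref{lemma:support} I may take $\cala'$ tightly $P'$-supported; Lemma~\ref{lemma:support-invariant-normal} gives $\rho(\calh')\subseteq P'\times(P')^\perp$, and pushing infinite-order elements through the action-type hypothesis together with \cite[Lemma~6.4]{minasyan2012hereditary} yields $Z\times Z^\perp\subseteq P'\times(P')^\perp$. If $P'$ is cyclic, Lemma~\ref{lemma:transvection-free}(1) forces $P'=Z$, hence $\calh'\subseteq\rho^{-1}(Z\times Z^\perp)=\calh$ and the two are stably equivalent. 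If $P'$ is noncyclic, Lemma~\ref{lemma:transvection-free}(2) makes $P'\cap Z^\perp$ nonabelian, so $\rho^{-1}((Z\times Z^\perp)\cap P')=\calh\cap\rho^{-1}(P')$ is everywhere nonamenable; but $\cala'$ is also normalized by $\calh\subseteq\calh'$, so Lemma~\ref{lemma:amenable-normal} forces $\calh\cap\rho^{-1}(P')$ to be amenable, a contradiction. This excludes the noncyclic case and proves maximality.

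For Property~2.(b) in this direction, given $\calh'$ normalizing an amenable $\cala'$ of infinite type with $\cala'\cap\calh$ stably trivial, I would again extract $P'$ with $\rho(\calh')\subseteq P'\times(P')^\perp$ and reduce to producing a nonabelian free subgroup $F\le Z\times Z^\perp$ with $F\cap(P'\times(P')^\perp)=\{1\}$: then $\calh''=\rho^{-1}(F)$ is everywhere nonamenable and, as $\rho$ has trivial kernel, $\calh''\cap\calh'$ is stably trivial. The transversality hypothesis $\cala'\cap\calh$ stably trivial, combined with Lemma~\ref{lemma:amenable-normal} (which makes $(Z\times Z^\perp)\cap P'$ amenable) and the de Rham decomposition $Z^\perp=Q_1\times\dots\times Q_k$ into nonabelian irreducible factors, is used to guarantee that $(P'\times(P')^\perp)\cap Z^\perp$ is a \emph{proper} parabolic subgroup of $Z^\perp$; Proposition~\ref{prop:kim-koberda} applied inside $Z^\perp$ (whose de Rham decomposition has no clique factor) then supplies a free subgroup $F\le Z^\perp$ no nontrivial element of which lies in a proper parabolic, and this $F$ is the desired transverse subgroup.

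For the converse $(2)\Rightarrow(1)$ I would feed Property~2.(a) into Lemma~\ref{lemma:amenable-normal-maximal}: writing $\cala$ for the stably normalized amenable subgroupoid of infinite type and decomposing so that $\cala$ is tightly $P$-supported, the four hypotheses of that lemma hold (its maximality hypothesis being precisely stable maximality in~2.(a)), so on each piece $P$ is either isomorphic to $\mathbb{Z}$ or has trivial center and is nonamenable. When $P\cong\mathbb{Z}$, setting $Z=P$ gives $\rho(\calh)\subseteq Z\times Z^\perp$ by Lemma~\ref{lemma:support-invariant-normal}, hence $\calh\subseteq\rho^{-1}(Z\times Z^\perp)$; since $\rho^{-1}(Z\times Z^\perp)$ itself satisfies the two properties of~2.(a) (by the $(1)\Rightarrow(2)$ direction), stable maximality upgrades this to a stable equality $\calh=\rho^{-1}(Z\times Z^\perp)$, which is Condition~1 (uniqueness of $Z$ on each piece being guaranteed by Lemma~\ref{lemma:uniqueness}). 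It remains to exclude the trivial-center case, which is exactly the role of Property~2.(b): there $P$ is nonabelian with trivial center, $P^\perp$ is nonamenable, and $\calh\cap\rho^{-1}(P)$ is amenable by Lemma~\ref{lemma:amenable-normal}, exhibiting $\calh$ as a \emph{generic line times nonabelian factor} configuration morally of the form $\rho^{-1}(\langle g\rangle\times P_2)$; I would then construct a witness $\calh'$ (normalizing a second, transverse amenable direction $\cala'$ with $\cala'\cap\calh$ stably trivial) sharing the nonabelian factor, and show that every everywhere nonamenable $\calh''\subseteq\calh$ must meet $\calh'$, because otherwise the projection cocycle $\calh''\to\langle g\rangle$ would have stably trivial kernel, forcing $\calh''$ to be amenable and contradicting everywhere nonamenability. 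This contradicts~2.(b) and rules out the trivial-center case.

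The main obstacle is Property~2.(b) in both directions, and above all its use in the converse: translating the group-theoretic intuition of the introduction (distinguishing the centralizer of a standard cyclic parabolic from an impostor of the form $\langle g\rangle\times P_2$) into a clean, purely groupoid-theoretic construction of the witness $\calh'$ in the trivial-center case, and verifying that it defeats~2.(b) for \emph{every} everywhere nonamenable $\calh''\subseteq\calh$. The delicate points are extracting the hidden product structure of $\calh$ without reference to group elements and controlling the relevant intersections up to \emph{stably trivial} equivalence; the criterion that a subgroupoid admitting a stably-trivial-kernel cocycle into an amenable group is amenable (hence cannot be everywhere nonamenable) is what makes these arguments close.
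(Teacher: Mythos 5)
Your overall architecture tracks the paper's closely: the proof of $(1)\Rightarrow 2.(a)$ via Lemmas~\ref{lemma:everywhere-nonamenable}, \ref{lemma:support}, \ref{lemma:support-invariant-normal}, \ref{lemma:setwise vs pointwise} and~\ref{lemma:transvection-free}, and the reduction of $(2)\Rightarrow(1)$ to Lemma~\ref{lemma:amenable-normal-maximal} with the cyclic case closed off by maximality and Lemma~\ref{lemma:uniqueness}, are essentially the arguments in the paper. The genuine gap sits exactly where you flag the main obstacle, namely in both uses of Property~2.(b), and it has a single root: you never invoke Lemma~\ref{lemma:free}/Corollary~\ref{cor:free-2}, which is the tool the paper uses whenever no single group element can be chosen uniformly over the base space.

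Concretely, in $(1)\Rightarrow 2.(b)$ you reduce to producing a nonabelian free subgroup $F\le Z^\perp$ with $F\cap(P'\times(P')^\perp)=\{1\}$, on the grounds that the hypothesis ``$\cala'\cap\calh$ stably trivial'' forces $(P'\times(P')^\perp)\cap Z^\perp$ to be a proper parabolic of $Z^\perp$. That implication fails: the hypothesis only rules out $P'\subseteq Z\times Z^\perp$ (equivalently, in the paper's notation, that $M_1=Z^\perp\cap P'$ is trivial), and one can have $Z^\perp\subseteq P'\times(P')^\perp$ while $P'\not\subseteq Z\times Z^\perp$ -- take $\Gamma_{P'}$ to consist of one de~Rham factor of $\lk(\bar v)$ together with a vertex outside $\st(\bar v)$ that is adjacent to all remaining factors (such configurations exist in transvection-free graphs). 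In that situation no transverse $F$ exists at all, since any subgroup of $Z\times Z^\perp$ meeting $P'\times(P')^\perp$ trivially also meets $Z^\perp$ trivially and hence embeds into $(Z\times Z^\perp)/Z^\perp\cong\mathbb{Z}$. This is precisely why the paper splits into two cases and, when $Z^\perp\subseteq P'\times(P')^\perp$, picks $A_1\ast A_2\ast A_3\le M_1$ and uses Corollary~\ref{cor:free-2} to partition the base space into three pieces on which $\calh'\cap\rho^{-1}(A_\ell)$ is stably trivial, defining $\calh''$ \emph{piecewise}. The same omission recurs in the trivial-center case of $(2)\Rightarrow(1)$: you need an amenable $\cala'$ of infinite type inside $\rho^{-1}(P_i)$ with $\cala'\cap\calh$ stably trivial, but no single cyclic subgroup $\langle a\rangle\le P_i$ is guaranteed to work (this is exactly the impostor $\langle g\rangle\times P_2$ you are trying to defeat), so Corollary~\ref{cor:free-2} is again needed to choose the generator piecewise. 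Finally, the closing amenability of $\calh''$ there cannot come from a cocycle to $\langle g\rangle$, since $\calh''$ only maps into $P_i\times P_i^\perp$ and not into $\langle g\rangle\times P_2$; the paper instead projects to $P_i$, notes that the kernel of this projection lies in $\calh'\cap\calh''$ and is therefore stably trivial, and then applies Lemma~\ref{lemma:amenable-normal} to $\calh''$ together with the normalized, tightly $P_i$-supported subgroupoid $\cala$.
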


 In the sequel, a subgroupoid $\calh$ of $\calg$ satisfying one of the equivalent conclusions of Proposition~\ref{prop:raag} will be called a \emph{$(\calg,\rho)$-vertex subgroupoid}: this terminology suggests that it plays the role of a vertex stabilizer of the extension graph $\Gamma^e$ -- where $\Gamma$ is the underlying graph of $G$. 
 
 Notice that Proposition~\ref{prop:raag} gives a purely groupoid-theoretic characterization of $(\calg,\rho)$-vertex subgroupoids, with no reference to the cocycle $\rho$; therefore, being a $(\calg,\rho)$-vertex subgroupoid is a notion that does not depend on the choice of a strict action-type cocycle $\rho:\calg\to G$. Notice also that a $(\calg,\rho)$-vertex subgroupoid $\calh$ naturally comes with a Borel map $\theta:Y\to V\Gamma^e$, by letting $\theta(y)=Z_i$ whenever $y\in Y_i$. Up to measure $0$, this map does not depend on the choice of a Borel partition as in the first conclusion of Proposition~\ref{prop:raag}, in view of the uniqueness statement given in Lemma~\ref{lemma:uniqueness}. We call it the \emph{parabolic map} of $(\calh,\rho)$ -- we insist here that this map \emph{does} depend on $\rho$.

\begin{proof}
We first prove that $1\Rightarrow 2.$ Assume that $\calh$ satisfies assertion~1. Without loss of generality, we can assume that all subsets $Y_i$ in the given partition of $Y^*$ have positive measure.

We first prove that $\calh$ satisfies assertion~2.(a). As $G$ is transvection-free and not isomorphic to $\mathbb{Z}$, for every $i\in I$, the group $Z_i\times Z_i^{\perp}$ contains a nonabelian free subgroup. As $\rho$ is action-type, Lemma~\ref{lemma:everywhere-nonamenable} shows that $\calh_{|Y_i}$ is everywhere nonamenable. Thus $\calh$ is everywhere nonamenable. Let now $\cala$ be a measured subgroupoid of $\calg$ such that for every $i\in I$, one has  $\cala_{|Y_i}=(\rho^{-1}(Z_i))_{|Y_i}$. Then $\cala$ is stably normalized by $\calh$, and as $Z_i$ is cyclic and $\rho$ has trivial kernel the groupoid $\cala$ is amenable -- and it is of infinite type because $\rho$ is action-type. 

Let us now prove the maximality condition from assertion~2.(a). For this, let $\hat{\calh}$ be a subgroupoid of $\calg$ which is everywhere nonamenable, stably normalizes an amenable subgroupoid $\hat{\cala}$ of $\calg$ of infinite type, and such that $\calh$ is stably contained in $\hat{\calh}$. By Lemma~\ref{lemma:support}, for every $i\in I$, we can find a partition $Y_i=\dunion_{j\in J_i} V_{i,j}$ into at most countably many Borel subsets of positive measure such that for every $j\in J_i$, there exists a parabolic subgroup $P_{i,j}$ such that $(\hat\cala_{|V_{i,j}},\rho)$ is tightly $P_{i,j}$-supported, and $P_{i,j}$ is nontrivial since $\hat\cala$ is of infinite type and $\rho$ has trivial kernel. As $\hat\cala$ is stably normalized by $\hat\calh$, up to refining the above partition, we can further assume that for every $j\in J_i$, the groupoid $\hat\cala_{|V_{i,j}}$ is normalized by $\hat\calh_{|V_{i,j}}$. Lemma~\ref{lemma:support-invariant-normal} then implies that up to replacing $Y_i$ by a conull Borel subset, for every $j\in J_i$, one has $\rho(\hat\calh_{|V_{i,j}})\subseteq P_{i,j}\times P_{i,j}^{\perp}$. 
As $\hat\calh$ is everywhere nonamenable, Lemma~\ref{lemma:amenable-normal} implies that all groups $P_{i,j}^{\perp}$ are nonamenable, and $(\hat\calh\cap\rho^{-1}(P_{i,j}))_{|V_{i,j}}$ is amenable.

We now fix $i\in I$ and $j\in J_i$. As $\calh$ is stably contained in $\hat\calh$ and $\rho$ is action-type, we know that for every $g\in Z_i\times Z_i^\perp$, there exists an integer $k\neq 0$ such that $g^k\in P_{i,j}\times P_{i,j}^{\perp}$. Lemma~\ref{lemma:setwise vs pointwise} then shows that $Z_i\times Z_i^\perp\subseteq P_{i,j}\times P_{i,j}^{\perp}$. Since $G$ is transvection-free and $P_{i,j}$ is nontrivial, Lemma~\ref{lemma:transvection-free} ensures that either $P_{i,j}=Z_i$, or else $P_{i,j}\cap Z_i^{\perp}$ is nonamenable. But if the latter holds, then $(\calh\cap\rho^{-1}(P_{i,j}\cap Z_i^\perp))_{|V_{i,j}}$ (which is everywhere nonamenable) is not stably contained in $(\hat\calh\cap\rho^{-1}(P_{i,j}\cap Z_i^{\perp}))_{|V_{i,j}}$ (which is amenable), a contradiction. This shows that $P_{i,j}=Z_i$. 

Since the above is true for every $i\in I$ and every $j\in J_i$, we deduce that $\hat\calh$ is stably contained in $\calh$, so they are stably equivalent. This finishes the proof of assertion~2.(a).  

We now turn to proving assertion~2.(b). Let $U\subseteq Y$ be a Borel subset of positive measure, and $\cala'$ and $\calh'$ be measured subgroupoids of $\calg_{|U}$ as in the statement. For every $i\in I$, we let $U_i=U\cap Y_i$. By Lemma~\ref{lemma:support}, there exists a partition $U_i=\dunion_{n\in N} U_{i,n}$ into at most countably many Borel subsets such that for every $n\in N$, there exists a parabolic subgroup $P_{i,n}$ such that $(\cala'_{|U_{i,n}},\rho)$ is tightly $P_{i,n}$-supported. Up to replacing $U_i$ by a conull Borel subset and refining the above partition, Lemma~\ref{lemma:support-invariant-normal} allows us to assume that for every $n\in N$, we have $\rho(\calh'_{|U_{i,n}})\subseteq P_{i,n}\times P_{i,n}^{\perp}$ (and we can also assume that $\rho(\cala'_{|U_{i,n}})\subseteq P_{i,n}$).

We aim to construct an everywhere nonamenable subgroupoid $\calh''$ of $\calh_{|U}$ such that $\calh'\cap\calh''$ is stably trivial. For this, we will only define $\calh''$ on each $U_{i,n}$ -- and $\calh''$ will have no elements with source and range in different subsets $U_{i,n}$. 

We will first define $\calh''_{|U_{i,n}}$ for values of $i,n$ such that $Z^\perp_i$ is not contained in $P_{i,n}\times P^\perp_{i,n}$. As $G$ is transvection-free, the group $Z_i^\perp$ has no abelian factor in its de Rham decomposition. By Proposition~\ref{prop:kim-koberda}, we can (and shall) thus choose elements $a_{1},a_{2}\in Z_i^{\perp}$ that freely generate a rank $2$ free subgroup, so that the support of every nontrivial element of $\langle a_{1},a_{2}\rangle$ is equal to $Z_i^\perp$. Let $\calh''_{|U_{i,n}}=(\rho^{-1}(\langle a_{1},a_{2}\rangle))_{|U_{i,n}}$. Clearly $\calh''_{|U_{i,n}}\subseteq\calh_{|U_{i,n}}$. Since $a_{1}$ and $a_{2}$ generate a nonabelian free subgroup and $\rho$ is action-type, Lemma~\ref{lemma:everywhere-nonamenable} implies that $\calh''_{|U_{i,n}}$ is everywhere nonamenable. In addition, as $Z^\perp_i$ is not contained in $P_{i,n}\times P^\perp_{i,n}$, it follows that $\rho(\calh''_{|U_{i,n}}\cap \calh'_{|U_{i,n}})$ is the trivial subgroup. As $\rho$ has trivial kernel, $\calh''_{|U_{i,n}}\cap \calh'_{|U_{i,n}}$ is trivial.

We will now define $\calh''_{|U_{i,n}}$ for values of $i,n$ such that $Z^\perp_i\subseteq P_{i,n}\times P^\perp_{i,n}$.  Similar to the proof of Proposition~\ref{prop:kim-koberda}, we have $Z^\perp_i= M_1\times M_2$ where $M_1=Z^\perp_i\cap P_{i,n}$ and $M_2=Z^\perp_i\cap P^\perp_{i,n}$ are parabolic subgroups. We claim that $M_1$ is nontrivial. Otherwise we have $Z^\perp_i\subseteq P^\perp_{i,n}$. Thus $P_{i,n}\subseteq (Z^\perp_i)^\perp=Z_i$, with the last equality following from transvection-freeness. Thus $\cala'_{|U_{i,n}}\subseteq \calh_{|U_{i,n}}$. As $\cala'$ is of infinite type, this contradicts our assumption that $\cala'\cap \calh$ is stably trivial, thereby proving our claim. As $G$ is transvection-free, $Z^\perp_i$ does not have any abelian de Rham factor, and neither does $M_1$, in particular $M_1$ is nonabelian. We can thus find three finitely generated nonabelian free subgroups $A_1,A_2,A_3\subseteq M_1$ such that $\langle A_1,A_2,A_3\rangle=A_1\ast A_2\ast A_3$. Notice that  $(\calh'\cap\rho^{-1}(\langle A_1,A_2,A_3\rangle))_{|U_{i,n}}\subseteq (\calh'\cap\rho^{-1}(M_1))_{|U_{i,n}}\subseteq (\calh'\cap\rho^{-1}(P_{i,n}))_{|U_{i,n}}$. By Lemma~\ref{lemma:amenable-normal}, the groupoid $(\calh'\cap \rho^{-1}(P_{i,n}))_{|U_{i,n}}$ is amenable. Therefore $(\calh'\cap\rho^{-1}(\langle A_1,A_2,A_3\rangle))_{|U_{i,n}}$ is also amenable. By Corollary~\ref{cor:free-2}, we can thus find a Borel partition $U_{i,n}=U_{i,n,1}\dunion U_{i,n,2}\dunion U_{i,n,3}$ such that for every $\ell\in\{1,2,3\}$, the groupoid $(\calh'\cap\rho^{-1}(A_\ell))_{|U_{i,n,\ell}}$ is stably trivial. We then let $\calh''_{|U_{i,n}}$ be a measured subgroupoid of $\calh_{|U_{i,n}}$ such that for every $\ell\in\{1,2,3\}$, one has $\calh''_{|U_{i,n,\ell}}=(\rho^{-1}(A_\ell))_{|U_{i,n,\ell}}$. Then $\calh'_{|U_{i,n}}\cap\calh''_{|U_{i,n}}$ is stably trivial. In addition, as every $A_\ell$ is a nonabelian free group, Lemma~\ref{lemma:everywhere-nonamenable} implies that $\calh''_{|U_{i,n}}$ is everywhere nonamenable. We have thus constructed a subgroupoid $\calh''$ satisfying the conclusion of assertion~2.(b). This concludes  our proof of $1\Rightarrow 2$.

\medskip

We now prove that $2\Rightarrow 1$. Assume that $\calh$ satisfies assertions~2.(a) and~2.(b), and let $\cala$ be an amenable subgroupoid of $\calg$ of infinite type which is stably normalized by $\calh$. Lemma~\ref{lemma:support} gives a partition $Y=\dunion_{i\in I} Y_i$ into at most countably many Borel subsets of positive measure such that for every $i\in I$, there exists a parabolic subgroup $P_i$ such that $(\cala_{|Y_i},\rho)$ is tightly $P_i$-supported, and $P_i$ is nontrivial since $\cala$ is of infinite type and $\rho$ has trivial kernel. Up to refining this partition if needed, we can further assume that for every $i\in I$, the groupoid $\cala_{|Y_i}$ is normalized by $\calh_{|Y_i}$. We will prove that for every $i\in I$, the group $P_i$ is cyclic: by Lemma~\ref{lemma:support-invariant-normal}, the groupoid $\calh_{|Y_i}$ will then be contained in the $(\calg_{|Y_i},\rho)$-stabilizer of $P_i$ (up to replacing $Y_i$ by a conull Borel subset), and the maximality assumption from assertion~2.(a) together with $1\Rightarrow 2.(a)$ will complete the proof.

So let $i\in I$, and assume towards a contradiction that $P_i$ is noncyclic. Lemma~\ref{lemma:amenable-normal} ensures that $P_i^\perp$ is nonamenable, and Lemma~\ref{lemma:amenable-normal-maximal} together with assertion~2.(a) ensure that $P_i$ is nonamenable (notice that if we replace each groupoid appearing in this assertion by its restriction on $Y_i$, then the assertion still holds). Up to replacing $Y_i$ by a conull Borel subset, we can assume that $\rho(\cala_{|Y_i})\subseteq P_i$, and Lemma~\ref{lemma:support-invariant-normal} enables us to further assume that $\rho(\calh_{|Y_i})\subseteq P_i\times P_i^\perp$. Notice also that Lemma~\ref{lemma:amenable-normal} implies that $(\calh\cap\rho^{-1}(P_i))_{|Y_i}$ is amenable.

Let $\calh'=\rho^{-1}(P_i^{\perp})_{|Y_i}$. As $P_i^{\perp}$ contains a nonabelian free subgroup and $\rho$ is action-type, Lemma~\ref{lemma:everywhere-nonamenable} ensures that $\calh'$ is everywhere nonamenable. As $P_i$ is nonamenable, we can find three finitely generated nonabelian free subgroups $A_1,A_2,A_3\subseteq P_i$ such that $\langle A_1,A_2,A_2\rangle=A_1\ast A_2\ast A_3$. By Corollary~\ref{cor:free-2} (applied to the amenable groupoid $(\calh\cap\rho^{-1}(P_i))_{|Y_i}$), we can find a Borel partition $Y_i=Y_{i,1}\dunion Y_{i,2}\dunion Y_{i,3}$ such that for every $\ell\in\{1,2,3\}$, the groupoid $(\calh\cap\rho^{-1}(A_\ell))_{|Y_{i,\ell}}$ is stably trivial. For every $\ell\in\{1,2,3\}$, let $a_\ell\in A_\ell$ be a nontrivial element. Let then $\cala'$ be a measured subgroupoid of $\calg_{|Y_i}$ such that for every $\ell\in\{1,2,3\}$, one has $\cala'_{|Y_{i,\ell}}=\rho^{-1}(\langle a_\ell\rangle)_{|Y_{i,\ell}}$. Then $\cala'$ is amenable and of infinite type (as $\rho$ is action-type), it is stably normalized by $\calh'$, and $\cala'\cap\calh_{|Y_i}$ is stably trivial. 

We now aim to contradict assertion~2.(b) with $U=Y_i$. For that, we let $\calh''$ be a measured subgroupoid of $\calh_{|Y_i}$ such that $\calh'\cap\calh''$ is stably trivial, and we aim to prove that there exists a Borel subset $V\subseteq Y_i$ of positive measure such that $\calh''_{|V}$ is amenable.

As $\calh''\subseteq\calh_{|Y_i}$, one has $\rho(\calh'')\subseteq P_i\times P_i^{\perp}$. We post-compose $\rho:\calh''\to P_i\times P_i^{\perp}$ with the projection $P_i\times P_i^{\perp}\to P_i$ to obtain a  strict cocycle $\rho':\calh''\to P_i$. The kernel of $\rho'$ is the set of all elements $g\in\calh''$ such that $\rho(g)\in P_i^{\perp}$, a subgroupoid of $\calh'$. Being a subgroupoid of $\calh''\cap\calh'$, it is stably trivial. We can therefore find a Borel subset $V\subseteq Y_i$ of positive measure such that $\rho':\calh''_{|V}\to P_i$ has trivial kernel. As $\calh''_{|V}\subseteq\calh_{|V}$, it normalizes $\cala_{|V}$. We can therefore apply Lemma~\ref{lemma:amenable-normal} to the subgroupoid of $\calg_{|V}$ generated by $\calh''_{|V}$ and $\cala_{|V}$, and to its natural cocycle towards $P_i$, to derive that $\calh''_{|V}$ is amenable, as desired.
\end{proof}

\subsection{Characterization of adjacency}

In Section~\ref{sec:vertex}, given a measured groupoid $\calg$ with a strict action-type cocycle towards a transvection-free right-angled Artin group $G$ not isomorphic to $\mathbb{Z}$, we characterized subgroupoids of $\calg$ that stabilize  rank one parabolic subgroups of $G$ (up to a countable Borel partition of the base space). In the present section, given two such subgroupoids, we will characterize when the parabolic subgroups they stabilize commute -- i.e.\ define adjacent vertices of the extension graph. This is the contents of Lemma~\ref{lemma:adjacency-groupoids} below; we start with a group-theoretic version of this statement.

\begin{lemma}
	\label{lem:fix point}
Let $\Gamma$ be a finite simple graph, let $G=G_\Gamma$, and let $\Gamma^e$ be the extension graph of $\Gamma$, equipped with its natural $G$-action. Let $v,w\in V\Gamma^e$ be two vertices.

Then $v$ and $w$ are adjacent or equal in $\Gamma^e$ if and only if there are finitely many vertices $u\in V\Gamma^e$ such that every element of $\Stab_G(v)\cap\Stab_G(w)$ has a nontrivial power that fixes $u$. In this case there are in fact at most $|V\Gamma|$ such vertices.
\end{lemma}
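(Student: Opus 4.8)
The plan is to first reformulate the counting condition combinatorially. For a vertex $u\in V\Gamma^e$ with associated cyclic parabolic subgroup $Z_u$, the stabilizer $\Stab_G(u)$ is the normalizer $N_G(Z_u)=Z_u\times Z_u^{\perp}$, which is a parabolic subgroup. If some nontrivial power $g^k$ of an element $g$ fixes $u$, then $\langle g\rangle$ has a finite-index subgroup inside the parabolic subgroup $Z_u\times Z_u^{\perp}$, so $g\in Z_u\times Z_u^{\perp}$ by \cite[Lemma~6.4]{minasyan2012hereditary} (exactly as in the proof of Lemma~\ref{lemma:setwise vs pointwise}); thus $g$ already fixes $u$. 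Setting $H=\Stab_G(v)\cap\Stab_G(w)$, the set of vertices to be counted is therefore precisely the fixed-point set $\Fix_{\Gamma^e}(H)=\{u\in V\Gamma^e:H\subseteq\Stab_G(u)\}$, and the statement becomes: $v$ and $w$ are adjacent or equal if and only if $\Fix_{\Gamma^e}(H)$ is finite, with at most $|V\Gamma|$ elements in that case.

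The key observation is that since $Z_u$ is cyclic, $\Stab_G(u)=N_G(Z_u)$ equals the centralizer $C_G(Z_u)$; hence $H\subseteq\Stab_G(u)$ is equivalent to $Z_u\subseteq C_G(H)$. Thus $\Fix_{\Gamma^e}(H)$ is exactly the set of cyclic parabolic subgroups contained in $C_G(H)$. Now $H$ is an intersection of two parabolic subgroups, hence parabolic (Lemma~\ref{lemma:parabolics}(1)), and the centralizer of a parabolic subgroup is again parabolic: after conjugating $H$ to a standard subgroup $G_S$, one has $C_G(H)=Z(G_S)\times G_{S^{\perp}}=G_{S_0\circ S^{\perp}}$, where $S_0$ is the clique factor of $S$. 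By Remark~\ref{remark:parabolic in parabolic}, the cyclic parabolic subgroups of $G$ contained in $C_G(H)$ are exactly the cyclic parabolic subgroups of $C_G(H)$; a parabolic subgroup has finitely many of these precisely when it is abelian (its type is a clique), in which case their number equals its rank, and it has infinitely many otherwise. So the entire statement reduces to showing that $C_G(H)$ is abelian if and only if $v$ and $w$ are adjacent or equal, and that its rank is at most $|V\Gamma|$.

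For the forward direction I would conjugate so that $Z_v=\langle a\rangle$ is standard, and when $v,w$ are adjacent also $Z_w=\langle b\rangle$ with $ab$ an edge of $\Gamma$. Then $H=G_{\st(a)}$ (if $v=w$) or $H=G_{\st(a)}\cap G_{\st(b)}=G_{\st(a)\cap\st(b)}$ (if adjacent), a standard parabolic of type $S$ with $a\in S$ (and $b\in S$). Any vertex in $S^{\perp}$ is adjacent to $a\in S$ (and to $b\in S$), hence lies in $\st(a)$ (resp.\ $\st(a)\cap\st(b)$), i.e.\ in $S$, contradicting that $S^{\perp}$ is disjoint from $S$; thus $S^{\perp}=\varnothing$ and $C_G(H)=Z(G_S)=G_{S_0}$ is abelian of rank $|S_0|\le|V\Gamma|$, giving the bound. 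For the converse, suppose $v\neq w$ are non-adjacent, so that $Z_v$ and $Z_w$ do not commute. Both are contained in $C_G(H)$ (every element of $H$ centralizes $Z_v$ and $Z_w$), so $C_G(H)$ contains two non-commuting generators and is nonabelian; explicitly, writing $z_w$ for a generator of $Z_w$, the conjugates $z_w^{n}Z_v z_w^{-n}$ for $n\in\bbZ$ are pairwise distinct cyclic parabolic subgroups contained in $C_G(H)$ — distinct because $z_w^{k}\in N_G(Z_v)$ would force $z_w\in N_G(Z_v)$ by \cite[Lemma~6.4]{minasyan2012hereditary}, contradicting non-commutation — so $\Fix_{\Gamma^e}(H)$ is infinite.

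The main obstacle is the reformulation in the second paragraph: correctly identifying $\Stab_G(u)$ with the centralizer $C_G(Z_u)$ (which uses that cyclic parabolics admit no element conjugating a generator to its inverse), establishing that the centralizer of a parabolic subgroup is parabolic with the displayed $Z(G_S)\times G_{S^{\perp}}$ form, and pinning down the exact count of cyclic parabolics inside an abelian parabolic as its rank. Once this dictionary is in place, both directions are short, the essential point being that $v$ and $w$ commute exactly when the two generators survive inside $C_G(H)$ without forcing it to be nonabelian.
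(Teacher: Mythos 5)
Your proof is correct, and the forward direction (adjacent or equal implies at most $|V\Gamma|$ fixed vertices) is essentially the paper's argument in different clothing: the paper shows $Z_u\subseteq N_G(H)=H$ and then that $Z_u$ lies in the center of $H=G_{\bar e\circ\bar e^{\perp}}$, namely the clique factor, while you show $S^{\perp}=\varnothing$ and conclude that $C_G(H)=Z(G_S)=G_{S_0}$ is abelian; both reductions rest on Proposition~\ref{prop:normalizer}, Servatius's description of centers, and the Minasyan lemma step from the proof of Lemma~\ref{lemma:setwise vs pointwise}. Where you genuinely diverge is the converse direction. The paper invokes Kim--Koberda's theorem (\cite[Theorem~1.3]{kim2013embedability}), embedding $G_\Gamma$ into a mapping class group so that $g_v,g_w$ become powers of Dehn twists about intersecting curves, to produce a nonabelian free subgroup $F\subseteq\langle Z_v,Z_w\rangle$ centralized by $H$, and then takes $F$-conjugates of $Z_v$. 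You instead observe that $Z_v$ and $z_w$ both lie in $C_G(H)$ and that the conjugates $z_w^nZ_vz_w^{-n}$ are pairwise distinct, the distinctness following from the same Minasyan-lemma argument (if $z_w^{k}\in N_G(Z_v)=Z_v\times Z_v^{\perp}$ for some $k\neq 0$ then $z_w\in Z_v\times Z_v^{\perp}=C_G(Z_v)$, contradicting non-commutation). Your route is more elementary and self-contained -- it avoids the mapping class group machinery entirely and does not even need the free subgroup, only infinitely many distinct conjugates -- at the modest cost of the extra bookkeeping in your second paragraph identifying $\mathrm{Fix}_{\Gamma^e}(H)$ with the set of cyclic parabolic subgroups of $C_G(H)$ and verifying that $C_G(G_S)=Z(G_S)\times G_{S^{\perp}}$, both of which you justify correctly.
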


\begin{proof}
Let $\mathbb Z_v$ be the cyclic parabolic subgroup of $G$ associated with a vertex $v\in V\Gamma^e$. Let $g_v\in G$ be a generator of $\mathbb Z_v$. Let $\bar v\in\Gamma$ be the type of $\mathbb Z_v$. Note that $\stab_G(v)$ is the centralizer of $\mathbb Z_v$, a parabolic subgroup of type $\st(\bar v)$ (Proposition~\ref{prop:normalizer}).

We first assume that $d_{\Gamma^e}(v,w)\ge 2$, and aim to show that $H=\Stab_G(v)\cap\Stab_G(w)$ fixes infinitely many vertices of $\Gamma^e$. The group $H$ centralizes both $\mathbb Z_v$ and $\mathbb Z_w$, so $H$ centralizes $\langle \mathbb Z_v,\mathbb Z_w\rangle$. By the proof of \cite[Theorem~1.3]{kim2013embedability}, the group $\langle \mathbb Z_v,\mathbb Z_w\rangle$ contains a nonabelian free subgroup $F$, generated by high enough powers of $g_v$ and $g_w$: in fact, Kim and Koberda prove that there exists an injective homomorphism from $G_\Gamma$ into the mapping class group of a finite-type surface, such that $g_v$ and $g_w$ are mapped to powers of Dehn twists about essential simple closed curves that intersect essentially. Therefore $H$ fixes infinitely many vertices of $\Gamma^e$ arising from conjugates of $\mathbb Z_v$ by elements in $F$. 

We now assume that $v$ and $w$ are equal or adjacent. Actually it suffices to consider the case where they are adjacent. Up to conjugation, we can assume without loss of generality that $g_v=\bar v$ and $g_w=\bar w$ (after identifying vertices of $\Gamma$ with the associated standard generators of $G_\Gamma$). Then $H=\Stab_G(v)\cap\Stab_G(w)=G_{\st(\bar v)}\cap G_{\st(\bar w)}=G_{\bar e\circ \bar e^\perp}$, where $\bar e\subset \Gamma$ is the edge joining $\bar v$ to $\bar w$ (the second equality follows from Proposition~\ref{prop:normalizer}, and the last one from Lemma~\ref{lemma:parabolics}(1)). By Proposition~\ref{prop:normalizer}, we have $N_G(H)=H$ (where $N_G(H)$ denotes the normalizer of $H$ in $G$). Let now $u\in V\Gamma^e$ be a vertex such that every element of $H$ has a power that fixes $u$. Notice that if $u$ is fixed by $h^n$ for some $h\in H$ and $n\in\mathbb{N}$, then $h^n$ centralizes $\mathbb Z_u$, and in fact $h$ centralizes $\mathbb Z_u$ (as follows from Lemma~\ref{lemma:setwise vs pointwise}). Therefore $H$ centralizes $\mathbb Z_u$. Hence $\mathbb Z_u\subseteq N_G(H)=H$. It follows that $\mathbb Z_u$ lies in the center of $H$, which is equal to $G_{\Gamma_1}$, where $\Gamma_1$ is the maximal clique factor of $\bar e\circ \bar e^\perp$ \cite{Ser}. Thus vertices of $\Gamma^e$ fixed by $H$ arise from cyclic parabolic subgroups contained in the abelian group $G_{\Gamma_1}$, and there are only finitely many of those -- in fact at most $|V\Gamma_1|$.
\end{proof}

We now move on to the groupoid-theoretic version of our statement.

\begin{lemma}\label{lemma:adjacency-groupoids}
Let $G$ be a non-cyclic transvection-free right-angled Artin group, with underlying graph $\Gamma$. Let $\calg$ be a measured groupoid over a standard finite measure space $Y$, and let $\rho:\calg\to G$ be a strict action-type cocycle. Let $\calh$ and $\calh'$ be two $(\calg,\rho)$-vertex subgroupoids, with parabolic maps $\theta,\theta':Y\to V\Gamma^e$, respectively. Then the following two assertions are equivalent.
\begin{enumerate}
\item For a.e.\ $y\in Y$, the groups $\theta(y)$ and $\theta'(y)$ commute.
\item There exist finitely many $(\calg,\rho)$-vertex subgroupoids $\calg_1,\dots,\calg_k$, such that for every $(\calg,\rho)$-vertex subgroupoid $\calk$, if $\calh\cap\calh'$ is stably contained in $\calk$, then there exists a Borel partition $Y=W_1\dunion \cdots\dunion W_k$ such that for every $\ell\in\{1,\dots,k\}$, the groupoid $\calk_{|W_\ell}$ is stably contained in $(\calg_\ell)_{|W_\ell}$.
\end{enumerate} 
\end{lemma}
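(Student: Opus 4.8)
The plan is to deduce the statement from its group-theoretic counterpart, Lemma~\ref{lem:fix point}, via a dictionary between $(\calg,\rho)$-vertex subgroupoids and vertices of $\Gamma^e$. Recall that, up to a countable Borel partition of $Y$, the subgroupoids $\calh$ and $\calh'$ are the $(\calg,\rho)$-stabilizers of the cyclic parabolic subgroups $\theta(y)$ and $\theta'(y)$, so that on each piece of a common refinement one has $\calh\cap\calh'=\rho^{-1}(\Stab_G(\theta(y))\cap\Stab_G(\theta'(y)))$. For vertices $v,w\in V\Gamma^e$ let $F(v,w)$ be the (possibly infinite) set of all $u\in V\Gamma^e$ with $\Stab_G(v)\cap\Stab_G(w)\subseteq\Stab_G(u)$, and set $F(y)=F(\theta(y),\theta'(y))$; this is a Borel assignment into subsets of the countable set $V\Gamma^e$. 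The key technical step is the following dictionary: \emph{a $(\calg,\rho)$-vertex subgroupoid $\calk$ with parabolic map $\psi$ stably contains $\calh\cap\calh'$ if and only if $\psi(y)\in F(y)$ for a.e.\ $y\in Y$}. One direction is immediate: on a piece where $\theta,\theta',\psi$ are constant, $\psi(y)\in F(y)$ gives the group-level inclusion $\Stab_G(\theta(y))\cap\Stab_G(\theta'(y))\subseteq\Stab_G(\psi(y))$, whence $(\calh\cap\calh')\subseteq\calk$ there. Conversely, stable containment yields $\rho(\calh\cap\calh')\subseteq\Stab_G(\psi(y))$ on such a piece; since $\rho$ is action-type, for every nontrivial $g\in\Stab_G(\theta(y))\cap\Stab_G(\theta'(y))$ the subgroupoid $\rho^{-1}(\langle g\rangle)$ is of infinite type and lies in $\calh\cap\calh'$, which forces a nonzero power of $g$ into $\Stab_G(\psi(y))$ and then $g$ itself into $\Stab_G(\psi(y))$ by Lemma~\ref{lemma:setwise vs pointwise}. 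As $G$ is torsion-free, this gives $\psi(y)\in F(y)$.

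For the implication $1\Rightarrow 2$, assume $\theta(y)$ and $\theta'(y)$ commute for a.e.\ $y$. Then by Lemma~\ref{lem:fix point} the set $F(y)$ is nonempty and of cardinality at most $k:=|V\Gamma|$ for a.e.\ $y$. Fixing once and for all an enumeration of $V\Gamma^e$, I would measurably list $F(y)=\{\sigma_1(y),\dots,\sigma_k(y)\}$ (padding with repetitions when $|F(y)|<k$), producing Borel maps $\sigma_\ell:Y\to V\Gamma^e$ into cyclic parabolics, and let $\calg_\ell$ be the $(\calg,\rho)$-vertex subgroupoid with parabolic map $\sigma_\ell$. Now if $\calk$ is any $(\calg,\rho)$-vertex subgroupoid stably containing $\calh\cap\calh'$, with parabolic map $\psi$, then $\psi(y)\in\{\sigma_1(y),\dots,\sigma_k(y)\}$ a.e.\ by the dictionary; setting $W_\ell=\{y:\ell\text{ is least with }\psi(y)=\sigma_\ell(y)\}$ gives a Borel partition on which $\psi$ and $\sigma_\ell$ agree, so $\calk_{|W_\ell}$ and $(\calg_\ell)_{|W_\ell}$ are stably equivalent, in particular the former is stably contained in the latter. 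This is exactly Assertion~2.

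For $2\Rightarrow 1$, I would prove the contrapositive. Suppose that on a positive-measure set $U$ the vertices $\theta(y),\theta'(y)$ are distinct and non-adjacent; by Lemma~\ref{lem:fix point} the set $F(y)$ is then infinite for a.e.\ $y\in U$. Given any finite family $\calg_1,\dots,\calg_k$ of $(\calg,\rho)$-vertex subgroupoids with parabolic maps $\psi_1,\dots,\psi_k$, I would measurably select $\psi(y)\in F(y)\setminus\{\psi_1(y),\dots,\psi_k(y)\}$ on $U$ (possible since $F(y)$ is infinite), and set $\psi=\theta$ off $U$; the associated $(\calg,\rho)$-vertex subgroupoid $\calk$ stably contains $\calh\cap\calh'$ by the dictionary (using that $\theta,\theta'$ commute off $U$, so $\theta(y)\in F(y)$ there). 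If Assertion~2 held for this family, some $W_\ell\cap U$ would have positive measure with $\calk$ stably contained in $\calg_\ell$ over it, forcing $\psi=\psi_\ell$ on a positive-measure subset of $U$ by Lemma~\ref{lemma:uniqueness}, contradicting the choice of $\psi$. Hence Assertion~2 fails, as desired.

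The geometric content is entirely packaged inside Lemma~\ref{lem:fix point}, so the bulk of the remaining work—and the step I expect to be most delicate—is establishing the dictionary rigorously: specifically, the passage from group-level containment ``up to powers'' to genuine stable containment of groupoids, which rests squarely on the action-type hypothesis and on Lemma~\ref{lemma:setwise vs pointwise}, together with the measurable bookkeeping (Borel dependence of $F(y)$ on the pair $(\theta(y),\theta'(y))$, and Borel enumeration or selection of its elements via a fixed enumeration of $V\Gamma^e$).
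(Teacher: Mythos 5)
Your proposal is correct and follows essentially the same route as the paper's proof: both directions reduce to the group-theoretic Lemma~\ref{lem:fix point} via the correspondence between vertex subgroupoids and their parabolic maps, using the action-type hypothesis (infinite type of $\rho^{-1}(\langle g\rangle)$) together with Lemma~\ref{lemma:setwise vs pointwise} to pass from ``a power of $g$ stabilizes'' to genuine stabilization, and Lemma~\ref{lemma:uniqueness} to rule out coincidences in the converse direction. Your explicit ``dictionary'' is just a cleanly packaged version of the step the paper carries out inline, so the two arguments are the same in substance.
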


\begin{proof}
We first prove that $1\Rightarrow 2$. We can find a conull Borel subset $Y^*\subseteq Y$ and a partition $Y^*=\dunion_{i\in I} Y_i$ into at most countably many Borel subsets such that for every $i\in I$, the maps $\theta_{|Y_i}$ and $\theta'_{|Y_i}$ are constant, with respective values commuting cyclic parabolic subgroups $Z_i$ and $Z'_i$. Let $k=|V\Gamma|$. By Lemma~\ref{lem:fix point}, for every $i\in I$, there exists a finite set $\calz_i=\{Z_{i,1},\dots,Z_{i,k}\}$ of cyclic parabolic subgroups, of cardinality at most $k$ (written possibly with repetitions), such that for every cyclic parabolic subgroup $Z$, if every element of $\Stab_G(Z_{i})\cap\Stab_G(Z'_i)$ has a power that fixes $Z$, then $Z\in\calz_i$. For every $\ell\in\{1,\dots,k\}$, we let $\calg_\ell$ be a measured subgroupoid of $\calg$ such that for every $i\in I$, the groupoid $(\calg_\ell)_{|Y_i}$ is equal to the $(\calg_{|Y_i},\rho)$-stabilizer of $Z_{i,\ell}$.

Let now $\calk\subseteq\calg$ be a $(\calg,\rho)$-vertex subgroupoid, and assume that $\calh\cap\calh'$ is stably contained in $\calk$. Up to replacing $Y^*$ by a conull Borel subset and refining the above Borel partition of $Y^*$, we can assume that for every $i\in I$, the groupoid $\calk_{|Y_i}$ is equal to the $(\calg_{|Y_i},\rho)$-stabilizer of some rank one parabolic subgroup $P_i$. We can further assume that every element of $\Stab_G(Z_i)\cap\Stab_G(Z'_i)$ has a power contained in $\Stab_G(P_i)$: indeed, since $\rho$ is action-type, each element of $\Stab_G(Z_i)\cap\Stab_G(Z'_i)$ has a power which is the $\rho$-image of an element of $(\calh\cap\calh')_{|Y_i}$. It follows from the above that $P_i\in\calz_i$. Assertion~2 follows.

We now prove that $\neg 1\Rightarrow \neg 2$. Assume that there exists a Borel subset $U\subseteq Y$ of positive measure such that the maps $\theta_{|U}$ and $\theta'_{|U}$ are constant, with values two cyclic parabolic subgroups $Z$ and $Z'$ that do not commute. Let $\calg_1,\dots,\calg_k$ be finitely many $(\calg,\rho)$-vertex subgroupoids. Up to restricting to a further Borel subset of $U$ of positive measure, we can assume that for every $\ell\in\{1,\dots,k\}$, the groupoid $(\calg_\ell)_{|U}$ is equal to the $(\calg_{|U},\rho)$-stabilizer of some rank one cyclic parabolic subgroup $P_\ell$. By Lemma~\ref{lem:fix point}, there exists a cyclic parabolic subgroup $P\notin\{P_1,\dots,P_k\}$ such that $\Stab_G(Z)\cap\Stab_G(Z')\subseteq\Stab_G(P)$. Let then $\calk$ be a measured subgroupoid of $\calg$ such that $\calk_{|U}$ is equal to the $(\calg_{|U},\rho)$-stabilizer of $P$, and $\calk_{|Y\setminus U}=\calh_{|Y\setminus U}$. Then $\calh\cap\calh'$ is stably contained in $\calk$; moreover $\calk$ is a $(\calg,\rho)$-vertex subgroupoid. On the other hand, Lemma~\ref{lemma:uniqueness} implies that for every $\ell\in\{1,\dots,k\}$, there is no Borel subset $V\subseteq U$ of positive measure such that $\calk_{|V}$ is contained in $(\calg_\ell)_{|V}$. This shows that assertion~2 fails to hold. 
\end{proof}

\subsection{Conclusion}

In this section, we complete the proof of our main theorem, which is Theorem~\ref{theointro:main} from the introduction (from which, as already explained, Theorem~\ref{theointro:1} follows using Theorem~\ref{theo:rigidity}).

\begin{theo}\label{theo:main}
Let $G_1$ and $G_2$ be two transvection-free right-angled Artin groups with respective defining graphs $\Gamma_1$ and $\Gamma_2$. If $G_1$ and $G_2$ are measure equivalent, then the extension graphs $\Gamma_1^e$ and $\Gamma_2^e$ are isomorphic.
\end{theo}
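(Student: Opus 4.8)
The plan is to run the groupoid-theoretic machinery of the previous sections through \emph{two} cocycles simultaneously and exploit the fact that the key notions involved do not depend on the chosen cocycle. First I would invoke the theorem of Furman, completed by Gaboriau (as recalled in the introduction), to convert the measure equivalence of $G_1$ and $G_2$ into a stable orbit equivalence between essentially free ergodic probability measure-preserving actions. Concretely this produces a single measured groupoid $\calg$ over a standard probability space $Y$, together with two strict cocycles $\rho_1:\calg\to G_1$ and $\rho_2:\calg\to G_2$, both action-type: the only care needed is that one may have to pass to a restriction $\calg_{|Y_0}$, but action-type cocycles restrict to action-type cocycles since restrictions of infinite-type subgroupoids stay of infinite type. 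I would next dispose of the degenerate case. A transvection-free right-angled Artin group is amenable if and only if it is trivial or isomorphic to $\mathbb{Z}$; since amenability is a measure equivalence invariant, if one of the $G_i$ is cyclic or trivial then so is the other, and in that case $\Gamma_1^e$ and $\Gamma_2^e$ are both empty or both a single vertex, hence isomorphic. I may therefore assume both $G_1$ and $G_2$ are non-cyclic, so that Proposition~\ref{prop:raag} and Lemma~\ref{lemma:adjacency-groupoids} apply.

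The crucial point is that being a vertex subgroupoid (Proposition~\ref{prop:raag}) and the adjacency condition~2 of Lemma~\ref{lemma:adjacency-groupoids} are \emph{purely groupoid-theoretic}, making no reference to a cocycle. Thus a subgroupoid of $\calg$ is a $(\calg,\rho_1)$-vertex subgroupoid if and only if it is a $(\calg,\rho_2)$-vertex subgroupoid, and the relation detected by condition~2 is the same whether tested through $\rho_1$ or through $\rho_2$. I use this as follows. For a vertex $a\in V\Gamma_1^e$, corresponding to a cyclic parabolic subgroup $Z_a\subseteq G_1$, the subgroupoid $\calh_a:=\rho_1^{-1}(Z_a\times Z_a^{\perp})=\rho_1^{-1}(\Stab_{G_1}(a))$ satisfies condition~1 of Proposition~\ref{prop:raag} with the trivial partition, hence is a vertex subgroupoid; being intrinsic, it is also a $(\calg,\rho_2)$-vertex subgroupoid, and therefore comes with a parabolic map $\theta_2^{\calh_a}:Y\to V\Gamma_2^e$ taken with respect to $\rho_2$. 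Setting $\Psi(y)(a):=\theta_2^{\calh_a}(y)$ defines a map $V\Gamma_1^e\to V\Gamma_2^e$ for almost every $y$, and symmetrically, starting from $\calk_b:=\rho_2^{-1}(\Stab_{G_2}(b))$ for $b\in V\Gamma_2^e$, a map $\Phi(y):V\Gamma_2^e\to V\Gamma_1^e$. As $V\Gamma_1^e$ and $V\Gamma_2^e$ are countable, the almost-everywhere statements below may all be arranged to hold for almost every $y$ at once.

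I would then verify that $\Psi(y)$ and $\Phi(y)$ are mutually inverse bijections, and that both send edges to edges. For the first, suppose $\Psi(y)(a)=b$ on a positive measure set $U$; then $(\calh_a)_{|U}$ is the $(\calg_{|U},\rho_2)$-stabilizer of $b$, which is exactly $(\calk_b)_{|U}$, so $(\calh_a)_{|U}=(\calk_b)_{|U}$. Since $\calh_a$ has constant $\rho_1$-parabolic map equal to $a$ while $\calk_b$ has some $\rho_1$-parabolic map, Lemma~\ref{lemma:uniqueness} (applied to $\rho_1$) forces that map to equal $a$ on $U$, i.e.\ $\Phi(y)(b)=a$. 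Running this in both directions gives $\Phi(y)\circ\Psi(y)=\mathrm{id}$ and $\Psi(y)\circ\Phi(y)=\mathrm{id}$. For edges, I apply Lemma~\ref{lemma:adjacency-groupoids} to the pair $\calh_a,\calh_{a'}$: if $a,a'$ are adjacent in $\Gamma_1^e$ then condition~1 relative to $\rho_1$ holds (their constant parabolic maps commute), hence the cocycle-free condition~2 holds, hence condition~1 relative to $\rho_2$ holds, which says precisely that $\Psi(y)(a)$ and $\Psi(y)(a')$ commute for almost every $y$; as $\Psi(y)$ is injective they are adjacent. The symmetric statement for $\Phi(y)$ shows that $\Psi(y)^{-1}$ also sends edges to edges. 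A bijection whose inverse sends edges to edges is a graph isomorphism, so for almost every $y$ the map $\Psi(y):\Gamma_1^e\to\Gamma_2^e$ is an isomorphism of graphs; in particular $\Gamma_1^e\cong\Gamma_2^e$, as desired.

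The genuinely difficult content is not in this assembly but in its inputs: Proposition~\ref{prop:raag} and Lemma~\ref{lemma:adjacency-groupoids}, whose whole purpose is precisely to give cocycle-free characterizations. The points requiring attention in the conclusion itself are the clean reduction of measure equivalence to a groupoid carrying two strict action-type cocycles, and the almost-everywhere bookkeeping needed to make bijectivity together with both directions of edge-preservation hold on a single conull set --- which is exactly where countability of the vertex sets of the extension graphs is used.
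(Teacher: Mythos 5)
Your proposal is correct and follows essentially the same route as the paper: reduce via Furman--Gaboriau to a single measured groupoid carrying two strict action-type cocycles, use the cocycle-free characterizations in Proposition~\ref{prop:raag} and Lemma~\ref{lemma:adjacency-groupoids} to transport vertex subgroupoids and adjacency from $\rho_1$ to $\rho_2$, and use Lemma~\ref{lemma:uniqueness} together with countability of the vertex sets to assemble an almost-sure graph isomorphism. The only cosmetic difference is that you package bijectivity as a pair of mutually inverse maps $\Psi(y),\Phi(y)$, whereas the paper proves injectivity and surjectivity of a single map $\theta(\cdot,y)$ separately; the content is identical.
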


 Notice that the only nontrivial transvection-free abelian right-angled Artin group is $\mathbb{Z}$; all the other ones are nonamenable, and therefore not measure equivalent to $\mathbb{Z}$ (see \cite[Corollary~3.2]{Fur-survey}). We will therefore assume without loss of generality that $G_1$ and $G_2$ are not cyclic. Using \cite[Proposition~5.11]{HH} which builds on the seminal work of Furman \cite{Fur2}, to prove Theorem~\ref{theo:main} it suffices to show the following statement for measured groupoids.

\begin{theo}\label{theo:main-2}
Let $G_1$ and $G_2$ be two non-cyclic transvection-free right-angled Artin groups, with respective defining graphs $\Gamma_1$ and $\Gamma_2$. Let $\calg$ be a measured groupoid over a standard finite measure space $Y$ (of positive measure), and assume that for every $i\in\{1,2\}$, there exists a strict action-type cocycle $\calg\to G_i$.

Then the extension graphs $\Gamma_1^e$ and $\Gamma_2^e$ are isomorphic.
\end{theo}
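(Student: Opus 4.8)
The plan is to use the two \emph{intrinsic} characterizations established above---of vertex subgroupoids (Proposition~\ref{prop:raag}) and of adjacency (Lemma~\ref{lemma:adjacency-groupoids})---to manufacture, for almost every $y\in Y$, a graph isomorphism $\phi_y\colon\Gamma_1^e\to\Gamma_2^e$. Producing a single such isomorphism suffices, since $Y$ has positive measure. The key observation is that Assertion~2 of Proposition~\ref{prop:raag} is phrased purely in groupoid-theoretic terms, with no reference to the cocycle or even to the target group; and both $(G_1,\rho_1)$ and $(G_2,\rho_2)$ satisfy the hypotheses of that proposition. Hence a measured subgroupoid $\calh\subseteq\calg$ is a $(\calg,\rho_1)$-vertex subgroupoid if and only if it satisfies Assertion~2, if and only if it is a $(\calg,\rho_2)$-vertex subgroupoid: \emph{being a vertex subgroupoid is an intrinsic notion}, independent of the choice among $\{(G_1,\rho_1),(G_2,\rho_2)\}$. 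Likewise, Assertion~2 of Lemma~\ref{lemma:adjacency-groupoids} is intrinsic, so the property ``the parabolic maps commute almost everywhere'' can be transferred freely between the two cocycles.

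Concretely, for each vertex $v\in V\Gamma_1^e$, corresponding to a cyclic parabolic subgroup $Z_v\subseteq G_1$, the subgroupoid $\calh_v=\rho_1^{-1}(Z_v\times Z_v^{\perp})$ satisfies Assertion~1 of Proposition~\ref{prop:raag} with respect to $\rho_1$ (with the trivial partition), hence is a $(\calg,\rho_1)$-vertex subgroupoid and thus, by the above, a $(\calg,\rho_2)$-vertex subgroupoid. I then define $\phi_y(v)$ to be the value at $y$ of the $\rho_2$-parabolic map of $\calh_v$. As $V\Gamma_1^e$ is countable (because $G_1$ is), all these maps are simultaneously defined on a common conull set. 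Symmetrically, for $w\in V\Gamma_2^e$ I set $\psi_y(w)$ to be the $\rho_1$-parabolic value of $\calk_w=\rho_2^{-1}(\Stab_{G_2}(w))$.

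To see that $\phi_y$ and $\psi_y$ are mutually inverse bijections for a.e.\ $y$, fix $v,w$ and work on the set $E_{v,w}=\{y:\phi_y(v)=w\}$. After refining by a countable Borel partition, $(\calh_v)_{|E_{v,w}}$ equals the $(\calg,\rho_2)$-stabilizer of $w$, i.e.\ $(\calk_w)_{|E_{v,w}}$. Since equal subgroupoids have equal $\rho_1$-parabolic maps---well-definedness being exactly the content of Lemma~\ref{lemma:uniqueness}---and the $\rho_1$-parabolic map of $\calh_v$ is constantly $v$, I obtain $\psi_y(w)=v$ for a.e.\ $y\in E_{v,w}$, that is, $\psi_y(\phi_y(v))=v$. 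Summing over the countably many $w$ (whose sets $E_{v,w}$ cover a conull set) and then over $v$ gives $\psi_y\circ\phi_y=\mathrm{id}$ a.e.; the symmetric argument gives $\phi_y\circ\psi_y=\mathrm{id}$ a.e. For adjacency, if $v,v'$ are adjacent in $\Gamma_1^e$ then $\calh_v,\calh_{v'}$ have commuting constant $\rho_1$-parabolic maps, so Assertion~1 of Lemma~\ref{lemma:adjacency-groupoids} holds for $\rho_1$, hence the intrinsic Assertion~2 holds, hence Assertion~1 holds for $\rho_2$: for a.e.\ $y$ the vertices $\phi_y(v),\phi_y(v')$ commute, and by injectivity of $\phi_y$ they are adjacent. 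Applying this to $\psi_y=\phi_y^{-1}$ shows $\phi_y$ also reflects adjacency. Intersecting the countably many conull sets over all pairs of vertices, I get a conull set of $y$ for which $\phi_y$ is a bijection preserving and reflecting adjacency, i.e.\ a graph isomorphism $\Gamma_1^e\cong\Gamma_2^e$; choosing any such $y$ finishes the argument.

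The genuinely hard content is already isolated in the earlier results: the intrinsic characterizations decouple the combinatorics of $\Gamma_i^e$ from the specific cocycle, and it is precisely this cocycle-independence that lets one dictionary be read in two directions. Relative to the present statement, the main thing to get right is therefore the measurable bookkeeping---arranging that the pointwise maps $\phi_y$ are \emph{simultaneously} bijective and adjacency-preserving on one conull set---which rests on the countability of the vertex sets $V\Gamma_i^e$ and on Lemma~\ref{lemma:uniqueness}, which makes the parabolic maps, and hence $\phi_y$ and $\psi_y$, well defined.
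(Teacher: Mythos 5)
Your proposal is correct and follows essentially the same route as the paper: use the cocycle-free characterizations in Proposition~\ref{prop:raag} and Lemma~\ref{lemma:adjacency-groupoids} to transfer vertex subgroupoids between $\rho_1$ and $\rho_2$, read off a map $V\Gamma_1^e\to V\Gamma_2^e$ from the parabolic maps, and control everything via Lemma~\ref{lemma:uniqueness} and countability of the vertex sets. The only (cosmetic) difference is that you establish bijectivity by exhibiting the explicit inverse $\psi_y$ and checking $\psi_y\circ\phi_y=\mathrm{id}$ and $\phi_y\circ\psi_y=\mathrm{id}$, whereas the paper proves injectivity directly from Lemma~\ref{lemma:uniqueness} and surjectivity from the symmetric consideration of $(\calg,\rho_2)$-stabilizers; these amount to the same argument.
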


\begin{proof}
Let $Z_1$ be a rank one parabolic subgroup of $G_1$ (which also corresponds to a vertex of $\Gamma_1^e$). The $(\calg,\rho_1)$-stabilizer $\calg_{Z_1}$ of $Z_1$ is a $(\calg,\rho_1)$-vertex subgroupoid. In view of Proposition~\ref{prop:raag} -- which characterizes $(\calg,\rho_1)$-vertex subgroupoids in a purely groupoid-theoretic manner, i.e.\ without reference to the cocycle $\rho_1$ -- it follows that $\calg_{Z_1}$ is also a $(\calg,\rho_2)$-vertex subgroupoid. This means that there exist a conull Borel subset $Y^*\subseteq Y$ and a partition $Y^*=\dunion_{i\in I} Y_i$ into at most countably many Borel subsets of positive measure such that for every $i\in I$, the subgroupoid $(\calg_{Z_1})_{|Y_i}$ is equal to the $(\calg_{|Y_i},\rho_2)$-stabilizer of some rank one parabolic subgroup $Z_{2,i}$ of $G_2$ (i.e.\ a vertex of $\Gamma_2^e$), which is unique in view of Lemma~\ref{lemma:uniqueness}. This allows us to define a Borel map $\theta_{Z_1}:Y^*\to V\Gamma_2^e$, with $\theta_{Z_1}(y)=Z_{2,i}$ whenever $y\in Y_i$ -- in other words $\theta_{Z_1}$ is the parabolic map of $(\calg_{Z_1},\rho_2)$. 

By varying $Z_1$ (and up to replacing $Y^*$ by a conull Borel subset), we then get a Borel map $\theta:V\Gamma_1^e\times Y^*\to V\Gamma_2^e$, defined by letting $\theta(Z_1,y)=\theta_{Z_1}(y)$. We now claim that for a.e.\ $y\in Y^*$, the map $\theta(\cdot,y)$ actually determines a graph isomorphism between $\Gamma_1^e$ and $\Gamma_2^e$, which will conclude our proof.

We first prove that for a.e.\ $y\in Y^*$, the map $\theta(\cdot,y)$ is injective. Otherwise, as $V\Gamma_1^e$ is countable, we can find a Borel subset $U\subseteq Y^*$ of positive measure, two distinct rank one parabolic subgroups $Z_1,Z'_1\in V\Gamma^e_1$, and a rank one parabolic subgroup $Z_2\in V\Gamma_2^e$, such that for all $y\in U$, one has $\theta_{Z_1}(y)=\theta_{Z'_1}(y)=Z_2$. This implies that the $(\calg_{|U},\rho_1)$-stabilizers of $Z_1$ and $Z'_1$ are stably equivalent -- as they are both stably equivalent to the $(\calg_{|U},\rho_2)$-stabilizer of $Z_2$. This contradicts Lemma~\ref{lemma:uniqueness}. 

We now prove that for a.e.\ $y\in Y$, the map $\theta(\cdot,y)$ is surjective. Let $Z_2\in V\Gamma^e_2$. Then the $(\calg,\rho_2)$-stabilizer $\calg_{Z_2}$ of $Z_2$ is a $(\calg,\rho_2)$-vertex stabilizer. By Proposition~\ref{prop:raag}, it is also a $(\calg,\rho_1)$-vertex subgroupoid. This means that there exists a conull Borel subset $Y^{**}$ of $Y^*$ and a partition $Y^{**}=\dunion_{j\in J} Y_j$ such that for every $j\in J$, the subgroupoid $(\calg_{Z_2})_{|Y_j}$ is equal to the $(\calg_{|Y_j},\rho_1)$-stabilizer of some rank one parabolic subgroup $Z_{1,j}\in V\Gamma_1^e$. For every $j\in J$ and a.e.\ $y\in Y_j$, we then have $\theta_{Z_{1,j}}(y)=Z_2$. As $V\Gamma_2^e$ is countable, surjectivity holds.

Finally, the fact that for a.e.\ $y\in Y$, the map $\theta(\cdot,y)$ preserves both adjacency and nonadjacency, follows from Lemma~\ref{lemma:adjacency-groupoids} combined with Proposition~\ref{prop:raag} -- which gives a purely groupoid-theoretic characterization of adjacency, with no reference to the cocycles. This completes our proof.
\end{proof}

\subsection{Application to $W^*$-rigidity}\label{sec:von-neumann}

We finish this section by establishing Corollary~\ref{corintro:von-neumann} from the introduction.

\begin{cor}
	\label{cor:von-neumann}
Let $G_1$ and $G_2$ be two right-angled Artin groups with finite outer automorphism groups. Assume that $G_1$ and $G_2$ have free ergodic probability measure-preserving actions on standard probability spaces whose von Neumann algebras have isomorphic amplifications. 

Then $G_1$ and $G_2$ are isomorphic.
\end{cor}

As mentioned in the introduction, if we assumed that the actions were $W^*$-equivalent (without passing to amplifications), then the corollary would also follow from \cite[Corollary~4.2]{HHL}. It would also follow from recent work of Chifan and Kunnawalkam Elayavalli \cite[Theorem~1.3]{CE}. In fact, the statement of \cite[Theorem~1.3]{CE} does not mention amplifications, but the work in \cite[Theorem~3.1]{CE} also deals with virtual Cartan subalgebras. For the convenience of the reader, we are including a proof of Corollary~\ref{corintro:von-neumann}, explaining how it can be deduced from important work of Popa and Vaes \cite{PV1}. 

\begin{proof}
We will prove in the next paragraph of the proof that $G_1$ and $G_2$ satisfy the property~$\mathrm{(HH)}^+$ introduced by Ozawa and Popa in \cite[Definition~1]{OP}. For now, let us explain why this is enough to prove our corollary. Assume that $G_1\actson X$ and $G_2\actson Y$ are two actions as in the statement, whose associated von Neumann algebras have isomorphic amplifications: $(L^\infty(X)\rtimes G_1)^s\simeq (L^\infty(Y)\rtimes G_2)^t$. As $G_1$ and $G_2$ satisfy property~$\mathrm{(HH)}^+$, a theorem of Popa and Vaes \cite[Theorem~1.2 and Remark~1.3]{PV1} implies that $L^\infty(X)\rtimes G_1$ and $L^\infty(Y)\rtimes G_2$ have a unique \emph{virtual Cartan subalgebra} up to unitary conjugacy: more precisely, $L^{\infty}(X)$ is (up to unitary conjugacy) the unique maximal abelian subalgebra of $L^\infty(X)\rtimes G_1$ whose normalizer is a finite index subfactor of $L^\infty(X)\rtimes G_1$ (and likewise for $L^\infty(Y)$ inside $L^\infty(Y)\rtimes G_2$). By \cite[Proposition~4.12]{OP1}, the amplifications of these von Neumann algebras also have a unique virtual Cartan subalgebra (in the above sense) up to unitary conjugacy. The fact that $(L^\infty(X)\rtimes G_1)^s$ and $(L^\infty(Y)\rtimes G_2)^t$ are isomorphic means that there exist positive measure Borel subsets $X'\subseteq X$ and $Y'\subseteq Y$ such that the restrictions to $X'$ and $Y'$ of the equivalence relations $\mathcal{R}_1,\mathcal{R}_2$ associated to the actions $G_1\actson X,G_2\actson Y$ have isomorphic von Neumann algebras: $L((\mathcal{R}_1)_{|X'})\simeq L((\mathcal{R}_2)_{|Y'})$. The above uniqueness statement implies that this isomorphism can be chosen to send the Cartan subalgebra $L^\infty(X')$ to $L^\infty(Y')$. By a theorem of Feldman and Moore \cite{FM}, this in turn implies that the equivalence relations $(\mathcal{R}_1)_{|X'}$ and $(\mathcal{R}_2)_{|Y'}$ are isomorphic, i.e.\ the actions $G_1\actson X$ and $G_2\actson Y$ are stably orbit equivalent. As two countable groups admit stably equivalent free ergodic probability measure-preserving actions if and only if they are measure equivalent (cf. \cite[Theorem 3.3]{Fur2} or \cite[Theorem~2.3]{Gab2}), we deduce that $G_1$ and $G_2$ are measure equivalent, whence isomorphic by Theorem~\ref{theointro:1}.

We now prove that $G_1$ and $G_2$ satisfy property~$\mathrm{(HH)}^+$. As property~$\mathrm{(HH)}^+$ is stable under direct products \cite[Theorem~2.3(2)]{OP}, it is enough to prove that every irreducible nonamenable right-angled Artin group satisfies property $\mathrm{(HH)}^+$. By \cite[Theorem~2.3(5)]{OP}, it is enough to show that for every hyperplane $h$ of the universal cover of the Salvetti complex of $G$, the $G$-stabilizer of $h$ is not co-amenable in $G$. Since the stabilizer of $h$ is conjugate to $G_{\lk(v)}$ for some vertex $v\in V\Gamma$, and since $G_\Gamma$ splits as an amalgamated free product $G_\Gamma=G_{\st(v)}\ast_{G_{\lk(v)}}G_{\Gamma\setminus\{v\}}$ (where the edge group has infinite index in both vertex groups as a consequence of the irreducibility of $G_\Gamma$), the conclusion follows from \cite[Lemma~2.4]{CE}.
\end{proof}

\section{Two sources of failure of superrigidity}

So far, our results have concerned measure equivalence classification within the class of right-angled Artin groups. Given a right-angled Artin group $G_\Gamma$ and a countable group $H$ which is measure equivalent to $G_\Gamma$, it is natural to ask what can be said about the structure of $H$. In particular, motivated by the strong measure equivalence rigidity of mapping class groups \cite{Kid} and some non-right-angled Artin groups \cite{HH}, one naturally wonders whether $H$ is virtually $G_\Gamma$. However, this turns out to always be false.  

\begin{theo}\label{theo:non-rigidity}
For every right-angled Artin group $G$, there exists a continuum of countable groups that are measure equivalent to $G$ but pairwise non commensurable up to finite kernels.
\end{theo}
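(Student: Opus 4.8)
The plan is to exhibit the required family explicitly, using the graph-product flexibility of Corollary~\ref{corintro:graph-products} to guarantee measure equivalence, and cohomological dimension to separate the groups. Write $G=G_\Gamma$. If $\Gamma$ is empty then $G$ is trivial, every group measure equivalent to it is finite, and any two finite groups are commensurable up to finite kernels; so I assume from now on that $\Gamma$ is nonempty. For each integer $m\ge 1$, let $H_m$ be the graph product over $\Gamma$ all of whose vertex groups are isomorphic to $\mathbb{Z}^m$. As $\mathbb{Z}^m$ is a countably infinite amenable group, Corollary~\ref{corintro:graph-products} applies and shows that each $H_m$ is measure equivalent to $G$; in particular all the $H_m$ are measure equivalent to one another.

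The next step is to realize each $H_m$ as a right-angled Artin group and record that it is torsion-free. Writing $\mathbb{Z}^m=G_{K_m}$ for the complete graph $K_m$ on $m$ vertices, the graph product defining $H_m$ is itself a right-angled Artin group $H_m=G_{\Gamma_m}$, where $\Gamma_m$ is the \emph{blow-up} of $\Gamma$ obtained by replacing each vertex $v$ of $\Gamma$ by a clique on $m$ vertices, and joining two of the new vertices by an edge whenever they sit above the same vertex of $\Gamma$ or above two adjacent vertices of $\Gamma$. In particular every $H_m$ is torsion-free.

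I would then separate the $H_m$ using cohomological dimension. Since the Salvetti complex of $H_m=G_{\Gamma_m}$ is a compact $K(H_m,1)$ whose dimension is the clique number $\omega(\Gamma_m)$, one has $\mathrm{cd}(H_m)=\omega(\Gamma_m)$. A maximum clique of $\Gamma_m$ is obtained by choosing a maximum clique of $\Gamma$ and taking all $m$ vertices sitting above each of its vertices, so $\omega(\Gamma_m)=m\,\omega(\Gamma)$. As $\Gamma$ is nonempty, $\omega(\Gamma)\ge 1$, and hence the integers $\mathrm{cd}(H_m)=m\,\omega(\Gamma)$ are pairwise distinct.

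Finally I would conclude. Suppose $H_m$ and $H_{m'}$ were commensurable up to finite kernels, witnessed by finite-index subgroups $K\le H_m$, $K'\le H_{m'}$, finite normal subgroups $N\trianglelefteq K$, $N'\trianglelefteq K'$, and an isomorphism $K/N\cong K'/N'$. As the $H_m$ are torsion-free, $N$ and $N'$ are trivial, so $K\cong K'$. By Serre's theorem, a finite-index subgroup of a torsion-free group of finite cohomological dimension has the same cohomological dimension, whence $\mathrm{cd}(H_m)=\mathrm{cd}(K)=\mathrm{cd}(K')=\mathrm{cd}(H_{m'})$ and thus $m=m'$. Therefore $\{H_m\}_{m\ge 1}$ is the desired infinite family of countable groups measure equivalent to $G$ but pairwise non-commensurable up to finite kernels. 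The only point requiring genuine care is the second step: the identification of $H_m$ with the right-angled Artin group $G_{\Gamma_m}$ and the accompanying clique-number computation; everything else is a direct application of Corollary~\ref{corintro:graph-products} together with standard facts on the cohomological dimension of right-angled Artin groups.
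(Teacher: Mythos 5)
Your proposal is correct and follows essentially the same route as the paper: the paper proves this theorem via graph products over $\Gamma$ with countably infinite amenable vertex groups (Proposition~4.2 and its corollary), and its introduction explicitly suggests taking the vertex groups to be $\mathbb{Z}^m$, exactly as you do. The only difference is that you spell out the pairwise non-commensurability via cohomological dimension (clique number of the blown-up graph, plus Serre's theorem on finite-index subgroups of torsion-free groups), a detail the paper treats as standard and leaves implicit; your handling of the degenerate empty-graph case is also appropriate.
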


We are grateful to a referee for pointing out that our constructions actually yield a continuum of examples, improving the statement we had in an earlier version of this work. In this section, we will present  two sources of examples leading to this phenomenon: graph products of amenable groups, and non-uniform lattices in the automorphism group of the universal cover of the Salvetti complex when $G$ is nonabelian.

\subsection{Graph products of amenable groups}

Given a finite simple graph $\Gamma$ and an assignment of a group $G_v$ to every vertex $v$ of $\Gamma$, we recall that the \emph{graph product} over $\Gamma$ with vertex groups $\{G_v\}_{v\in V\Gamma}$ is the group obtained from the free product of the groups $G_v$ by adding as only extra relations that every element of $G_v$ commutes with every element of $G_w$ whenever $v$ and $w$ are adjacent in $\Gamma$. In particular, a right-angled Artin group is a graph product over its defining graph $\Gamma$, with all vertex groups isomorphic to $\mathbb{Z}$.

\begin{prop}\label{prop:graph-product}
 Let $\Gamma$ be a finite simple graph. Let $G$ and $H$ be two graph products over $\Gamma$, with countable vertex groups $\{G_v\}_{v\in V\Gamma}$ and $\{H_v\}_{v\in V\Gamma}$, respectively. Assume that for every $v\in V\Gamma$, the groups $G_v$ and $H_v$ admit orbit equivalent free measure-preserving actions on standard probability spaces.

Then $G$ and $H$ admit orbit equivalent free measure-preserving actions on standard probability spaces. In particular $G$ and $H$ are measure equivalent.
\end{prop}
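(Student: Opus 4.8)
The plan is to prove the orbit equivalence statement, after which measure equivalence follows immediately from the theorem of Furman and Gaboriau \cite{Fur2,Gab2} recalled in the introduction. The strategy is to exhibit free measure-preserving actions of $G$ and of $H$ on a \emph{common} standard probability space that have the \emph{same} orbit equivalence relation. First I would use the definition of orbit equivalence to normalize the data: for each vertex $v$, after transporting the $H_v$-action through the orbit equivalence isomorphism, I may assume that $G_v$ and $H_v$ act freely on one and the same standard probability space $(X_v,\mu_v)$ with the same orbits, and I write $\mathcal{R}_v$ for their common orbit equivalence relation. The decisive point will be that the orbit equivalence relation of the graph product action can be built in a way that depends only on the family $\{\mathcal{R}_v\}_{v\in V\Gamma}$ and on the graph $\Gamma$, and not on the groups $G_v$ themselves; running this construction with the $G_v$ and then with the $H_v$ will then yield literally the same relation.

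The core of the argument is the construction of a \emph{graph product of the measured equivalence relations} $\mathcal{R}_v$ over $\Gamma$: a free probability-measure-preserving action of $G=G_\Gamma$ on a standard probability space $X$ whose orbit equivalence relation $\mathcal{R}_G$ is assembled canonically from the $\mathcal{R}_v$. I would obtain this by adapting Gaboriau's treatment of free products \cite{Gab-cost,Gab}, generalized so that precisely the commutations prescribed by $\Gamma$ are built in. The two extreme cases guide the construction. When $v$ and $w$ are adjacent, $G_v$ and $G_w$ commute in $G$, so locally the action is a direct product and the corresponding relation on $X_v\times X_w$ is simply $\mathcal{R}_v\times\mathcal{R}_w$; this construction is transparently functorial in the $\mathcal{R}_v$ and sends free actions to free actions. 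When $v$ and $w$ are non-adjacent, $\langle G_v,G_w\rangle=G_v\ast G_w$, and here I would invoke Gaboriau's realization of the free product $\mathcal{R}_v\ast\mathcal{R}_w$ of measured equivalence relations as the orbit relation of a free action of $G_v\ast G_w$, again depending only on the $\mathcal{R}_v$. The graph product interpolates between these, and I would assemble all vertices simultaneously on a single space; one convenient way to organize the bookkeeping is to induct along the standard amalgam decomposition $G_\Gamma=G_{\Gamma\setminus v}\ast_{G_{\lk(v)}}G_{\st(v)}$, in which the amalgamated subgroup $G_{\lk(v)}$ is a direct factor of $G_{\st(v)}=G_v\times G_{\lk(v)}$ and a sub-graph-product of $G_{\Gamma\setminus v}$.

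Granting this construction, the conclusion is immediate. Since the recipe producing $\mathcal{R}_G$ refers only to the relations $\mathcal{R}_v$ and to the graph $\Gamma$, and since $\mathcal{R}_v=\mathcal{R}_{H_v}$ for every $v$, running the same recipe with the $H_v$ in place of the $G_v$ returns the very same orbit equivalence relation, this time realized by a free action of $H=H_\Gamma$. Thus $G\actson X$ and $H\actson X$ are orbit equivalent free measure-preserving actions, and in particular $G$ and $H$ are measure equivalent.

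I expect the main obstacle to be exactly the amalgamation step: arranging the free-product pieces (for non-adjacent vertices) and the commuting pieces (for adjacent vertices) to coexist on a single standard probability space as a genuine free action of $G_\Gamma$ realizing \emph{exactly} the graph-product relations — no fewer, so that non-adjacent vertex groups remain in free position, and no more, so that the action stays free — all while keeping the construction functorial in the $\mathcal{R}_v$ so that the output is manifestly independent of the choice of vertex groups. Verifying freeness and the absence of spurious relations among the non-commuting factors is where Gaboriau's free-product machinery (graphings and treeings, and the additivity of cost, or else a direct normal-form analysis of elements of the graph product) will carry the real weight.
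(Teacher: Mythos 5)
There is a genuine gap: the entire substance of the proposition is concentrated in the construction you defer to --- the ``graph product of the measured equivalence relations $\mathcal{R}_v$ over $\Gamma$'' together with its functoriality in the $\mathcal{R}_v$ --- and this is asserted rather than carried out. Already in the free product case, the free product $\mathcal{R}_1\ast\mathcal{R}_2$ of two relations given on separate spaces is not canonically defined from the relations alone: one must realize them on a common space \emph{in free position}, and the standard way to do so passes through actions of the groups, so the claim that the output ``depends only on the $\mathcal{R}_v$ and on $\Gamma$'' is precisely what needs proof. Your proposed induction along the splitting $G_\Gamma=G_{\Gamma\setminus v}\ast_{G_{\lk(v)}}G_{\st(v)}$ compounds the difficulty: you would need an amalgamated product of equivalence relations over a common subrelation, realized identically inside both factors, and you give no mechanism for arranging this. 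You correctly flag this as ``the main obstacle,'' but resolving it is the proof, not a detail to be checked afterwards.

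The paper avoids the relation-level construction entirely by a surgical replacement argument, one vertex at a time. By induction one may assume $G_w=H_w$ for all $w\neq v$. Take $Y_v$ carrying orbit equivalent free actions of $G_v$ and $H_v$ with $G_v\cdot x=H_v\cdot x$ everywhere, take any free p.m.p.\ action $G\actson Z$, and set $X=Z\times Y_v$ with $G$ acting diagonally, the $Y_v$-coordinate being acted on through the retraction $\psi_v:G\to G_v$. One then \emph{redefines} the action of $H_v$ on $X$ by $h\cdot x:=g\cdot x$, where $g\in G_v$ is the unique element with $g\,p(x)=h\,p(x)$ on the $Y_v$-coordinate. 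The point is that this new $H_v$-action commutes with the (unchanged) action of $G_{\lk(v)}=H_{\lk(v)}$ because $G_{\lk(v)}$ acts trivially on the $Y_v$-coordinate and $g$ commutes with $G_{\lk(v)}$ in $G$; hence the $H_v$-action and the old actions of the other vertex groups assemble into an action of $H$ with literally the same orbits as $G\actson X$. Freeness of the $H$-action is then checked via the normal form for graph products. If you want to salvage your approach, the cleanest fix is to abandon the canonical ``graph product of relations'' and instead adopt this replacement scheme; the only structural input you need from $\Gamma$ is that $G_v$ commutes with $G_{\lk(v)}$ and that $\psi_v$ kills all other vertex groups, which is exactly what makes the twisted $H_v$-action extend to $H$.
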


Notice that the last sentence of the lemma follows from the previous one by \cite[Theorem~2.3]{Gab2}.

\begin{proof}
The proof is an adaptation of an argument of Gaboriau for free products \cite[$\mathbf{P}_{\mathrm{ME}}\mathbf{6}$]{Gab}, specializing ideas that he introduced for graphings in an earlier work \cite[Part~IV]{Gab-cost}. Arguing by induction on the number of vertices of $\Gamma$, we can assume that there exists a unique vertex $v\in V\Gamma$ such that $G_v$ is not isomorphic to $H_v$. Given any vertex $w\neq v$, we fix once and for all an isomorphism between $G_w$ and $H_w$. We will construct orbit equivalent actions of $G$ and $H$.

Recall that every countable group $G$ has a free measure-preserving ergodic action on a standard probability space, by considering the Bernoulli action $G\curvearrowright\{0,1\}^G$, which preserves the probability measure $\left(\frac{1}{2}\delta_0+\frac{1}{2}\delta_1\right)^{\otimes G}$. Let $Y_v$ be a standard probability space equipped with orbit equivalent free measure-preserving actions of $G_v$ and $H_v$. Up to replacing $Y_v$ by an invariant conull Borel subset, we can assume that for every $x\in Y_v$, one has $G_v\cdot x=H_v\cdot x$. Consider also a free measure-preserving action of $G$ on a standard probability space $Z$. Let $X=Z\times Y_v$ (equipped with the product probability measure). Let $\psi_v:G\to G_v$ be the retraction sending all other factors to $\{1\}$. Then we have a $G$-action on $X$, where the action on $Y_v$ is through $\psi_v$. This action is free and measure-preserving. Let $p:X\to Y_v$ be the projection map, which is $G$-equivariant. 

We now construct an action of $H$ on $X$, in the following way. We first define the action of $H_v$, and we will then extend it to the whole group $H$. Let $x\in X$, and let $h\in H_v$. As the $G_v$-action on $Y_v$ is free and orbit equivalent to the $H_v$-action, there is a unique element $g\in G_v$ such that $gp(x)=hp(x)$, and we then let $hx=gx$. We claim that this gives a well-defined action of $H_v$ on $X$. Indeed, let $x\in X$, and let $h_1,h_2\in H_v$. Then there exists a unique element $g_2\in G_v$ such that $h_2p(x)=g_2p(x)$, and a unique element $g_1\in G_v$ such that $h_1(h_2p(x))=g_1(g_2p(x))$, and $g_1g_2$ is then the unique element of $G_v$ such that $(h_1h_2)p(x)=(g_1g_2)p(x)$. This ensures that $(h_1h_2)x=h_1(h_2x)$. Notice also that this action of $H_v$ on $X$ is free (as the action of $H_v$ on $Y_v$ is free).

We now claim that this action of $H_v$ on $X$ commutes with the original action of $H_{\lk(v)}=G_{\lk(v)}$. Indeed, let $x\in X$, let $h\in H_v$ and let $k\in G_{\lk(v)}$. There is a unique element $g\in G_v$ such that $hp(x)=gp(x)$. As the action of $G_{\lk(v)}$ on $Y_v$ is trivial, we also have $hp(kx)=gp(kx)$, and therefore $hkx=gkx=kgx=khx$, as desired (here the first and third equalities come from the definition of the $H_v$-action, and the second comes from the fact that $g$ and $k$ commute in $G$).

The above claim ensures that the $H_v$-action on $X$ defined above extends to an action of $H$, which coincides with the original action of $G_w=H_w$ on each factor with $w\neq v$. Also, by construction, this $H$-action on $X$ is orbit equivalent to the original $G$-action. In particular it is measure-preserving: given $h\in H$ and a Borel subset $A\subseteq X$, the fact that $A$ and $hA$ have the same measure is proved by decomposing $A$ into countably many mutually disjoint Borel subsets $A_i$ so that on $A_i$, the action of $h$ coincides with the action of a given element $g_i\in G$.

We finally claim that this $H$-action on $X$ is free. Indeed, let $h\in H$ and $x\in X$ be such that $hx=x$. Using the graph product structure, we can write $h=h_1\dots h_k$, where for every $i\in\{1,\dots,k\}$, the element $h_i$ is nontrivial and belongs to one of the vertex groups of the graph product, and if $h_i$ and $h_j$ belong to the same vertex group $H_{ij}$ for some $i\neq j$, then there exists $i<k<j$ such that the associated vertex group $H_k$ does not belong to the star of $H_{ij}$. By construction, there exists an element $g=g_1\dots g_k$ satisfying $gx=x$, and such that for every $i\in\{1,\dots,k\}$, the element $g_i$ is nontrivial, and if $h_i\in H_w$, then $g_i\in G_w$ (in fact $h_i=g_i$ whenever $w\neq v$, through our chosen isomorphism between $H_w$ and $G_w$); here the nontriviality of $g_i$ when $h_i\in H_v$ comes from the freeness of the $H_v$-action on $X$. As the $G$-action on $X$ is free, it follows that $g=\mathrm{id}$. The graph product structure thus ensures that $k=0$ (this is essentially a consequence of the normal form theorem in \cite{green1990graph}, see also \cite[Section 3]{hermiller1995algorithms}), and therefore $h=\mathrm{id}$. This completes our proof.
\end{proof}

As a consequence of Proposition~\ref{prop:graph-product} and a theorem of Ornstein and Weiss \cite{OW} saying that any two ergodic measure-preserving free actions of countably infinite amenable groups are orbit equivalent, we reach the following corollary.

\begin{cor}\label{cor:flexibility}
For every finite simple graph $\Gamma$, any two graph products over $\Gamma$ with countably infinite amenable vertex groups are measure equivalent. 

In particular, given any right-angled Artin group $G$, there exist infinitely many pairwise non-commensurable right-angled Artin groups that are measure equivalent to $G$.
\qed 
\end{cor}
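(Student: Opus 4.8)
My plan is to obtain both assertions from Proposition~\ref{prop:graph-product}, the only extra ingredients being the Ornstein--Weiss theorem and an explicit blow-up construction producing right-angled Artin groups. The measure-theoretic content is already packaged in Proposition~\ref{prop:graph-product}, so the corollary is largely a matter of supplying the correct inputs.

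For the first assertion, I would take two graph products $G$ and $H$ over $\Gamma$ with countably infinite amenable vertex groups $\{G_v\}$ and $\{H_v\}$. For each vertex $v$, both $G_v$ and $H_v$ are countably infinite and amenable, so each admits an ergodic free measure-preserving action on a standard probability space, for instance the Bernoulli action. By the Ornstein--Weiss theorem \cite{OW}, any two such actions are orbit equivalent, so in particular $G_v$ and $H_v$ admit orbit equivalent free measure-preserving actions. Proposition~\ref{prop:graph-product} then immediately yields that $G$ and $H$ admit orbit equivalent free measure-preserving actions, hence are measure equivalent.

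For the second assertion, I would fix $G = G_\Gamma$ nontrivial and, for each integer $n\geq 1$, consider the graph product $G^{(n)}$ over $\Gamma$ with every vertex group equal to $\mathbb{Z}^n$. Since $\mathbb{Z}^n = G_{K_n}$ is itself a right-angled Artin group (on the complete graph $K_n$), the group $G^{(n)}$ is again a right-angled Artin group $G_{\Gamma^{(n)}}$, where $\Gamma^{(n)}$ is the blow-up of $\Gamma$ obtained by replacing each vertex with a copy of $K_n$ and joining all vertices of the copies sitting at adjacent vertices of $\Gamma$. As each $\mathbb{Z}^n$ is countably infinite amenable, the first assertion shows that every $G^{(n)}$ is measure equivalent to $G^{(1)} = G$, producing infinitely many right-angled Artin groups measure equivalent to $G$.

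The remaining point, which I expect to be the only place requiring an independent argument, is the pairwise non-commensurability of the $G^{(n)}$. I would argue via cohomological dimension: right-angled Artin groups are torsion-free, so cohomological dimension is a commensurability invariant (a torsion-free group and its finite-index subgroups share the same cohomological dimension), and for a right-angled Artin group it equals the clique number of the defining graph. A maximal clique of $\Gamma^{(n)}$ is exactly the union of the $K_n$-copies over a maximal clique of $\Gamma$, so the clique number of $\Gamma^{(n)}$ equals $n\cdot\omega(\Gamma)$, where $\omega(\Gamma)\geq 1$ denotes the clique number of $\Gamma$. These integers are pairwise distinct, so the groups $G^{(n)}$ have pairwise distinct cohomological dimensions and are therefore pairwise non-commensurable. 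Beyond correctly identifying this invariant, the argument is bookkeeping.
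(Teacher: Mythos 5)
Your proof is correct and follows the same route the paper intends: the first assertion is Ornstein--Weiss combined with Proposition~\ref{prop:graph-product} applied to Bernoulli actions of the vertex groups, and the second is obtained by taking all vertex groups equal to $\mathbb{Z}^n$, exactly as the paper indicates in its introduction. The paper leaves the pairwise non-commensurability of the resulting right-angled Artin groups implicit; your cohomological-dimension argument (the cohomological dimension of $G_\Lambda$ equals the clique number of $\Lambda$, and is a commensurability invariant for torsion-free groups of finite cohomological dimension by Serre's theorem) is a correct and clean way to supply that missing detail.
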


In order to prove Theorem~\ref{theo:non-rigidity}, we are thus left with constructing a continuum of pairwise non-commensurable graph products over $\Gamma$ with amenable vertex groups. The basic idea is to let one of the vertex groups vary, using a continuum of amenable groups.

\begin{proof}[Proof of Theorem~\ref{theo:non-rigidity}]
Let $\Gamma$ be the defining graph of $G$. A theorem of Bartholdi and Erschler \cite{BE} ensures that there exists $\alpha<1$ such that for every $t\in (\alpha,1)$, there exists a finitely generated amenable group $A_t$ whose growth function is equivalent to $n\mapsto e^{n^{t}}$ (for the equivalence relation $\sim$ where $f\sim g$ if $f\preceq g$ and $g\preceq f$, where $f\preceq g$ means that there exists $C>0$ such that for all sufficiently large $n\in\mathbb{N}$, one has $f(n)\le g(Cn)$). Moreover we can assume that $A_t$ is torsion-free \cite[Remark~1.1]{BE}. Let $v\in V\Gamma$ be a vertex, and for every $t\in (\alpha,1)$, let $G_t$ be the graph product over $\Gamma$ such that the vertex group $G_v$ is $A_t$ and all other vertex groups are $\mathbb Z$. Then $G_t$ is torsion-free \cite[Corollary~5.9]{antolin2015tits}. By Corollary~\ref{cor:flexibility}, all groups $G_t$ are measure equivalent, so we are left with proving that they are pairwise not commensurable (hence not commensurable up to finite kernel). 

Given $t\in (\alpha,1)$, we claim that every amenable subgroup $H\subseteq G_t$ is contained in an amenable subgroup $K$ which is either finitely generated and virtually abelian, or equal to the direct product of $A_t$ and a finitely generated virtually abelian subgroup of $G_t$. We prove this by induction on the number of vertices of $\Gamma$. Let $\Gamma=\Gamma_0\circ \Gamma_1\circ\cdots\circ\Gamma_k$ be the de Rham decomposition of $\Gamma$ with $\Gamma_0$ being the clique factor. The claim is trivial if $\Gamma=\Gamma_0$. Now we assume $\Gamma\neq\Gamma_0$. If this decomposition is non-trivial, we know the claim is true for each $G_{\Gamma_i}$ by induction. As $G_{\Gamma}=\oplus_{i=0}^kG_{\Gamma_i}$, the claim follows by considering the projection of $H$ to each $G_{\Gamma_i}$. If the de Rham decomposition has only one factor, then \cite[Theorem~4.1]{antolin2015tits} ensures that either $H$ is virtually cyclic (and we are done), or else $H$ is contained in $G_{\Gamma'}$ for some proper subgraph $\Gamma'$ of $\Gamma$, in which case the claim follows by induction.

Now take $t<t'$ and suppose that $G_{t}$ has a finite index subgroup $G^0_t$ which is isomorphic to a finite index subgroup $G^0_{t'}$ of $G_{t'}$. Let $A^0_{t'}=G^0_{t'}\cap A_{t'}$, whose growth function is also equivalent to $n\mapsto e^{n^{t'}}$. By the previous paragraph, $A^0_{t'}$ can be realized as a subgroup of $H=A_t\times P$ where $P$ is virtually isomorphic to $\mathbb Z^\ell$ for some integer $\ell$. As the growth of $H$ is equivalent to $n^\ell e^{n^{t}}\prec e^{n^{t'}}$ and the map $A^0_{t'}\to H$ is injective and Lipschitz with respect to the word metric, this leads to a contradiction. So $G_t$ is not commensurable to $G_{t'}$.
\end{proof}

\begin{rk}\label{rk:oe-non-conj}
Given a right-angled Artin group $G$, let us explain how the methods used in the present section can be leveraged to build two free, ergodic, measure-preserving actions of $G$ on standard probability spaces which are orbit equivalent but not conjugate.

Let $v\in V\Gamma$ be a vertex in the defining graph of $G$. Let $G\actson Z$ be a Bernoulli action, and let $G_v\actson Y_1$ and $G_v\actson Y_2$ be two Bernoulli actions of the infinite cyclic group $G_v$ whose base spaces have distinct entropies (so these actions are orbit equivalent but not conjugate). Let $\theta$ be the product action of $G$ on $Z\times Y_1$ (equipped with the product probability measure), using the retraction $G\to G_v$ for the action on the second factor. Let $\theta'$ be the action obtained from $\theta$ by replacing $G_v\actson Y_1$ by $G_v\actson Y_2$ as in the proof of Proposition~\ref{prop:graph-product}. More precisely, the orbit equivalence between the $G_v$-actions on $Y_1$ and $Y_2$ yields an isomorphism $\varphi:Y_2\to Y_1$ (which extends to $\varphi:Z\times Y_2\to Z\times Y_1$, extending by the identity on $Z$) and a cocycle $c:G_v\times Y_2\to G_v$. The action $\theta'$ is defined on the standard generators of $G$ as follows: if $g_v$ is the standard generator associated to $v$, then $\theta'(g_v)(x)=\varphi^{-1}(\theta(c(g_v,\pi(x))(\varphi(x)))$, where $\pi:Z\times Y_2\to Y_2$ is the projection; if $g_w$ is a standard generator associated to a vertex $w\neq v$, then $\theta'(g_w)(x)=\varphi^{-1}(\theta(g_w)(\varphi(x)))$. The actions $\theta$ and $\theta'$ are then free and orbit equivalent as in the proof of Proposition~\ref{prop:graph-product}. As $\theta$ is ergodic, we know from the orbit equivalence that $\theta'$ is ergodic.

We claim that these two actions are not conjugate. Indeed, assume by contradiction that $\theta$ and $\theta'$ are conjugate, through a group automorphism $\alpha:G\to G$ and an isomorphism $f:Z\times Y_2\to Z\times Y_1$. Note that if $g\in G$ has trivial projection to $G_v$ (under the natural retraction), then the space of ergodic components for the $\theta$-action $\langle g\rangle \actson Z\times Y_1$ can be identified with $Y_1$. If $g\in G$ has nontrivial projection to $G_v$, then $\langle g\rangle$ acts ergodically on $Z\times Y_1$ (for the action $\theta$).


Take a vertex $w\neq v$. By construction, the action (under $\theta'$) of $G_w$ on $Z\times Y_2$ is not ergodic. As this is conjugate to the action (under $\theta$) of $H=\alpha(G_w)$ on $Z\times Y_1$, the $\theta$-action of $H$ on $Z\times Y_1$ is not ergodic. Hence $H$ has trivial projection to $G_{v}$. By the above description of the spaces of ergodic components of the actions $G_w\actson Z\times Y_2$ (under $\theta'$) and $H\actson Z\times Y_1$ (under $\theta$), we see that there must exist a measure-preserving map $f_1:Z\times Y_2\to Z$ and a measure space isomorphism $f_2:Y_2\to Y_1$ such that for almost every $(z,y)\in Z\times Y_2$, one has $f(z,y)=(f_1(z,y),f_2(y))$. Let $r_v:G\to G_v$ be the retraction sending generators which are not $v$ to identity. As $r_v(\alpha(G_w))=\{1\}$ for every $w\neq v$, and the subgroups $\alpha(G_v)$ together with all other $\alpha(G_w)$ generate $G$, we deduce that $r_v$ induces an isomorphism between $\alpha(G_v)$ and $G_v$. Therefore, the map $f_2$ must conjugate the $G_v$-action on $Y_2$ (under $\theta'$) to the action of $G_v$ (identified with $r_v(\alpha(G_v))$) on $Y_1$. This contradicts the fact that the entropies of the actions $G_v\actson Y_1$ and $G_v\actson Y_2$ are distinct.

As a final remark, let us mention that if we further assume that $\Out(G)$ is finite, then the above argument can be extended to construct a continuum of pairwise nonconjugate actions. A subtle point in the above argument comes from the slight asymmetry in the roles of $\theta$ and $\theta'$: we have a complete description of the cyclic subgroups of $G$ that act ergodically for $\theta$, but not for $\theta'$. When $\Out(G)$ is finite, the group $H$ appearing in the above argument is automatically conjugate to a cyclic group generated by one of the standard generators of $G$. We claim that $H$ acts ergodically on $Z\times Y_2$ if and only if it is conjugate to $G_v$, and otherwise its space of ergodic components is isomorphic to $Y_2$. Indeed, for every vertex $u\in V\Gamma$, the $\theta'$-action of $G_u$ on $Z\times Y_2$ is orbit equivalent (by construction) to the $\theta$-action of $G_u$ on $Z\times Y_1$, so one is ergodic if and only if the other is ergodic. And the action of $G_v$ on $Z\times Y_1$ is ergodic (being a product of two mixing actions), while the action of $G_w$ on $Z\times Y_1$ is not if $w\neq v$, which proves our claim. Arguing as in the previous paragraph and letting $\theta_t$ be the action built from $\theta$ by using a base space with entropy $t$ in place of $Y_2$, we can then deduce that $\theta_t$ and $\theta_{t'}$ are nonconjugate as soon as $t\neq t'$.
\end{rk}

\subsection{Non-uniform lattices acting on Salvetti complexes}

Let $X$ be a locally finite polyhedral complex, and let $\Aut(X)$ be the group of cellular automorphisms of $X$, equipped with the compact-open topology. Then $\Aut(X)$ is a second countable locally compact topological group, see e.g.\ \cite[Example~5.B.11]{cornulier2014metric}. Let $\mu$ be a left invariant Haar measure on $\Aut(X)$. A discrete subgroup $H\subseteq \Aut(X)$ is a \emph{lattice} if the quotient space $H\backslash\Aut(X)$ (with respect to the left translation action of $H$ on $\Aut(X)$) has finite $\mu$-measure. Note that any two lattices in $\Aut(X)$ are measure equivalent (see e.g. \cite[Example~1.2]{Fur}). We will use the following characterization of lattices due to Serre \cite{Serre}, see also \cite[Chapter~1.5]{bass2001tree}.

\begin{theo}[Serre]
	\label{theo:non-uniform lattice}
Let $X$ be a locally finite polyhedral complex with finitely many $\Aut(X)$-orbits of cells. Let $H\subseteq \Aut(X)$ be a discrete subgroup. Let $\mathcal{O}$ be the set of all $H$-orbits of the vertex set of $X$. For each orbit $x\in\mathcal{O}$, let $\alpha_x$ be the cardinality of the $H$-stabilizer of an element in the orbit $x$.
 
Then $H$ is a lattice in $\Aut(X)$ if and only if $\sum_{x\in \mathcal{O}}\frac{1}{\alpha_x}<\infty$.
\end{theo}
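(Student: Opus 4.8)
The plan is to compute the covolume $\mu(H\backslash G)$, where $G=\Aut(X)$, directly in terms of vertex stabilizers by means of a double-coset decomposition. First I would record the topological preliminaries that make this possible. Since $X$ is locally finite, the action of $G$ on $X$ is proper, so the stabilizer $G_\sigma$ of any cell $\sigma$ is a compact subgroup of $G$; moreover $G_\sigma$ is open, because fixing $\sigma$ is an open condition in the compact-open topology (a small neighborhood of $\sigma$ contains no other cell of its dimension, and automorphisms are cellular). In particular every vertex stabilizer $G_v$ is a compact open subgroup, with $0<\mu(G_v)<\infty$. As $H$ is discrete, each $H_v:=H\cap G_v$ is a discrete closed subgroup of the compact group $G_v$, hence finite, so that $\alpha_x=|H_v|<\infty$ for every orbit $x\in\mathcal{O}$ (with $v$ a chosen representative).

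Next I would establish the covolume formula for a single $G$-orbit of vertices. Fix a vertex $v_0$ and let $V_0=Gv_0\cong G/G_{v_0}$ be its orbit; since $G_{v_0}$ is open, $V_0$ is a discrete $G$-set, and the $H$-orbits on $V_0$ are in natural bijection with the double cosets in $H\backslash G/G_{v_0}$. For a double coset $HgG_{v_0}$, the $H$-stabilizer of the vertex $gv_0$ is $H_{gv_0}=H\cap gG_{v_0}g^{-1}$, which is finite and acts freely by left translation on the set $gG_{v_0}$ (of left Haar measure $\mu(gG_{v_0})=\mu(G_{v_0})$), with $gG_{v_0}$ serving as an $H_{gv_0}$-invariant fundamental domain for the left $H$-action on $HgG_{v_0}$. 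Hence this double coset contributes $\mu(G_{v_0})/|H_{gv_0}|=\mu(G_{v_0})/\alpha_x$ to $\mu(H\backslash G)$. Summing over all double cosets yields
\[\mu(H\backslash G)=\mu(G_{v_0})\sum_{x\in H\backslash V_0}\frac{1}{\alpha_x},\]
so $H$ is a lattice if and only if the sum over the single orbit $H\backslash V_0$ is finite.

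It then remains to pass from one $G$-orbit to all of $\mathcal{O}$, which I would do by an edge-counting argument. For a $G$-orbit $E$ of edges joining a vertex orbit $V_a$ to a vertex orbit $V_b$, computing the ``edge mass'' $\sum_{z\in H\backslash E}1/|H_{e_z}|$ by grouping edges around their endpoints — and using that, by $G$-homogeneity, every vertex of $V_b$ meets the same number $n_b\ge 1$ of $E$-edges, and likewise $n_a\ge 1$ for $V_a$ — gives
\[n_b\sum_{x\in H\backslash V_b}\frac{1}{\alpha_x}=\sum_{z\in H\backslash E}\frac{1}{|H_{e_z}|}=n_a\sum_{x\in H\backslash V_a}\frac{1}{\alpha_x}.\]
Thus the per-orbit sums of adjacent vertex orbits are finite exactly together. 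Since $X$ is connected with finitely many $G$-orbits of cells, the quotient graph $G\backslash X^{(1)}$ is a finite connected graph on the $G$-orbits of vertices, so chaining the identity above along a path shows that all per-orbit sums are simultaneously finite or infinite. As $\mathcal{O}$ splits into finitely many blocks according to the $G$-orbits of vertices, $\sum_{x\in\mathcal{O}}1/\alpha_x$ is a finite sum of such per-orbit sums, hence finite iff $\sum_{x\in H\backslash V_0}1/\alpha_x$ is; combined with the covolume formula, this proves the theorem.

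The heart of the argument is the covolume computation of the second step, and I expect the main obstacle to lie in its justification: one must verify that cell stabilizers are compact open and that each finite group $H_{gv_0}$ acts freely on the coset $gG_{v_0}$, so that the quotient measures multiply as claimed. The third step is essentially bookkeeping; the only mild subtlety there is the presence of edges that $H$ inverts, which at worst introduces factors of $2$ in the edge-mass identity and does not affect the finiteness dichotomy.
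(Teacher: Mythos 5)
Your proposal is correct, and it is essentially the standard argument behind the cited result (the paper itself gives no proof, deferring to Serre and to \cite[Chapter~1.5]{bass2001tree}, where the covolume of $H\backslash\Aut(X)$ is computed exactly as you do: compact open vertex stabilizers, the double-coset formula $\mu(H\backslash G)=\mu(G_{v_0})\sum_{x\in H\backslash Gv_0}\alpha_x^{-1}$, and the edge--vertex mass identity to pass between the finitely many $G$-orbits of vertices). Two small points worth recording: connectivity of $X$ is implicitly assumed (it is needed for $\Aut(X)$ to be locally compact and for the quotient graph $G\backslash X^{(1)}$ to be connected, and it holds in the paper's application to $\widetilde{S}_\Gamma$), and in your second step the fundamental domain for $H$ acting on $HgG_{v_0}$ is not $gG_{v_0}$ itself but a Borel fundamental domain for the free $H_{gv_0}$-action on $gG_{v_0}$ -- which is what your measure count $\mu(G_{v_0})/|H_{gv_0}|$ actually uses.
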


We will be interested in the case where $X=\widetilde S_\Gamma$, in which case the above theorem applies (i.e.\ $X$ is a locally finite polyhedral complex with finitely many $\Aut(X)$-orbits of cells).

\begin{prop}\label{prop:lattice}
Let $\Gamma$ be a finite simple graph which is not a complete graph. Then there is a non-uniform lattice $H$ in $\Aut(\widetilde S_\Gamma)$ which is not finitely generated. In particular $H$ is measure equivalent to $G_\Gamma$ but not commensurable to $G_\Gamma$.
\end{prop}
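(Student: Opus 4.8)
The plan is to produce $H$ via Serre's criterion (Theorem~\ref{theo:non-uniform lattice}): it suffices to build a discrete subgroup $H\subseteq\Aut(\widetilde{S}_\Gamma)$ whose vertex-orbit data satisfies $\sum_{x\in\mathcal{O}}1/\alpha_x<\infty$, and to arrange that $H$ has infinitely many vertex orbits with stabilizer orders $\alpha_x$ tending to infinity, so that $H$ is a non-uniform lattice that is not finitely generated. Granting such an $H$, the last sentence is immediate: $G_\Gamma$ acts freely and cocompactly on $\widetilde{S}_\Gamma$, hence is a uniform lattice in $\Aut(\widetilde{S}_\Gamma)$, so $H$ and $G_\Gamma$ are two lattices in the same second countable locally compact group and are therefore measure equivalent (as recalled before Theorem~\ref{theo:non-uniform lattice}, or \cite[Example~1.2]{Fur}); and since finite generation is preserved under passing to finite-index subgroups and to finite quotients, any group commensurable up to finite kernels with the finitely generated group $G_\Gamma$ is finitely generated, so the non-finitely-generated group $H$ cannot be commensurable to $G_\Gamma$.

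To build $H$ I would first extract a tree from $\widetilde{S}_\Gamma$. Since $\Gamma$ is not complete, there is a vertex $v$ with a non-neighbour, and then $G_\Gamma$ splits nontrivially as the amalgam $G_{\st(v)}\ast_{G_{\lk(v)}}G_{\Gamma\setminus v}$. Its Bass--Serre tree $T$ is realized inside $\widetilde{S}_\Gamma$ as the tree dual to the hyperplanes labelled $v$: equally labelled hyperplanes are pairwise disjoint, so the $v$-labelled hyperplanes form a collection of pairwise non-crossing walls whose dual is a tree. Because the indices $[G_{\st(v)}:G_{\lk(v)}]$ and $[G_{\Gamma\setminus v}:G_{\lk(v)}]$ are both infinite, $T$ is an unbounded tree with infinite valences, so $\Aut(T)$ is a non-discrete locally compact group possessing non-uniform lattices that are not finitely generated. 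Following the theory of tree lattices \cite{bass2001tree}, such lattices arise as fundamental groups of infinite graphs of finite groups $\{G_a\}$ over a quotient tree (for instance a ray), with the orders $|G_a|$ chosen so that $\sum_a 1/|G_a|$ converges while the resulting group fails to be finitely generated.

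The crux, and the main obstacle, is to promote such a tree lattice to a \emph{discrete} subgroup of $\Aut(\widetilde{S}_\Gamma)$. The automorphisms preserving the set of $v$-labelled hyperplanes form an open subgroup $\Aut_v\subseteq\Aut(\widetilde{S}_\Gamma)$ of finite index (a label can only be permuted through the finite group of graph automorphisms of $\Gamma$), and a lattice in $\Aut_v$ is automatically a lattice in $\Aut(\widetilde{S}_\Gamma)$; moreover $\Aut_v$ acts on the dual tree $T$. The essential point is that the ``gauge'' kernel of $\Aut_v\to\Aut(T)$ — automorphisms fixing every $v$-hyperplane — is large and compact in nature, so that, unlike for the rigid finite-valence standard subcomplex of type $\{v,w\}$ (whose labelled edges leave no room to realize a general tree automorphism), there is enough flexibility to realize the finite vertex groups of a chosen tree-lattice graph of groups by honest finite-order automorphisms of $\widetilde{S}_\Gamma$ that fix a chamber and permute the infinitely many $v$-hyperplanes attached to it. I would then assemble $H$ from these lifted vertex and edge groups as the Bass--Serre fundamental group of the lifted graph of groups; the delicate bookkeeping lies in checking that the chosen lifts are compatible along the edges of $T$ (the monodromy/gauge matching), which is where most of the work sits.

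Finally I would verify the required properties using Serre's criterion. Discreteness of $H$ holds because its cell stabilizers in $\Aut(\widetilde{S}_\Gamma)$ are finite by construction. For the covolume, the sum $\sum_{x\in\mathcal{O}}1/\alpha_x$ attached to the $H$-action on $\widetilde{S}_\Gamma$ is comparable to the (convergent) covolume of the chosen tree lattice in $\Aut(T)$, multiplied by the finite contribution of a fundamental domain transverse to $T$, so it converges and $H$ is a lattice. Since the underlying quotient graph of groups is infinite, with infinitely many orbit types and stabilizer orders tending to infinity, $H$ has infinitely many vertex orbits (hence is non-uniform) and inherits non-finite-generation from the tree lattice, which completes the construction and the proof.
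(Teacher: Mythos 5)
There is a genuine gap, and in fact the approach as described would fail. Setting aside that the heart of the construction is explicitly deferred (``the delicate bookkeeping\dots is where most of the work sits'' is precisely the step one needs to carry out), the choice of tree is the real problem. Serre's criterion (Theorem~\ref{theo:non-uniform lattice}) requires $\sum_x 1/\alpha_x<\infty$ where the sum runs over $H$-orbits of \emph{vertices of $\widetilde{S}_\Gamma$}, not of vertices of $T$. The tree dual to the $v$-labelled hyperplanes has as vertices the complementary components of those hyperplane carriers, and each such component is an \emph{infinite} subcomplex of $\widetilde{S}_\Gamma$ (it contains a copy of $\widetilde{S}_{\st(v)}$ or $\widetilde{S}_{\Gamma\setminus v}$). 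If $H$ is assembled as the fundamental group of a graph of \emph{finite} groups over this dual tree, then the $H$-stabilizer $H_C$ of each component $C$ is finite, so the vertices of $\widetilde{S}_\Gamma$ lying in $C$ fall into infinitely many $H$-orbits, each contributing at least $1/|H_C|$ to the sum; hence $\sum_x 1/\alpha_x=\infty$ and $H$ is not a lattice in $\Aut(\widetilde{S}_\Gamma)$. (A further issue: this dual tree has infinite valence, so $\Aut(T)$ is not locally compact and ``non-uniform lattice in $\Aut(T)$'' does not literally make sense; one would have to work inside the image of $\Aut(\widetilde{S}_\Gamma)$, which makes the argument circular.)

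The paper's proof avoids both problems by choosing a \emph{locally finite} tree, namely the standard subcomplex $T\subseteq\widetilde{S}_\Gamma$ of type $\{a,b\}$ for two non-adjacent vertices $a,b$, which is a $4$-valent tree. One builds an explicit label-preserving non-uniform, non-finitely generated lattice $H\subseteq\Aut(T)$ as the fundamental group of an infinite graph of finite groups. The key structural step you are missing is the intermediate cover: letting $X\to S_\Gamma$ be the cover corresponding to $\ker(G_\Gamma\to F_{a,b})$, the vertex set of $X$ \emph{coincides} with that of $T$, the $H$-action extends over $X$ because it preserves labels, and the sought lattice $H'\subseteq\Aut(\widetilde{S}_\Gamma)$ is the group of all lifts of $H$ through the covering $\widetilde{S}_\Gamma\to X$. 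Since the deck group acts freely, $H'\backslash\widetilde{S}_\Gamma^{(0)}=H\backslash T^{(0)}$ with the same stabilizer cardinalities, so the convergence of $\sum_x 1/\alpha_x$ on $\widetilde{S}_\Gamma$ reduces exactly to the convergence on $T$; the extra vertices of $\widetilde{S}_\Gamma$ are absorbed by the free deck action rather than inflating the covolume. Your reduction of the final sentence (measure equivalence via two lattices in the same locally compact group, non-commensurability via finite generation) is correct, but the construction of $H$ itself needs to be redone along these lines.
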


\begin{proof}
	Since $\Gamma$ is not a complete graph, there are two vertices $a,b\in V\Gamma$ which are not adjacent. Let $T\subset \widetilde S_\Gamma$ be a standard subcomplex of type $\{a,b\}$. Then $T$ is a 4-valent tree. Recall that edges of $\widetilde S_\Gamma$ are labeled by vertices of $\Gamma$.
	
	We start with a non-uniform lattice $H$ in $\Aut(T)$ whose action on $T$ preserves the labeling of edges. Such $H$ always exists, and we give a concrete example as follows in terms of the graph of group structure on $H\backslash T$, see Figure~\ref{fig:1}.	Each edge of the graph is labeled by $a$ or $b$. Each vertex group and edge group is a direct sum of copies of $\mathbb Z/2\mathbb Z$, and we choose injective homomorphisms from the edge groups to the incident vertex groups. The number at each vertex (resp.\ edge) represents the order of the vertex group (resp.\ edge group). For example, if a vertex is labeled by $8$, that means the vertex group is a direct sum of  3 copies of $\mathbb Z/2\mathbb Z$. The graph is infinite, though only a finite portion of the graph is drawn, the omitted portion of the graph follows a similar pattern.
	\begin{figure}
		\centering
	\includegraphics[scale=0.7]{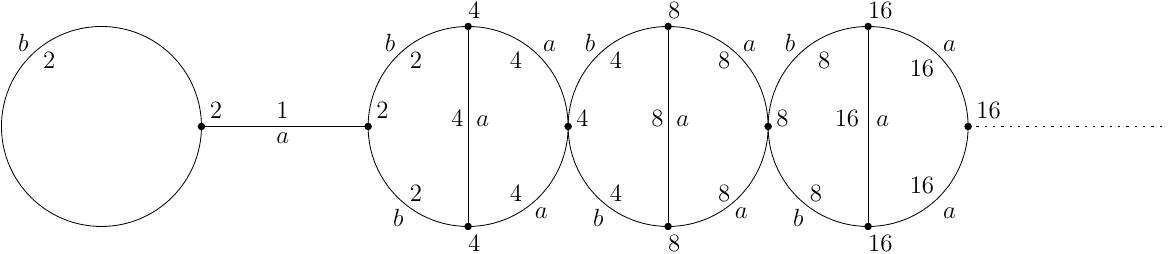}
	\caption{A label-preserving lattice in $\Aut(T)$.}
		\label{fig:1}
    \end{figure}
	One readily checks that the Bass--Serre tree of the graph of groups in Figure~\ref{fig:1} is a 4-valent tree with the standard edge labeling by $a$ and $b$. Moreover, according to Theorem~\ref{theo:non-uniform lattice}, the fundamental group $H$ of this graph of groups is a lattice in $\Aut(T)$. 
	
The next step is to extend the action $H\actson T$ to obtain a lattice in $\Aut(\widetilde S_\Gamma)$. For this purpose, we adjust the argument in \cite[Theorem 9.5]{Hua2}. We first construct an auxiliary complex $X$ as follows. Consider the homomorphism $h:G_\Gamma\to F_{a,b}$ to the free group on $a$ and $b$ defined by sending any generator in $V\Gamma\setminus\{a,b\}$ to the identity element (and sending $a$ to $a$ and $b$ to $b$). Let $X$ be the cover of $S_\Gamma$ corresponding to $\ker h$. We orient each edge of $S_\Gamma$ and equip $X$ with the induced edge labeling and orientation from $S_\Gamma$. Note that the vertex set of $X$ and the vertex set of $T$ can be identified, and $T$ is a subcomplex of $X$. The 1-skeleton $X^{(1)}$ is obtained from $T$ by attaching a collection of circles based at each vertex of $T$ -- one circle for each vertex in  $V\Gamma\setminus\{a,b\}$. The action $H\actson T$ clearly extends to an action $\rho:H\actson X^{(1)}$ such that $\rho$ preserves labels of edges and preserves orientation of edges whose labels are in $V\Gamma\setminus\{a,b\}$. As $\rho$ is label-preserving, it extends to an action $H\actson X$.
	
Let now $H'\subseteq \Aut(\widetilde{S}_\Gamma)$ be the subgroup made of all automorphisms which are lifts of elements of $H$. Letting $\pi:\widetilde{S}_\Gamma\to X$ be the covering map, and $\Aut(\pi)$ the corresponding group of deck transformations, we have a short exact sequence $$1\to\Aut(\pi)\to H'\to H\to 1.$$ We thus have $H'\backslash\widetilde S_\Gamma=H\backslash X$, and in particular there is a 1-1 correspondence between $H'$-orbits of $\widetilde S^{(0)}_\Gamma$ and $H$-orbits of $T^{(0)}$. Moreover, as every element of $H$ fixing a point $x\in X$ has a unique lift fixing a given preimage $\widetilde{x}$ of $x$, the cardinality of vertex stabilizers is preserved under this correspondence. Theorem~\ref{theo:non-uniform lattice} therefore implies that $H'$ is a lattice in $\Aut(\widetilde S_\Gamma)$. As $H$ is a non-uniform lattice acting on a tree, $H$ is not finitely generated, see \cite[Corollary~5.16]{bass2001tree}. Using the above short exact sequence, we deduce that $H'$ is not finitely generated either.
\end{proof}

\begin{rk}
The above proof involves extending the action of a group on a convex subcomplex $Y$ of a $\mathrm{CAT}(0)$ cube complex $X$ to an action of a larger group on $X$. This has been studied in several other contexts, see e.g.\  \cite{haglund2008finite,haglund2008special}.
\end{rk}

\begin{rk}
If one wants to construct further examples of groups that are measure equivalent to a given right-angled Artin group $G$, a natural attempt is to embed $G$ as a lattice in some locally compact second countable topological group $\hat{G}$ and find other lattices in $\hat{G}$. It is therefore natural to try to classify all possible groups $\hat{G}$ in which $G$ embeds as a lattice. Actually, in the case where $\Out(G)$ is finite and $G$ does not split as a product, the compactly generated groups $\hat G$ as above admits a more explicit description as follows. First, by work of Bader, Furman and Sauer \cite[Theorem~1.4 and Theorem~A]{bader2020lattice}, $G$ is cocompact on $\hat G$. Thus $\hat G$ has a quasi-action on $G$. The possible groups $\hat{G}$ are then understood from work of Kleiner and the second named author \cite{KH}: there is a homomorphism with compact kernel and cocompact image from $\hat{G}$ to the automorphism group of a cube complex which is a small ``deformation'' of the universal cover of the Salvetti complex.
\end{rk}

\footnotesize

\bibliographystyle{alpha}
\bibliography{raag-bib}

\begin{flushleft}
Camille Horbez\\ 
Universit\'e Paris-Saclay, CNRS,  Laboratoire de math\'ematiques d'Orsay, 91405, Orsay, France \\
\emph{e-mail:~}\texttt{camille.horbez@universite-paris-saclay.fr}\\[4mm]
\end{flushleft}

\begin{flushleft}
Jingyin Huang\\
Department of Mathematics\\
The Ohio State University, 100 Math Tower\\
231 W 18th Ave, Columbus, OH 43210, U.S.\\
\emph{e-mail:~}\texttt{huang.929@osu.edu}\\
\end{flushleft}

\end{document}